\newtheorem{theorem}{Theorem}[section]
\newtheorem{lemma}[theorem]{Lemma}
\newtheorem{corollary}[theorem]{Corollary}
\theoremstyle{definition}
\newtheorem{definition}[theorem]{Definition}
\theoremstyle{assumption}
\newtheorem{assumption}[theorem]{Assumption}
\theoremstyle{remark}
\newtheorem{remark}[theorem]{Remark}
\numberwithin{equation}{section}
\newenvironment{proofth1}{\medskip\noindent{\bf Proof of Theorem \ref{thmg}.}\enspace}{\hfill \qed  \medskip}
\newenvironment{proofth2}{\medskip\noindent{\bf Proof of Theorem \ref{thm2}:}\enspace}{\hfill \qed  \medskip}
\newenvironment{proofcor1}{\medskip\noindent{\bf Proof of Corollary \ref{cor1}:}\enspace}{\hfill \qed  \medskip}
\newenvironment{proofth210}{\medskip\noindent{\bf Proof of Theorem \ref{thm2.10}.}\enspace}{\hfill \qed  \medskip}
\begin{document}

\author[M.  Liao]{Menglan Liao}
\address{School of Mathematics, Hohai University, Nanjing, 210098, China.}
 \email{liaoml@hhu.edu.cn}

\author[B. Feng]{Baowei Feng}
\address{School of Mathematics, Southwestern University of Finance and Economics, Chengdu 611130,  China. }
 \email{bwfeng@swufe.edu.cn}

\thanks{This work was supported by the National Natural Science Foundation of China (No. 12401290), the Natural Science Foundation of Jiangsu Province (No. BK20230946), the Fundamental Research Funds for the Central Universities (No. B240201090) and the Natural Science
Foundation of Sichuan Province (No. 2026NSFSC0003). }

\title[A piezoelectric beam model]{A piezoelectric beam model with nonlinear  dampings and supercritical sources}

%\date{February 14, 2025}
\subjclass[2020]{35L51; 35L53; 35B35; 35B44.}
\keywords{piezoelectric beam model; supercritical sources; nonlinear
dampings; energy decay rates; blow-up; nonlinear semigroups.}
\begin{abstract} This paper aims to investigate a three-dimensional fully magnetic effected  piezoelectric beam model with strong sources and nonlinear interior dampings. By employing nonlinear semigroups and the theory of monotone operators, the existence of local weak solutions is established. By the potential well method, we obtain the  global existence of potential well solutions. Decay rates of the total energy are obtained in terms of the behavior of the damping terms.  The main advantage in this work is that the stabilization estimate \emph{does not generate lower-order terms}, and in addition we \emph{remove some strong conditions in previous results} to obtain a weaker energy decay. Finally, when the initial total energy is negative, positive but small, respectively, the blow-up results for weak solutions if the source terms are stronger than damping terms are obtained according to the differential inequality technique. Moreover, if interior dampings are linear, a blow-up result with arbitrarily high initial energy is established by the concavity method and an upper bound for the blow-up time is also derived. All results are independent of any relation among  the model coefficients.
\end{abstract}

\maketitle

\section{Introduction}\label{S1}

Piezoelectric materials are multi-functional smart materials and
have been extensively applied in industrial fields, for instance,
ultrasonic welders, micro-sensors, inchworm robots, wearable
human-machine interfaces, based on the remarkable physical property,
that is,  transform successfully electrical energy to mechanics, and
vice versa. The related investigation of mathematical models based
on piezoelectric materials is of great importance.

In modeling piezoelectric systems,  mechanical, electrical, and
magnetic are three major effects and their interrelations need to be
considered.  Due to Maxwell's equations, a single piezoelectric beam
is modeled based on the electro-static assumption or quasi-static
assumption, which neglects the dynamic interactions of
electromagnetism.  In many studies, magnetic effects have been
ignored and only mechanical and electrical effects have been
considered. Mechanical effects are typically modeled using
Kirchhoff, Euler-Bernoulli, or Mindelin-Timoshenko small
displacement assumptions, see \cite{BCW,Han,Sim,Yang}.
 There are three main methods to include electrical and magnetic effects: electrostatic, quasi-static and fully dynamic, see \cite{Tier}. In general, unlike static electricity, the fully dynamic electromagnetic effect has a significant impact on controllability  and
stabilizability.

 A variety of  mathematical models  of piezoelectric structures mainly take mechanical and electrical effects into consideration but  directly neglect magnetic effects. However, the coupling effect of piezoelectric/piezoelectric composite materials can even be one hundred times greater than that of single-phase magneto electric materials in some cases.  In 2013,
Morris and \"{O}zer \cite{M2013}  first proposed the theory of
piezoelectric systems with full magnetic effects by virtue of the
classical variational approach, and derived a PDE dynamic model
described by coupling the elastic equation and charge equation
through piezoelectric constants
\begin{align}
\label{0add1.1}
\begin{cases}
\rho v_{tt}(x,t)-\alpha v_{xx}(x,t)+\gamma\beta p_{xx}(x,t)=F_v(x,t), &(x,t)\in(0,L)\times\mathbb{R}^+,\\
\mu p_{tt}(x,t)-\beta  p_{xx}(x,t)+\gamma\beta v_{xx}(x,t)=F_p(x,t),
&(x,t)\in(0,L)\times\mathbb{R}^+,
\end{cases}
\end{align}
where $v(x, t)$ and $p(x, t)$ denote the transverse displacement of
the beam and the total load of the electric displacement along the
transverse direction  respectively. The coefficients have the
following explanation:
 \begin{equation*}
\begin{split}
&\rho:~\text{the mass density per unit volume};\ \alpha:~\text{elastic stiffness};\ \gamma:~\text{piezoelectric coefficient},\\
&\mu:~\text{magnetic permeability};\ \ \beta:~\text{impermeability
coefficient of the beam},
\end{split}
\end{equation*}
 and all of them are positive constants.  Moreover,
\[\alpha_1=\alpha-\gamma^2\beta>0.\]
They considered the following  boundary and initial  conditions
\begin{align}\label{1.3}
\begin{cases}
v(0,t)=p(0,t)=0, &t\in\mathbb{R}^+,\\
\alpha v_x(L,t)-\gamma\beta p_x(L,t)=F_{b_1}(t), &t\in\mathbb{R}^+,\\
\beta p_x(L,t)-\gamma\beta v_x(L,t)=F_{b_2}(t),  &t\in \mathbb{R}^+,\\
v(x,0)=v_0(x),~v_t(x,0)=v_1(x), &x\in (0,L),\\
~p(x,0)=p_0(x),~p_t(x,0)=p_1(x), &x\in (0,L),
\end{cases}
\end{align}
where
 $F_v(x,t)$ and $F_p(x,t)$ are distributed
mechanical and magnetic controllers, and $F_{b_1}(t)$, $F_{b_2}(t)$
are boundary strain and voltage controllers. For
$F_v(x,t)=F_p(x,t)=0,~F_{b_1}(t)=0,~F_{b_2}(t)=-\frac{V(t)}{h}$,
with $V(t)$ means the voltage happened at the electrodes, for a beam
of length $L$ and thickness $h$, Morris and \"{O}zer \cite{M2013}
proved the well-posedness, and showed that the closed-loop system is
strongly stable in the energy space for a dense set of system
parameters by using only an electrical feedback controller (the
current flowing through the electrodes).  Furthermore, the same
authors \cite{M2014} realized an exponentially stable closed-loop
system for a set of system parameters of zero Lebesgue measure.
\"{O}zer \cite{OZ1} considered  voltage-actuated piezoelectric beams
with magnetic effects and obtained some  stabilization and exact
observability results. For
$F_v(x,t)=F_p(x,t)=0,~F_{b_1}(t)=-\xi_1\frac{v_t(L,t)}{h},~F_{b_2}(t)=-\xi_2\frac{p_t(L,t)}{h}$
with $\xi_1$ and $\xi_2$ are positive constants, Ramos, Freitas, et
al. \cite{R2019} obtained that the exponential stability of the
system is independent of any relation between the coefficients by
exploiting terms of feedback at the boundary,  and consequently
proved their equivalence with the exact observability at the
boundary. For $F_v(x,t)=-\delta
v_t(x,t),~F_p(x,t)=0,~F_{b_1}(t)=F_{b_2}(t)=0$, Ramos, Gon\c{c}alves
and Corr\^ea Neto \cite{R2018} proved that the dissipation produced
by damping $\delta v_t$ is sufficiently strong to stabilize
exponentially the system \eqref{0add1.1} by using energy method
whatever the physical parameters of the model. For $F_v(x,t)=-\delta
v_t(x,t)-f(v),~F_p(x,t)=0,~F_{b_1}(t)=g_1(v_t(L,t)),~F_{b_2}(t)=g_2(p_t(L,t))$,
i.e. the fully-dynamic piezoelectric beam equations with nonlinear
state feedback and nonlinear external sources,  Freitas, \"{O}zer
and Ramos \cite{FO2022} proved that solutions of fully-dynamic
piezoelectric beam equations converge to ones of the electrostatic
(quasi-static) equations as the magnetic permeability coefficient
$\mu\to 0$. Feng and \"{O}zer \cite{FO} studied the
boundary-controlled fully-dynamic piezoelectric beams with various
distributed and boundary delays and proved exponential energy decay
if the delays are small.
 For $F_v(x,t)=-\int_{-\infty}^t g(t-s)v_{xx}(x,s)ds,~F_p(x,t)=0,~F_{b_1}(t)=F_{b_2}(t)=0$,  Dos Santos,  Fortes and
Cardoso \cite{D2022}  proved that the past history term acting on
the longitudinal motion equation is enough to give the exponential
decay of the semigroup associated with the system, independent of
any relation among  the model coefficients. Feng and \"{O}zer
\cite{FO1} established a general energy decay for piezoelectric
beams with long-range memory effects in the boundary.  Zhang, Xu and
Han \cite{Z2022} considered the longtime asymptotic behavior of a
class of fully magnetic effected nonlinear multi-dimensional
piezoelectric beam with viscoelastic infinite memory is considered.
By means of the semigroup theories and Banach fixed-point theorem,
they first proved the well-posedness of the  nonlinear coupled PDEs'
system. And then, the corresponding coupled linear system was
indirectly stabilized exponentially by only one viscoelastic
infinite memory term based on frequency-domain analysis. Moreover,
the exponential decay of the solution to the nonlinear coupled PDEs'
system was established by the energy estimation method. The same
authors also investigated the impact of the presence of only one
friction-type infinite memory term on the stability of the
piezoelectric coupled plate in \cite{Z2023}, that is
$F_v(x,t)=-\xi\int_{-\infty}^t
g(t-s)v_t(x,s)ds,~F_p(x,t)=0,~F_{b_1}(t)=F_{b_2}(t)=0$.  The authors
\cite{Z2023} proved that this system can be indirectly stabilized
polynomially by only one friction-type infinite memory term located
on one of these strongly coupled PDEs based on frequency domain
analysis. In addition,  the optimality of this decay rate was also
further verified by detailed spectral analysis for the system
operator. The long time behavior of the damped piezoelectric system
\eqref{0add1.1} has been investigated extensively by many scholars.
It is beyond this paper to give a comprehensive review, the
interested readers can refer to
\cite{A2023,A2022,A20241,A2024,F2021,F20212,F2022,H2024,K2023,L2025,L20231,OZ2,R2022}
and the references therein.

In this paper, we study the influence of the relationship between
the  sources and damping terms on the behaviors of solutions of the
following   fully magnetic effected  piezoelectric beam model in
three-dimension:
\begin{align}
\label{1.1}
\begin{cases}
\rho v_{tt}(x,t)-\alpha\Delta v(x,t)+\gamma\beta\Delta p(x,t)+g_1(v_t)=f_1(v), &(x,t)\in\Omega\times\mathbb{R}^+,\\
\mu p_{tt}(x,t)-\beta \Delta p(x,t)+\gamma\beta\Delta v(x,t)+g_2(p_t)=f_2(p), &(x,t)\in\Omega\times\mathbb{R}^+,\\
v(x,t)=p(x,t)=0, &(x,t)\in \Gamma_0\times\mathbb{R}^+,\\
\alpha \frac{\partial v}{\partial \vec{n}}(x,t)-\gamma\beta\frac{\partial p}{\partial \vec{n}}(x,t)=\beta \frac{\partial p}{\partial \vec{n}}(x,t)-\gamma\beta\frac{\partial v}{\partial \vec{n}}(x,t)=0,  &(x,t)\in \Gamma_1\times\mathbb{R}^+,\\
v(x,0)=v_0(x),~v_t(x,0)=v_1(x),  &x\in \Omega,\\
p(x,0)=p_0(x),~p_t(x,0)=p_1(x), &x\in \Omega.
\end{cases}
\end{align}
Here $\mathbb{R}^+:=(0,+\infty)$,  $\Omega\subset \mathbb{R}^3$,
which represents the entire volume of the beam,  is a
bounded domain with smooth boundary $\partial\Omega=\Gamma_0\cup
\Gamma_1$  with  $ \Gamma_0\cap\Gamma_1=\emptyset$,
and $\vec{n}$ is the unit outward vector of $\Gamma_1.$ The
nonlinearities $f_1(v)$ and $f_2(p)$ are supercritical interior
sources, as well as the damping functions $g_1$ and $g_2$ vanishing
at the origin are arbitrary continuous and monotone increasing.
Along the transverse direction at any point $x\in \Omega$, one
denotes the transverse displacement of the beam and the total load
of the electric displacement by the unknown functions $v(x, t)$ and
$p(x, t)$, respectively. $(v_0,v_1,p_0,p_1)\in
H_{\Gamma_0}^1(\Omega)\times L^2(\Omega) \times
H_{\Gamma_0}^1(\Omega)\times L^2(\Omega)$  are the specific  initial
data.

 The three-dimensional system \eqref{1.1} could be regarded as a
generalization of the one-dimensional piezoelectric beam model. The 3D magnetically affected piezoelectric beam model is critical
for advancing modern technologies that rely on multifunctional
materials and complex interactions between mechanical, electrical 
and magnetic fields, for instance, advanced energy harvesting, spintronic systems or magnetoelectric sensors, smart materials, etc.
 While the terminology ``beam'' is retained for historical continuity and to emphasize the connection with the extensive literature on piezoelectric beams, the domain $\Omega \subset \mathbb{R}^3$ should be understood as a general bounded piezoelectric medium, not necessarily a slender beam in the geometric sense.
 Since the beam has a cross-section in 3D, so the boundary $\partial\Omega$ include the two end faces and the surrounding surfaces.
 The boundary decomposition $\partial\Omega = \Gamma_0 \cup \Gamma_1$ is a mathematical abstraction that separates two types of boundary control:
\begin{itemize}
    \item $\Gamma_0$: where Dirichlet conditions ($v = p = 0$) are imposed, modeling perfect clamping and electrical grounding.
    \item $\Gamma_1$: where coupled Neumann-type conditions reflect distributed mechanical and electrical controls that incorporate magnetic coupling effects.
\end{itemize}
Physically, this formulation accommodates various configurations:
\begin{enumerate}[$(i)$]
    \item Slender beam-like domains, where $\Gamma_0$ and $\Gamma_1$ may represent portions of the lateral surface with different control strategies.
    \item Piezoelectric plates or shells with surface-mounted electrodes.
    \item Bulk piezoelectric materials with mixed boundary conditions.
\end{enumerate}
We stress that the mathematical structure coupled wave equations with nonlinear
damping and supercritical sources is our primary focus. The analysis
developed here is independent of the specific geometry. This
abstraction allows our results to apply to a broader class of
piezoelectric structures beyond simple one-dimensional beams.
Similar abstract multidimensional generalizations can be found in
recent works on piezoelectric continua \cite{Z2022,Z2023}. {\color{blue}In
\cite{Z2022}, the authors studied} a fully magnetic effected nonlinear
piezoelectric beam with viscoelastic infinite memory of the form
\begin{align*}
\begin{cases}
\rho v_{tt}(x,t)-\alpha\Delta v(x,t)+\gamma\beta\Delta p(x,t)+\displaystyle\int^\infty_0g(s)\Delta v(x,t-s)ds=f_1(v,p), &(x,t)\in\Omega\times\mathbb{R}^+,\\
\mu p_{tt}(x,t)-\beta \Delta p(x,t)+\gamma\beta\Delta
v(x,t)=f_2(v,p), &(x,t)\in\Omega\times\mathbb{R}^+,
\end{cases}
\end{align*}
with the same boundary and initial conditions as \eqref{1.1}. They
proved that the corresponding coupled linear system can be
indirectly stabilized exponentially and then obtained the
exponential decay of  the nonlinear coupled  system  by the energy
estimation method under certain condition. However if the infinite
memory term  is replaced by  friction-type infinite memory term, its
damping effect is too weak to indirectly stabilize the system
exponentially. They proved that the system can be indirectly
stabilized polynomially, see  \cite{Z2023}. But they did not
consider the supercritical sources in the above papers. One can also
find some stability results on 2D fully magnetic effected
piezoelectric beam in \cite{AV1,AV2}.

However, to the best of our knowledge, there is no work discussing
the impact of the interior nonlinear dampings and the strong
nonlinear sources on solutions of piezoelectric beam model. This
paper aims to discuss  the influence of the relationship between the
strong sources and damping terms on the behaviors of solutions of
the strongly coupled hyperbolic equations \eqref{1.1}.

It is well-known that wave equations with subcritical or critical
semi-linear nonlinearities have been extensively discussed in terms
of the standard fixed-point theorems and Galerkin approximations
method. PDEs with higher-order nonlinearities, for instance,
supercritical and super-supercritical sources are more difficult to
discuss the well-posedness, asymptotic behavior of solutions than
subcritical or critical sources. For a single wave equation with
strong nonlinearities, there are many papers such as
\cite{AC2009,BLR,B2009,B20082,B20081,B2010,B20121,CDL,Dom2023,Ga2006,GR2014,GR2017,GRS2018,R2012,R20121,Viti1,Viti2}
on the well-posedness, blow-up results, energy decay of solutions
and so on.  For systems of nonlinear wave equations with strong
nonlinearities, Guo and  Rammaha \cite{G2014} considered a coupled
wave equation with   supercritical  sources and nonlinear dampings.
A nonlinear source and a damping term are also included in the
boundary condition. They obtained the existence of local and global
weak solutions, and uniqueness of weak solutions by using nonlinear
semigroups and the theory of monotone operators. In \cite{G20131},
they  verified that every weak solution blows up in finite time if
the initial energy is negative and the source term surpasses the
damping term. In addition, in \cite{G2013}, they  also  obtained
global existence of a unique weak solution and established
exponential and algebraic uniform decay rates of energy based on the
potential well theory proposed by Sattinger \cite{S1968} and
developed by Liu \cite{L2003}. And they proved a blow-up result for
weak solutions with nonnegative initial energy.

Motivated by the above results, in this paper we study the influence
of the relationship between the  sources and damping terms on the
behaviors of solutions of the strongly coupled problem \eqref{1.1}.
The main novelty and contribution of this paper are listed as
follows.
\begin{enumerate}[$(1)$]
  \item Inspired by \cite{B1993,G2014}, we will use the theory of nonlinear semigroups and monotone operators to discuss the local existence of solutions to problem \eqref{1.1}. The main strategy is to transfer problem \eqref{1.1} to a Cauchy problem of semigroup form and to verify that the semigroup generator is $m$-accretive in an appropriate phase space.
%  The maximal monotonicity and coercivity of a certain nonlinear operator is the biggest challenge.
A suitable choice of the function space and a combination of various
techniques in monotone operator theory is necessary.
  \item By the potential well theory, the global existence of solutions to problem \eqref{1.1} is established.  When both nonlinear dampings exponent $m_1$ and $m_2$ are equal to $1$, the  exponential decay of the total energy is obtained. When at least one of $m_1$ and $m_2$  is not equal to $1$, the polynomial, or logarithmic decay rates of the total energy are established. The main novelties on stability are two aspects: (I) The proof of the stabilization estimate does not generate lower-order terms and hence the standard lengthy compactness-uniqueness argument to absorb the lower-order terms is completely discarded. This leads to the proof of the main result is shorter and more concise. (II) As in
 \cite{G2013,GRS2018}, to get the polynomial decay, the  regularity result that $v\in
L^\infty(\mathbb{R}^+;L^{\frac{3}{2}(m_1-1)}(\Omega))$ if $m_1>5$,
$p\in L^\infty(\mathbb{R}^+;L^{\frac{3}{2}(m_2-1)}(\Omega))$ if
$m_2>5$ must be assumed. Thus, the initial data $v_0,~p_0$ have to
be smoother. In this paper  we remove the strong conditions and
obtain a weaker energy decay, see \eqref{241542} below, which is  a
general energy decay.
  \item When the source terms are stronger than damping terms, the blow-up phenomena of weak solutions are discussed in terms of three cases of the initial data. (I) The initial total energy is negative, which represents that the initial potential energy due to the nonlinear forces is large enough. (II) The initial total energy is positive but small enough,  and the initial quadratic energy is large. (III) When interior dampings are linear, the initial total energy is arbitrarily high.   In general, under high initial energy, the potential well approach is not applicable. This poses a certain challenge to study the blow-up of problem \eqref{1.1}.  To overcome this difficulty, we construct a new auxiliary functional that is related to the Nehari functional in a sense of increasing property. With the help of the Levine concavity theory, we provide a new finite time  criterion for problem \eqref{1.1} if  the initial energy is bounded by a constant multiple of $\rho\int_\Omega v_0v_1dx+\mu\int_\Omega p_0p_1dx$ as the upper bound. We also prove that when the initial energy is bounded by $C_0(\|v_0\|^2_2+\|p_0\|^2_2)$, where $C_0>0$ is a constant, the solution to problem \eqref{1.1} blows up in finite time. In addition, an upper bound for the blow up time is derived.
  \end{enumerate}

The rest of this paper is organized as follows. Section \ref{sec2}
introduces some notations and the main theorems. Section \ref{sec3}
deals with the local existence of solutions to problem \eqref{1.1}
by the nonlinear semigroups and monotone operators. Section
\ref{sec4} is devoted to discussing the global existence of
solutions to problem \eqref{1.1}  by the potential well theory. In
Section \ref{sec5}, the exponential,  polynomial, or logarithmic
decay rates of the total energy are proved in terms of the behavior
of the damping terms. The blow-up results are proved for three
different initial total energy in Section \ref{sec6}.

\section{Main results}\label{sec2}

As is customary, $C$ represents a positive constant which may be
different from line to line. Let us  denote the Lebesgue space
$L^2(\Omega)$ of integrable square functions on $\Omega$ equipped
with the norm denoted by $\|\cdot\|_2$. Define the Sobolev space
$H^k(\Omega),~k=1,2$, of the functions in $L^2(\Omega)$ so that its
derivative of order up to $r\le k$ belongs to $L^2(\Omega)$ in the
distributional sense.  Moreover, define the space
$H_{\Gamma_0}^1(\Omega)$ by
\[H_{\Gamma_0}^1(\Omega):=\{u\in H^1(\Omega);u=0~\text{on }\Gamma_0\}.\]
It is clear that the Poincar\'e's inequality holds for \(u\in H^{1}_{\Gamma_{0}}(\Omega)\), i.e.
\begin{align}
\label{2.1} \|u\|_2\le c\|\nabla u\|_2,\quad \text{for all } u\in
H_{\Gamma_0}^1(\Omega),
\end{align}
for some $c>0$. Further $\|u\|_{H_{\Gamma_0}^1(\Omega)}:=\|\nabla
u\|_2$ is an equivalent norm in $H_{\Gamma_0}^1(\Omega)$. Denote the
dual space of $H_{\Gamma_0}^1(\Omega)$ by $(H_{\Gamma_0}(\Omega))'$.

The following assumptions on sources and damping terms will be
imposed throughout this paper.
\begin{assumption}
\label{ass} For $i=1,2$, we assume that
\begin{enumerate}[$(1)$]
  \item $g_i:\mathbb{R}\to \mathbb{R}$ are continuous and monotone increasing functions with $g_i(0)=0$.  In addition, there exist positive constants $a$ and $b$ such that, for $|s|\ge1$,
\[a|s|^{m_i+1}\le g_i(s)s\le b|s|^{m_i+1},\quad m_i\ge1.\]
  \item $f_1$ and $f_2$ are functions in $C^1(\mathbb{R})$ such that
\[|f_i'(s)|\leq C(|s|^{n_i-1}+1),\quad 1\le n_i< 6.\]
  \item $n_i\frac{m_i+1}{m_i}<6$.
\end{enumerate}
\end{assumption}

Throughout this paper,  denote
\[\hat{c}=\min\{n_1+1,n_2+1\}>2.\]

\begin{definition}[\textbf{Weak solution}]\label{def1}
A pair of functions $(v,p)$ is said to be a weak solution of problem
\eqref{1.1} on $[0,T]$ if $(v,p)\in
C([0,T];H_{\Gamma_0}^1(\Omega))\times
H_{\Gamma_0}^1(\Omega)),~v_t\in C([0,T];L^2(\Omega))\cap
L^{m_1+1}(\Omega\times(0,T)),~p_t\in C([0,T];L^2(\Omega))\cap
L^{m_2+1}(\Omega\times(0,T)),~(v(0),p(0))=(v_0,p_0)\in
H_{\Gamma_0}^1(\Omega))\times
H_{\Gamma_0}^1(\Omega),~(v_t(0),p_t(0))=(v_1,p_1)\in
L^2(\Omega)\times L^2(\Omega)$ and the following identities hold
\begin{align}
\label{171146}
&\rho\int_\Omega v_t(t)\varphi(t)dx-\rho\int_\Omega v_t(0)\varphi(0)dx-\rho\int_0^t\int_\Omega v_{t}(s)\varphi_t(s)dxds+\alpha\int_0^t\int_\Omega \nabla v(s)\nabla\varphi(s)dxds \notag\\
&\quad-\gamma\beta\int_0^t\int_\Omega \nabla
p(s)\nabla\varphi(s)dxds+\int_0^t\int_\Omega
g_1(v_t(s))\varphi(s)dxds=\int_0^t\int_\Omega
f_1(v(s))\varphi(s)dxds,
\end{align}
\begin{align}
\label{171147}
&\mu\int_\Omega p_{t}(t)\psi(t)dx-\mu\int_\Omega p_{t}(0)\psi(0)dx-\mu\int_0^t\int_\Omega p_{t}(s)\psi_t(s)dxds+\beta\int_0^t\int_\Omega \nabla p(s)\nabla\psi(s)dxds\notag\\
&\quad-\gamma\beta\int_0^t\int_\Omega\nabla
v(s)\nabla\psi(s)dxds+\int_0^t\int_\Omega
g_2(p_t(s))\psi(s)dxds=\int_0^t\int_\Omega f_2(p(s))\psi(s)dxds,
\end{align}
for $\varphi\in C([0,T];H_{\Gamma_0}^1(\Omega))\cap
L^{m_1+1}(\Omega\times(0,T))$, $\psi\in
C([0,T];H_{\Gamma_0}^1(\Omega))\cap L^{m_2+1}(\Omega\times(0,T))$
with $\varphi_t,\psi_t\in C([0,T];L^2(\Omega))$.
\end{definition}

The first theorem of this paper establishes the existence of the
local weak solution to problem \eqref{1.1} as follows.
\begin{theorem}[\textbf{Local existence and energy identity}]
\label{thm2.2} Let Assumption $\ref{ass}$ hold, then there exists a
local weak solution $(v,p)$ to problem \eqref{1.1} defined on
$[0,T]$ for some $T>0$ depending on the initial quadratic energy
$E(0)$, where the quadratic energy $E(t)$ is given by
\begin{equation}
 \label{12161914}
E(t):=\frac{1}{2}\left(\rho\|v_t\|_2^2+\mu\|p_t\|_2^2+\alpha_1\|\nabla
v\|_2^2+\beta\|\gamma\nabla v-\nabla p\|_2^2\right).
\end{equation}
In addition, for all $t\in [0,T]$, $(v,p)$ satisfies the following
energy identity
\begin{equation}
\label{12161910} E(t)+\int_0^t\int_\Omega
(g_1(v_t)v_t+g_1(p_t)p_t)dxds=E(0)+\int_0^t\int_\Omega
(f_1(v)v_t+f_2(p)p_t)dxds.
\end{equation}
\end{theorem}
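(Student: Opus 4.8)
The plan is to recast \eqref{1.1} as a first-order Cauchy problem governed by a maximal monotone operator and to combine the theory of $m$-accretive operators with a source truncation followed by a monotone-compactness limit; throughout set $Q_T:=\Omega\times(0,T)$. With $U=(v,v_t,p,p_t)$ I would work in the Hilbert space $\mathcal H:=H_{\Gamma_0}^1(\Omega)\times L^2(\Omega)\times H_{\Gamma_0}^1(\Omega)\times L^2(\Omega)$ carrying the energy inner product
\[
\big\langle(v_1,\tilde v_1,p_1,q_1),(v_2,\tilde v_2,p_2,q_2)\big\rangle_{\mathcal H}=\rho\!\int_\Omega\!\tilde v_1\tilde v_2+\mu\!\int_\Omega\!q_1q_2+\alpha_1\!\int_\Omega\!\nabla v_1\!\cdot\!\nabla v_2+\beta\!\int_\Omega\!(\gamma\nabla v_1-\nabla p_1)\!\cdot\!(\gamma\nabla v_2-\nabla p_2),
\]
so that $\tfrac12\|U\|_{\mathcal H}^2=E(t)$ by \eqref{12161914}; since $\alpha_1>0$, $\beta>0$ and \eqref{2.1} holds, this norm is equivalent to the standard product norm. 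I would set
\[
\mathcal A U=\Big(-\tilde v,\ \tfrac1\rho\big(-\alpha\Delta v+\gamma\beta\Delta p+g_1(\tilde v)\big),\ -q,\ \tfrac1\mu\big(-\beta\Delta p+\gamma\beta\Delta v+g_2(q)\big)\Big)
\]
on the domain $D(\mathcal A)$ of all $U\in\mathcal H$ for which the entries lie in $\mathcal H$, $g_1(\tilde v),g_2(q)\in L^2(\Omega)$, and the coupled Neumann conditions of \eqref{1.1} hold on $\Gamma_1$; and $\mathcal F(U)=\big(0,\tfrac1\rho f_1(v),0,\tfrac1\mu f_2(p)\big)$, so that \eqref{1.1} is $U'+\mathcal A U\ni\mathcal F(U)$, $U(0)=(v_0,v_1,p_0,p_1)$.

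Next I would show $\mathcal A$ is $m$-accretive. For monotonicity, expand $\langle\mathcal A U_1-\mathcal A U_2,U_1-U_2\rangle_{\mathcal H}$, integrate the Laplacians by parts, use \emph{both} boundary relations on $\Gamma_1$ to cancel the boundary integrals, and observe that $\alpha_1=\alpha-\gamma^2\beta$ forces every elastic cross-term to vanish, leaving $\int_\Omega(g_1(\tilde v_1)-g_1(\tilde v_2))(\tilde v_1-\tilde v_2)+\int_\Omega(g_2(q_1)-g_2(q_2))(q_1-q_2)\ge0$ by Assumption \ref{ass}(1); since $g_i(0)=0$ one also gets $\mathcal A0=0$ and $\langle\mathcal A U,U\rangle_{\mathcal H}=\int_\Omega(g_1(v_t)v_t+g_2(p_t)p_t)\,dx$. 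For maximality, solve $(I+\mathcal A)U=F$: eliminating $\tilde v=v-f_1$, $q=p-f_3$ reduces it to the elliptic system $\rho v-\alpha\Delta v+\gamma\beta\Delta p+g_1(v-f_1)=\rho(f_1+f_2)$, $\mu p-\beta\Delta p+\gamma\beta\Delta v+g_2(p-f_3)=\mu(f_3+f_4)$ with the mixed Dirichlet/Neumann conditions, whose principal bilinear form $\rho\!\int v\phi+\mu\!\int p\psi+\alpha\!\int\nabla v\!\cdot\!\nabla\phi+\beta\!\int\nabla p\!\cdot\!\nabla\psi-\gamma\beta\!\int(\nabla p\!\cdot\!\nabla\phi+\nabla v\!\cdot\!\nabla\psi)$ is bounded and, again because $\alpha_1>0$, coercive on $\big(H_{\Gamma_0}^1(\Omega)\big)^2$, while $(g_1(\cdot-f_1),g_2(\cdot-f_3))$ is maximal monotone; the sum is coercive maximal monotone, hence onto. (When $m_i>5$ the damping is not $(H_{\Gamma_0}^1)'$-valued in general, so I would first replace $g_i$ by its Yosida approximation $g_i^n$, solve by Minty--Browder, derive uniform $H^1$- and $\int g_i^n(v^n-f)(v^n-f)$-bounds by testing with $(v^n-f_1,p^n-f_3)$, and pass to the limit.) Thus $\mathcal A$ is $m$-accretive with $\overline{D(\mathcal A)}=\mathcal H$.

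Under Assumption \ref{ass} with $3<n_i<6$ the source $\mathcal F$ fails to be locally Lipschitz $\mathcal H\to\mathcal H$, so I would truncate $f_i$ outside $[-K,K]$ to a globally Lipschitz $f_i^K$ with $|f_i^K(s)|\le C(1+|s|^{n_i})$ uniformly in $K$. Then $\mathcal F^K:\mathcal H\to\mathcal H$ is globally Lipschitz, and the nonlinear semigroup theory for Lipschitz perturbations of $m$-accretive operators yields a unique global mild solution $U^K\in C([0,\infty);\mathcal H)$, which — approximating the data in $D(\mathcal A)$ — satisfies the energy identity \eqref{12161910} with $f_i$ replaced by $f_i^K$. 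To control $U^K$ on a $K$-independent interval, Hölder gives $\int_\Omega f_i^K(v^K)v_t^K\,dx\le\|f_i(v^K)\|_{(m_i+1)/m_i}\|v_t^K\|_{m_i+1}$; by Assumption \ref{ass}(3), $n_i\frac{m_i+1}{m_i}<6$, together with $H_{\Gamma_0}^1(\Omega)\hookrightarrow L^6(\Omega)$, $\|f_i(v^K)\|_{(m_i+1)/m_i}\le C\big(\|\nabla v^K\|_2^{n_i}+1\big)\le C\big(E^K(t)^{n_i/2}+1\big)$, while Assumption \ref{ass}(1) gives $\|v_t^K\|_{m_i+1}^{m_i+1}\le\tfrac1a\int_\Omega g_i(v_t^K)v_t^K\,dx+|\Omega|$. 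Young's inequality absorbs the dissipation into the left side of the truncated energy identity and leaves $E^K(t)\le E(0)+C\int_0^t\big(E^K(s)^{\theta}+1\big)\,ds$ with $\theta:=\max\{1,\max_i\tfrac{n_i(m_i+1)}{2m_i}\}<3$, so a comparison ODE produces $T=T(E(0))>0$ and $R=R(E(0))$ with $\sup_{[0,T]}\|U^K\|_{\mathcal H}\le R$ and $\int_0^T\!\!\int_\Omega\big(g_1(v_t^K)v_t^K+g_2(p_t^K)p_t^K\big)+\sum_i\int_0^T\!\!\int_\Omega|f_i(v^K)|^{(m_i+1)/m_i}\le R$, uniformly in $K$; in particular $v_t^K\in L^{m_1+1}(Q_T)$ and $p_t^K\in L^{m_2+1}(Q_T)$.

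From these bounds I would extract $v^K\rightharpoonup v$, $p^K\rightharpoonup p$ weak-$*$ in $L^\infty(0,T;H_{\Gamma_0}^1(\Omega))$, $v_t^K\rightharpoonup v_t$, $p_t^K\rightharpoonup p_t$ weak-$*$ in $L^\infty(0,T;L^2(\Omega))$ and weakly in $L^{m_1+1}(Q_T)$, $L^{m_2+1}(Q_T)$, and, by Aubin--Lions--Simon, $v^K\to v$, $p^K\to p$ strongly in $C([0,T];L^2(\Omega))$ and a.e.\ in $Q_T$; hence $f_i^K(v^K)\to f_i(v)$ a.e., and the uniform $L^{(m_i+1)/m_i}(Q_T)$-bound with $n_i\frac{m_i+1}{m_i}<6$ gives, via Vitali's theorem, $f_i^K(v^K)\to f_i(v)$ in $L^1(Q_T)$. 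Passing to the limit in the time-integrated, tested, truncated equations yields \eqref{171146}--\eqref{171147} with the weak $L^{(m_i+1)/m_i}(Q_T)$-limits $\chi_1,\chi_2$ of $g_1(v_t^K),g_2(p_t^K)$ in place of $g_1(v_t),g_2(p_t)$; the identification $\chi_1=g_1(v_t)$, $\chi_2=g_2(p_t)$ follows from Minty's monotonicity trick, the needed inequality $\limsup_K\int_{Q_T}g_i(v_t^K)v_t^K\le\int_{Q_T}\chi_i\,v_t$ being a consequence of the truncated energy identity and weak lower semicontinuity of the quadratic terms. Finally the energy \emph{identity} \eqref{12161910} for $(v,p)$ is obtained by passing to the limit in the truncated one — lower semicontinuity gives ``$\le$'', the reverse inequality coming from the same Minty argument (equivalently, by testing the limit equation against Steklov averages of $v_t,p_t$, legitimate precisely because $n_i\frac{m_i+1}{m_i}<6$ forces $f_1(v)v_t,\,f_2(p)p_t\in L^1(Q_T)$). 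The decisive and most delicate step is exactly this last one: identifying the weak limits of the \emph{nonlinear} dampings while upgrading the limiting energy relation from inequality to equality looks circular, since $v_t,p_t$ are not admissible test functions; Assumption \ref{ass}(3) is what makes the time-regularization of the weak formulation and the passage to the limit in the dissipation term mutually consistent, thereby closing the argument.
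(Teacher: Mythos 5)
Your proposal is correct and follows essentially the same strategy as the paper: recast \eqref{1.1} as an abstract Cauchy problem in the energy space, establish $m$-accretivity via the theory of monotone operators, truncate the supercritical sources, derive a $K$-independent local-in-time energy bound from Assumption \ref{ass}(1),(3) together with $H^1_{\Gamma_0}(\Omega)\hookrightarrow L^6(\Omega)$, and pass to the limit by Aubin--Lions compactness plus Minty's trick for the nonlinear dampings. The differences are organizational rather than substantive: you keep the source outside the operator as a globally Lipschitz perturbation of an $m$-accretive $\mathcal A$, whereas the paper absorbs $-f_i$ into $\mathscr A$ and shows $\mathscr A+\omega\mathcal I$ is $m$-accretive (Lemma \ref{lem3.1}); you use a single pointwise cutoff of $f_i$, whereas the paper uses a two-stage truncation (the radial truncation of Lemma \ref{lem3.2}, which lets the truncated solution coincide exactly with the true one on $[0,T]$, followed by the pointwise cutoffs $f_i^n$ and the uniformity statement of Lemma \ref{lem3.3}); and for the energy identity you invoke Minty/Steklov averaging where the paper uses difference quotients in time, which amounts to the same regularization. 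One small imprecision: your domain condition $g_i(\tilde v),g_i(q)\in L^2(\Omega)$ is too strong (it already fails for $m_i>3$ when $\tilde v\in H^1_{\Gamma_0}(\Omega)$ is merely in $L^6$); the correct requirement, used in the paper, is $g_i\in (H_{\Gamma_0}^1(\Omega))'\cap L^1(\Omega)$, with maximal monotonicity of the damping obtained from Barbu's subdifferential characterization rather than via Yosida regularization — but your Yosida route would also close this step, so this is a fixable detail rather than a gap.
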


\begin{remark}
For the sake of clarity and conciseness, in what follows, we will
study the global existence, decay estimates of the total energy and
the blow-up results of solutions for problem \eqref{1.1} in a
particular case that the nonlinear sources $f_1(v)=|v|^{n_1-1}v$ and
$f_2(p)=|p|^{n_2-1}p$, as well as the nonlinear dampings
$g_1(v_t)=|v_t|^{m_1-1}v_t$ and $g_2(p_t)=|p_t|^{m_2-1}p_t$.
\end{remark}

In order to derive the global existence of solutions,
let us  introduce the concepts of Nehari manifold and potential
well. For space $\mathcal{V}:=H_{\Gamma_0}^{1}(\Omega)\times
H_{\Gamma_0}^{1}(\Omega)$, define the nonlinear functional
$\mathcal{J}:\mathcal{V}\to \mathbb{R}$ by
\begin{align}\label{17-1}
\mathcal{J}(v,p):=\frac{1}{2}\left(\alpha_1\|\nabla
v\|_2^2+\beta\|\gamma\nabla v-\nabla p\|_2^2\right)
-\frac{1}{n_1+1}\|v\|_{n_1+1}^{n_1+1}-\frac{1}{n_2+1}\|p\|_{n_2+1}^{n_2+1}.
\end{align}
Then,  the \emph{potential energy} of problem \eqref{1.1} is shown
by $\mathcal J(v,p)$. The Fr\'{e}chet derivative of $\mathcal{J}$ at
$(v,p)\in \mathcal{V}$ is shown by
\begin{align}\label{17-2}
\langle\mathcal{J}'(v,p),(\bar{v},\bar{p})\rangle
&=\int_\Omega \alpha_1\nabla v\nabla \bar{v}dx+\int_\Omega\beta(\gamma\nabla v-\nabla p)(\gamma\nabla \bar{v}-\nabla \bar{p})dx\notag\\
&\quad-{\color{blue}\int_\Omega |v|^{n_1-1}v\bar{v} dx-\int_\Omega |p|^{n_2-1}p
\bar{p}dx},
\end{align}
for $(\bar{v},\bar{p})\in \mathcal{V}$.  The \emph{kinetic energy}
of problem \eqref{1.1} is defined by
$\frac{1}{2}(\|v_t(t)\|^2_2+\|p_t(t)\|^2_2)$. For $v\in
L^{n_1+1}(\Omega)$ and $p\in L^{n_2+1}(\Omega)$, it is natural to
define the total energy $\mathcal{E}(t)$ by the summation of the
kinetic energy and the potential energy, that is,
\begin{align}\label{231213}
&\mathcal{E}(t):=   \frac{1}{2}(\rho\|v_t\|^2_2+\mu\|p_t\|^2_2)+\mathcal{J}(u(t),p(t))\notag\\
&\quad= \frac{1}{2}\left(\rho\|v_t\|^2_2+\mu\|p_t\|^2_2+\alpha_1\|\nabla v\|_2^2+\beta\|\gamma\nabla v-\nabla p\|_2^2\right)-\frac{1}{n_1+1}\|v\|_{n_1+1}^{n_1+1}-\frac{1}{n_2+1}\|p\|_{n_2+1}^{n_2+1}\notag\\
&\quad=
E(t)-\frac{1}{n_1+1}\|v\|_{n_1+1}^{n_1+1}-\frac{1}{n_2+1}\|p\|_{n_2+1}^{n_2+1}.
\end{align}
Therefore, the energy identity \eqref{12161910} is equivalent to
\begin{align} \label{231218}
\mathcal{E}(t)+\int_0^t(\|v_t\|_{m_1+1}^{m_1+1}+\|p_t\|_{m_2+1}^{m_2+1})ds=\mathcal{E}(0).
\end{align}
Taking the derivative with respect to $t$ arrives at
\begin{align}   \label{231219}
\mathcal{E}'(t)   + \|v_t\|_{m_1+1}^{m_1+1}+\|p_t\|_{m_2+1}^{m_2+1}
= 0,
\end{align}
which gives
\begin{align}   \label{231220}
\mathcal E'(t) \leq 0.
\end{align}
Therefore, $\mathcal{E}(t)$ is non-increasing with respect to time
$t$.

The \emph{Nehari manifold} associated with the functional $\mathcal
J$ is defined as
\begin{align}
\label{17-3} \mathcal{N}:=\left\{(v,p)\in
\mathcal{V}\backslash\{(0,0)\}:
\langle\mathcal{J}'(v,p),(v,p)\rangle=0\right\}.
\end{align}
Let us define the \emph{potential well} associated with the
potential energy $\mathcal{J}(v,p)$ by
\begin{align}    \label{17-5}
\mathcal{W}&:=\{(v,p)\in \mathcal{V}: \mathcal{J}(v,p)<d\},
\end{align}
where the \emph{depth of the potential well} $\mathcal{W}$ is
defined as
\begin{align}    \label{17-6}
d:=\inf_{(v,p)\in\mathcal{N}}\mathcal{J}(v,p).
\end{align}
It is clear from \eqref{17-5} and \eqref{17-6} that the potential
well $\mathcal W$ and the Nehari manifold $\mathcal N$ are disjoint
sets.   We will verify that $d$ is strictly positive (see Lemma
\ref{lem4-1}).  In fact, the depth of the potential well $d$ is
consistent with the mountain pass level, more precisely,
\begin{align}
\label{171711}
d:=\inf_{(v,p)\in\mathcal{N}}\mathcal{J}(v,p)=\inf_{(v,p)\in
\mathcal{V}\backslash\{(0,0)\}}\sup_{\lambda\geq0}\mathcal{J}(\lambda
(v,p)).
\end{align}
\eqref{171711} will be justified in Lemma \ref{1221}.

The potential well $\mathcal W$ can be decomposed into the following
two subparts:
\begin{align*}
\mathcal{W}_1&=\left\{(v,p)\in\mathcal{W}:\alpha_1\|\nabla v\|_2^2+\beta\|\gamma\nabla v-\nabla p\|_2^2>\|v\|_{n_1+1}^{n_1+1}+\|p\|_{n_2+1}^{n_2+1}\right\} \cup\{(0,0)\},\\
\mathcal{W}_2&=\left\{(v,p)\in\mathcal{W}:\alpha_1\|\nabla
v\|_2^2+\beta\|\gamma\nabla v-\nabla
p\|_2^2<\|v\|_{n_1+1}^{n_1+1}+\|p\|_{n_2+1}^{n_2+1}\right\}.
\end{align*}
Clearly, $\mathcal{W}_1\cap\mathcal{W}_2=\emptyset$ and
$\mathcal{W}_1\cup\mathcal{W}_2=\mathcal{W}.$

The following two theorems show that if the initial data
$(v_0,p_0)\in \mathcal W_1$ and the initial total energy $\mathcal
E(0)<d$, then the weak solution of problem \eqref{1.1} is global in
time. Also, we prove the uniform decay rate of energy under
additional assumptions.

\begin{theorem}[\textbf{Global existence}]\label{thmg}
Let $1<n_1,n_2\leq 5$ and Assumption $\ref{ass}(3)$ hold. Suppose
\[(v_0,p_0)\in\mathcal{W}_1,~\mathcal{E}(0)<d,\]
 then problem
\eqref{1.1} admits a global weak solution $(v,p)$. In addition,  for
any $t\geq 0$, the potential energy $\mathcal{J}(v,p)$, the total
energy $ \mathcal{E}(t)$ and the quadratic energy $E(t)$ fulfill
\begin{enumerate}[$(i)$]
  \item $\mathcal{J}(v,p)\leq
\mathcal{E}(t)\leq\mathcal{E}(0)<d,$
  \item $(v,p)\in\mathcal{W}_1,$
  \item $E(t) < \frac{\hat{c}d}{\hat{c}-2},$
  \item $\frac{\hat{c}-2}{\hat{c}}E(t)\leq\mathcal{E}(t)\leq E(t).$
\end{enumerate}
\end{theorem}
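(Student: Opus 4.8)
The plan is to extend the local weak solution furnished by Theorem \ref{thm2.2} to a global one by first proving that the set $\mathcal{W}_1$ is invariant along the flow, and then exploiting the resulting time–uniform bound on the quadratic energy $E(t)$ to iterate the local existence result. A standing remark is that, since $1<n_i\le 5$, the embeddings $H^1_{\Gamma_0}(\Omega)\hookrightarrow L^{n_i+1}(\Omega)$ are continuous (this is where $n_i\le 5$ is used), so that for a weak solution, which lies in $C([0,T];H^1_{\Gamma_0}(\Omega)\times H^1_{\Gamma_0}(\Omega))$, the maps $t\mapsto\mathcal{J}(v(t),p(t))$, $\mathcal{E}(t)$, $E(t)$ and $t\mapsto I(t):=\alpha_1\|\nabla v\|_2^2+\beta\|\gamma\nabla v-\nabla p\|_2^2-\|v\|_{n_1+1}^{n_1+1}-\|p\|_{n_2+1}^{n_2+1}$ are continuous; in particular $\mathcal{J}$ is continuous on $\mathcal{V}$, so $\mathcal{W}$ and $\mathcal{W}_2=\mathcal{W}\cap\{I<0\}$ are open in $\mathcal{V}$.

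\emph{Invariance of $\mathcal{W}_1$.} Let $[0,T_{\max})$ be the maximal existence interval. From the energy identity \eqref{231218} and the non-negativity of the dissipation integral, $\mathcal{E}(t)\le\mathcal{E}(0)<d$; since the kinetic energy is non-negative, $\mathcal{J}(v(t),p(t))\le\mathcal{E}(t)<d$, so the trajectory stays in $\mathcal{W}$ for all $t\in[0,T_{\max})$. I claim $\mathcal{W}_1$ is both relatively open and relatively closed in $\mathcal{W}$. It is relatively closed because its complement in $\mathcal{W}$ is $\mathcal{W}_2$, which is open. It is relatively open because $\mathcal{W}\cap\{I>0\}$ is open, and the origin has a full $\mathcal{V}$–neighborhood contained in $\mathcal{W}_1$: indeed $\hat{c}>2$ together with Sobolev's inequality gives $\|v\|_{n_1+1}^{n_1+1}+\|p\|_{n_2+1}^{n_2+1}\le C\big(\|\nabla v\|_2^2+\|\nabla p\|_2^2\big)^{\hat{c}/2}$, which is of strictly higher order near $(0,0)$ than the (coercive) quadratic form $\alpha_1\|\nabla v\|_2^2+\beta\|\gamma\nabla v-\nabla p\|_2^2$, so $I>0$ on a punctured neighborhood of $(0,0)$, while $\mathcal{J}(0,0)=0<d$. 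Since $[0,T_{\max})$ is connected and $t\mapsto(v(t),p(t))$ is continuous with values in $\mathcal{W}$ and passes through $(v_0,p_0)\in\mathcal{W}_1$ at $t=0$, the whole trajectory lies in $\mathcal{W}_1$; this is $(ii)$, and $(i)$ is contained in the first two sentences of this paragraph.

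\emph{The energy inequalities $(iii)$ and $(iv)$.} On $\mathcal{W}_1$ one has $\|v\|_{n_1+1}^{n_1+1}+\|p\|_{n_2+1}^{n_2+1}\le\alpha_1\|\nabla v\|_2^2+\beta\|\gamma\nabla v-\nabla p\|_2^2$ and $\tfrac1{n_i+1}\le\tfrac1{\hat{c}}$, so by \eqref{17-1}
\[\mathcal{J}(v,p)\ge\Big(\tfrac12-\tfrac1{\hat{c}}\Big)\big(\alpha_1\|\nabla v\|_2^2+\beta\|\gamma\nabla v-\nabla p\|_2^2\big),\]
and combining with $\mathcal{J}(v,p)\le\mathcal{E}(t)<d$ gives $\alpha_1\|\nabla v\|_2^2+\beta\|\gamma\nabla v-\nabla p\|_2^2<\tfrac{2\hat{c}}{\hat{c}-2}d$. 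Since by \eqref{231213} one has $E(t)=\mathcal{E}(t)+\tfrac1{n_1+1}\|v\|_{n_1+1}^{n_1+1}+\tfrac1{n_2+1}\|p\|_{n_2+1}^{n_2+1}\le\mathcal{E}(t)+\tfrac1{\hat{c}}\big(\alpha_1\|\nabla v\|_2^2+\beta\|\gamma\nabla v-\nabla p\|_2^2\big)<d+\tfrac1{\hat{c}}\cdot\tfrac{2\hat{c}}{\hat{c}-2}d=\tfrac{\hat{c}d}{\hat{c}-2}$, this proves $(iii)$. For $(iv)$, $\mathcal{E}(t)\le E(t)$ is immediate from \eqref{231213}; and $E(t)\ge\tfrac12\big(\alpha_1\|\nabla v\|_2^2+\beta\|\gamma\nabla v-\nabla p\|_2^2\big)\ge\tfrac12\big(\|v\|_{n_1+1}^{n_1+1}+\|p\|_{n_2+1}^{n_2+1}\big)$ on $\mathcal{W}_1$, so $\tfrac1{n_1+1}\|v\|_{n_1+1}^{n_1+1}+\tfrac1{n_2+1}\|p\|_{n_2+1}^{n_2+1}\le\tfrac1{\hat{c}}\big(\|v\|_{n_1+1}^{n_1+1}+\|p\|_{n_2+1}^{n_2+1}\big)\le\tfrac2{\hat{c}}E(t)$, whence $\mathcal{E}(t)=E(t)-\big(\tfrac1{n_1+1}\|v\|_{n_1+1}^{n_1+1}+\tfrac1{n_2+1}\|p\|_{n_2+1}^{n_2+1}\big)\ge\tfrac{\hat{c}-2}{\hat{c}}E(t)$.

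\emph{Globalization.} By $(iii)$, $E(t)<M:=\tfrac{\hat{c}d}{\hat{c}-2}$ uniformly on $[0,T_{\max})$, hence $\|v_t(t)\|_2$, $\|p_t(t)\|_2$, $\|\nabla v(t)\|_2$ and $\|\nabla p(t)\|_2$ are bounded by a constant depending only on $M$. If $T_{\max}<\infty$, I would apply Theorem \ref{thm2.2} again with initial time $t_1\in[0,T_{\max})$ taken with $T_{\max}-t_1$ small and initial data $(v(t_1),v_t(t_1),p(t_1),p_t(t_1))$, whose quadratic energy is $<M$; since the local existence time in Theorem \ref{thm2.2} can be taken to depend only on an upper bound for the quadratic energy of the data, one obtains a solution on $[t_1,t_1+\tau]$ with $\tau=\tau(M)>0$ independent of $t_1$, and concatenation extends $(v,p)$ beyond $T_{\max}$, a contradiction. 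Hence $T_{\max}=+\infty$ and all estimates hold for every $t\ge0$. The only genuinely delicate point is the clopen argument of the second step, specifically verifying that a full $\mathcal{V}$–neighborhood of the origin lies in $\mathcal{W}_1$ — so that $\mathcal{W}_1$ is relatively open and not merely $\mathcal{W}\cap\{I>0\}$ together with an isolated point — which is exactly where $\hat{c}>2$ enters; everything else is bookkeeping with the energy identity and the inequality $\hat{c}\le n_i+1$.
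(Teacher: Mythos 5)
Your proof is correct and follows the same overall strategy as the paper: invariance of $\mathcal{W}_1$ along the flow, the chain of energy inequalities $(i)$--$(iv)$, and continuation via the uniform bound on $E(t)$. The one place where you genuinely diverge is the invariance step. The paper argues by contradiction: it uses the intermediate value theorem to produce a last time $t^*$ at which $\alpha_1\|\nabla v\|_2^2+\beta\|\gamma\nabla v-\nabla p\|_2^2=\|v\|_{n_1+1}^{n_1+1}+\|p\|_{n_2+1}^{n_2+1}$, and then splits into the cases $(v(t^*),p(t^*))\neq(0,0)$ (which places the solution on the Nehari manifold, forcing $\mathcal{J}\geq d$) and $(v(t^*),p(t^*))=(0,0)$ (excluded because $\mathcal{W}_2$ is bounded away from the origin, $\alpha_1\|\nabla v\|_2^2+\beta\|\gamma\nabla v-\nabla p\|_2^2>s_0$ there). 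You instead run a connectedness argument: $\mathcal{W}_1$ and $\mathcal{W}_2$ are disjoint open subsets covering $\mathcal{W}$, so a continuous trajectory in $\mathcal{W}$ starting in $\mathcal{W}_1$ cannot leave it. The two arguments rest on exactly the same pair of facts --- that $\mathcal{W}\cap\{I=0\}\subseteq\{(0,0)\}$ (the Nehari/mountain-pass input behind $\mathcal{W}=\mathcal{W}_1\cup\mathcal{W}_2$) and that the origin is separated from $\mathcal{W}_2$ (the Sobolev-embedding input, which you use in the form ``a full $\mathcal{V}$-neighborhood of $(0,0)$ lies in $\mathcal{W}_1$'' and the paper uses in the dual form ``$\mathcal{W}_2$ stays at distance $s_0$ from the origin'') --- but your topological packaging is cleaner and avoids the supremum-time bookkeeping. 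Your treatment of $(i)$, $(iii)$, $(iv)$ and the continuation step (local existence time controlled from below by an upper bound on the quadratic energy, cf.\ \eqref{12170918}) matches the paper's computations essentially line for line.
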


\begin{theorem}[\textbf{{\color{blue}Energy} decay rates}]\label{thm2}
Let $1<n_1,n_2\le 5$ and Assumption $\ref{ass}(3)$ hold. Suppose
$v_0\in L^{m_1+1}(\Omega),~p_0\in L^{m_2+1}(\Omega)$ and
 \[(v_0,p_0)\in\tilde{\mathcal{W}}^{\delta}_1,~\mathcal{E}(0)\leq\Lambda(s^*-\delta),\] for a suitably small $\delta>0$ satisfying \eqref{delta-1}, where $\tilde{\mathcal{W}}^{\delta}_1$ and $\Lambda(s^*-\delta)$ are defined in Section $\ref{sec5}$.
 Then the global solution of problem \eqref{1.1}
has the following decay rates.
\begin{enumerate}[$(i)$]
  \item Both $m_1$ and $m_2$ are equal to $1$,
the total energy $\mathcal{E}(t)$ and the quadratic energy $E(t)$
decay to zero exponentially, that is, for any $t\geq0$,
\begin{align*}
\frac{\hat{c}-2}{\hat{c}}E(t)\leq\mathcal{E}(t)\le
\mathcal{E}(0)e^{1-\omega t},
\end{align*}
where $\omega$ is a positive constant.
  \item At least one of $m_1$ and $m_2$  is not equal to $1$, in addition, let $v\in
L^\infty(\mathbb{R}^+;L^{\frac{3}{2}(m_1-1)}(\Omega))$ if $m_1>5$,
$p\in L^\infty(\mathbb{R}^+;L^{\frac{3}{2}(m_2-1)}(\Omega))$ if
$m_2>5$, then the total energy $\mathcal{E}(t)$ and the quadratic
energy $E(t)$ decay polynomially, that is,
\begin{align*}
\frac{\hat{c}-2}{\hat{c}}E(t)\leq\mathcal{E}(t)\le
\mathcal{E}(0)\Big(\frac{1+\eta}{1+\omega \eta
t}\Big)^{\frac{1}{\eta}},
\end{align*}
where $\omega$ is a positive constant, and
$\eta=\max\{\frac{m_1-1}{2},\frac{m_2-1}{2}\}$.
  \item At least one of $m_1$ and $m_2$  is not equal to $1$, assume that $\psi : \mathbb{R}^+\to\mathbb{R}^+$ is a strictly increasing function of class $C^1$, and
\begin{align*}\psi(t)=\int_0^t\chi(s)ds\text{ with }\lim_{t\to \infty}\psi(t)=+\infty,
\end{align*} $\chi$ mapping  $\mathbb{R}^+$ to  $\mathbb{R}^+$ is a non-increasing differential function with $\chi(0)>0$, and $\chi(t)(1+t)$ is uniformly bounded for $t\ge 0$, then
\begin{equation}
\label{241542} \frac{\hat{c}-2}{\hat{c}}E(t)\le \mathcal{E}(t)\leq
\mathcal{E}(0)\left(\frac{1+\eta}{1+\omega\eta\psi(t)}\right)^{\frac{1}{\eta}},
\end{equation}
where $\omega$ is a positive constant, and
$\eta=\max\{\frac{m_1-1}{2},\frac{m_2-1}{2}\}$.
\end{enumerate}
\end{theorem}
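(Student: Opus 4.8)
The plan is to run a perturbed‑energy (Lyapunov) argument adapted to the coupled structure of \eqref{1.1}, to neutralise the supercritical sources by means of the potential well, and then to feed the resulting differential inequality into a nonlinear, possibly weighted, integral inequality of Komornik--Martinez type. From Theorem~\ref{thmg} I import the global solution together with the bounds $\mathcal{J}(v,p)\le\mathcal{E}(t)\le\mathcal{E}(0)$, the uniform boundedness of $G(t):=\alpha_1\|\nabla v\|_2^2+\beta\|\gamma\nabla v-\nabla p\|_2^2$, and the equivalence $\tfrac{\hat{c}-2}{\hat{c}}E(t)\le\mathcal{E}(t)\le E(t)$ (which already supplies the left‑hand inequalities of $(i)$--$(iii)$).

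First I would prove, exactly as for Theorem~\ref{thmg}, that $\tilde{\mathcal{W}}_1^{\delta}$ is invariant along the flow, so $(v(t),p(t))\in\tilde{\mathcal{W}}_1^{\delta}$ for all $t\ge0$; the point of the smallness set is the \emph{quantitative} coercivity estimate $\|v\|_{n_1+1}^{n_1+1}+\|p\|_{n_2+1}^{n_2+1}\le\kappa\,G(t)$ with $\kappa=\kappa(\delta)\in(0,1)$, which is precisely what absorbs the sources. Next, introduce the multiplier $\Psi(t):=\rho\int_\Omega vv_t\,dx+\mu\int_\Omega pp_t\,dx$ and set $L(t):=N\mathcal{E}(t)+\varepsilon\Psi(t)$; for $N$ large and $\varepsilon$ small, $L\sim\mathcal{E}\sim E$. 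Testing the two equations of \eqref{1.1} with $v$ and $p$ and adding, the boundary integrals over $\Gamma_1$ cancel by the coupled Neumann conditions, and the mixed gradient terms combine exactly into $\alpha\|\nabla v\|_2^2+\beta\|\nabla p\|_2^2-2\gamma\beta\int_\Omega\nabla v\cdot\nabla p\,dx=G(t)$ (this is where $\alpha_1=\alpha-\gamma^2\beta$ enters), which yields
\[
\Psi'(t)=\rho\|v_t\|_2^2+\mu\|p_t\|_2^2-G(t)-\!\int_\Omega g_1(v_t)v\,dx-\!\int_\Omega g_2(p_t)p\,dx+\|v\|_{n_1+1}^{n_1+1}+\|p\|_{n_2+1}^{n_2+1}.
\]
The damping cross terms are estimated by H\"older and Young, $\bigl|\int_\Omega g_1(v_t)v\,dx\bigr|\le\|v_t\|_{m_1+1}^{m_1}\|v\|_{m_1+1}\le\varepsilon_0\|v\|_{m_1+1}^{m_1+1}+C_{\varepsilon_0}\|v_t\|_{m_1+1}^{m_1+1}$, and for $m_1\le5$ the embedding $H_{\Gamma_0}^1(\Omega)\hookrightarrow L^{m_1+1}(\Omega)$ together with the boundedness of $G$ gives $\|v\|_{m_1+1}^{m_1+1}\le C\,G(t)^{(m_1+1)/2}\le C'\,G(t)$; for $m_1>5$ the extra hypothesis $v\in L^\infty(\mathbb{R}^+;L^{3(m_1-1)/2}(\Omega))$ of part $(ii)$ is exactly what makes the necessary interpolation legitimate, and likewise for $p$. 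Because Poincar\'e's inequality lets us express everything through $\|\nabla v\|_2$ rather than $\|v\|_2$, \emph{no uncontrollable lower‑order terms ($\|v\|_2^2$, $\|p\|_2^2$) are produced}, and the usual compactness--uniqueness step is avoided — this is the structural simplification announced in the introduction. Using the coercivity estimate to absorb the sources, one reaches a differential inequality of the form
\[
L'(t)\le-c_1\bigl(\|v_t\|_{m_1+1}^{m_1+1}+\|p_t\|_{m_2+1}^{m_2+1}\bigr)-c_2\,G(t)+c_3\bigl(\|v_t\|_2^2+\|p_t\|_2^2\bigr),\qquad c_1,c_2,c_3>0.
\]

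For part $(i)$, $m_1=m_2=1$ gives $\|v_t\|_2^2=\|v_t\|_{m_1+1}^{m_1+1}$ and $\|p_t\|_2^2=\|p_t\|_{m_2+1}^{m_2+1}$, so enlarging $N$ absorbs the last group, $L'(t)\le-\omega L(t)$, and Gronwall's inequality with a suitable choice of the equivalence constants yields $\mathcal{E}(t)\le\mathcal{E}(0)e^{1-\omega t}$. Parts $(ii)$ and $(iii)$ are the substantial case and contain the main obstacle: for $m_i>1$ the dissipation $\|v_t\|_{m_i+1}^{m_i+1}$ is genuinely weaker than the kinetic term $\|v_t\|_2^2$ carrying the $+$ sign, so the inequality does not close by itself. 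I would handle this by first splitting $\Omega=\{|v_t|\le1\}\cup\{|v_t|>1\}$ — on $\{|v_t|>1\}$ one has $|v_t|^2\le|v_t|^{m_i+1}$, while on $\{|v_t|\le1\}$ an $L^2$--$L^{m_i+1}$ interpolation produces a \emph{bounded} quantity — to obtain, with $\eta:=\max\{\tfrac{m_1-1}{2},\tfrac{m_2-1}{2}\}$ and using $\mathcal{E}(t)\le\mathcal{E}(0)$ and $1+\eta\ge\tfrac{m_i+1}{2}$, the key bound
\[
\mathcal{E}(t)^{\eta}\,\|v_t\|_2^2\le\varepsilon\,\mathcal{E}(t)^{1+\eta}+C_\varepsilon\,\|v_t\|_{m_1+1}^{m_1+1}
\]
(and likewise for $p$); and then multiplying the differential inequality by $\chi(t)\mathcal{E}(t)^{\eta}$, with $\chi=\psi'$, and integrating from $S$ to $T$. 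Integration by parts, the monotonicity $\chi'\le0$ and $\mathcal{E}'(t)=-(\|v_t\|_{m_1+1}^{m_1+1}+\|p_t\|_{m_2+1}^{m_2+1})\le0$, the displayed bound, the elementary inequality $G(t)\ge c\,\mathcal{E}(t)-C(\|v_t\|_2^2+\|p_t\|_2^2)$, and $\int_S^T\chi(t)(-\mathcal{E}'(t))\,dt\le\chi(0)\mathcal{E}(S)$ together give, after also using $\mathcal{E}(S)^{1+\eta}\le C\,\mathcal{E}(S)$,
\[
\int_S^{\infty}\chi(t)\,\mathcal{E}(t)^{1+\eta}\,dt\le C\,\mathcal{E}(S)\qquad\text{for all }S\ge0.
\]
The hypotheses that $\chi$ is non‑increasing with $\chi(0)>0$, that $\chi(t)(1+t)$ is uniformly bounded, and that $\psi(t)=\int_0^t\chi(s)\,ds\to\infty$ are precisely those under which the corresponding weighted nonlinear integral inequality of Komornik--Martinez type applies, producing $\mathcal{E}(t)\le\mathcal{E}(0)\bigl(\tfrac{1+\eta}{1+\omega\eta\psi(t)}\bigr)^{1/\eta}$; taking $\chi\equiv1$, $\psi(t)=t$ then recovers the polynomial rate of part $(ii)$. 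I expect the main difficulties to be: extracting the quantitative constant $\kappa<1$ from $\tilde{\mathcal{W}}_1^{\delta}$; closing the damping cross‑term estimate in the supercritical window $m_i>5$ (hence the extra integrability demanded in part $(ii)$); and the weighted‑multiplier bookkeeping that converts the weak dissipation into the general decay \eqref{241542} while keeping every estimate free of lower‑order terms.
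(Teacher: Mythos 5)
Your overall route coincides with the paper's: invariance of $\tilde{\mathcal{W}}^{\delta}_1$ giving the quantitative coercivity constant $C(\delta)<1$ that absorbs the sources, the multiplier $\rho\int_\Omega vv_t\,dx+\mu\int_\Omega pp_t\,dx$ weighted by $\chi(t)\mathcal{E}^{\eta}(t)$, the Young-inequality trick $\mathcal{E}^{\eta}\|v_t\|_2^2\le\varepsilon\mathcal{E}^{\eta\frac{m_1+1}{m_1-1}}+C_\varepsilon\|v_t\|_{m_1+1}^{m_1+1}$ with $\eta=\max\{\frac{m_1-1}{2},\frac{m_2-1}{2}\}$, and the Martinez integral lemma. (The paper plugs the weighted test functions directly into the weak identities \eqref{171146}--\eqref{171147} and integrates from $S$ to $T$, rather than first forming a pointwise Lyapunov inequality; that sidesteps the regularity issue in differentiating $\Psi(t)$, but your version can be repaired the same way, so this is only a reorganization.) Parts $(i)$ and $(ii)$ of your argument are essentially the paper's proof.

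There is, however, a genuine gap in part $(iii)$, which is the part carrying the paper's claimed novelty. When $m_1>5$ (say), the damping--displacement cross term $\int_\Omega|v_t|^{m_1-1}v_t v\,dx\le\|v_t\|_{m_1+1}^{m_1}\|v\|_{m_1+1}$ requires a bound on $\|v\|_{m_1+1}$, and since $m_1+1>6$ the embedding $H^1_{\Gamma_0}(\Omega)\hookrightarrow L^{m_1+1}(\Omega)$ fails. In part $(ii)$ you (correctly) invoke the extra hypothesis $v\in L^\infty(\mathbb{R}^+;L^{\frac{3}{2}(m_1-1)}(\Omega))$ to interpolate; but part $(iii)$ drops exactly that hypothesis, and your proposal offers no substitute — you assert that the conditions on $\chi$ (non-increasing, $\chi(t)(1+t)$ bounded, $\psi\to\infty$) are ``precisely those under which the Komornik--Martinez inequality applies,'' which is not where they are needed: the Martinez lemma (Lemma \ref{lem111}) only requires $\psi$ increasing with $\psi(0)=0$ and $\psi\to\infty$. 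The missing mechanism is the growing-in-time bound $\|v(t)\|_{m_1+1}\le 2^{\frac{m_1}{m_1+1}}\bigl(\|v_0\|_{m_1+1}+t^{\frac{m_1}{m_1+1}}(\mathcal{E}(0))^{\frac{1}{m_1+1}}\bigr)$, obtained from $v=v_0+\int_0^t v_t\,ds$, H\"older in time, and $\int_0^\infty\|v_t\|_{m_1+1}^{m_1+1}ds\le\mathcal{E}(0)$ — this is the only place the theorem's hypothesis $v_0\in L^{m_1+1}(\Omega)$, $p_0\in L^{m_2+1}(\Omega)$ is used, and your proposal never uses it. The resulting factor $(1+t)$ in the estimate of $I_7$ is then absorbed precisely by the assumption that $\chi(t)(1+t)$ is uniformly bounded, which is why the decay in $(iii)$ is measured in $\psi(t)=\int_0^t\chi$ rather than in $t$. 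Without this step your differential inequality does not close for $m_i>5$ in part $(iii)$.
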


\begin{remark}
In Theorem \ref{thm2}$(ii)$, the new regularity result that $v\in
L^\infty(\mathbb{R}^+;L^{\frac{3}{2}(m_1-1)}(\Omega))$ if $m_1>5$,
$p\in L^\infty(\mathbb{R}^+;L^{\frac{3}{2}(m_2-1)}(\Omega))$ if
$m_2>5$ must be established. Thus, the initial data $v_0,~p_0$ have
to be smoother. Although here we remove the strong conditions, a
weaker energy decay is obtained in Theorem \ref{thm2}$(iii)$. As a
example, let us choose $\chi(t)=\frac{1}{C+t}$,  where  $C\ge 1$ is
constant.  A simple computation implies that $\chi(t)$ satisfies the
conditions in Theorem \ref{thm2}$(iii)$. In addition,
$\psi(t)=\int_0^t\frac{1}{C+s}ds=\ln\frac{C+t}{C}.$ Therefore, one
has the following logarithmic energy decay
\[\mathcal{E}(t)\leq  \mathcal{E}(0)\left(\frac{1+\eta}{1+\omega\eta\ln\frac{C+t}{C}}\right)^{\frac{1}{\eta}}.\]
\end{remark}

If the source terms are stronger than damping terms, we obtain
blow-up results when the initial energy is negative, positive,
respectively.

\begin{theorem}[\textbf{Blow-up in finite time for negative initial energy}]
\label{thmb1} Let Assumption $\ref{ass}(3)$ hold. Assume $n_1>m_1$,
$n_2>m_2$, and $\mathcal{E}(0)<0$. Then the weak solution $(v,p)$ of
problem \eqref{1.1} blows up in finite time. In particular,
$$
\limsup_{t \to T_{max}^-} (\alpha_1\|\nabla
v\|_2^2+\beta\|\gamma\nabla v-\nabla p\|_2^2)= +\infty,
$$
for some $0<T_{max}<\infty$.

\end{theorem}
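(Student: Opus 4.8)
The plan is to use the perturbed-energy (concavity / Levine--Serrin) argument adapted to the coupled quadratic form, leading to a differential inequality of the type $\mathcal L'\ge\kappa\,\mathcal L^{1/(1-\sigma)}$ with $\tfrac1{1-\sigma}>1$. Set $H(t):=-\mathcal E(t)$. From the energy identity \eqref{231219} and $\mathcal E(0)<0$ one gets $H'(t)=\|v_t\|_{m_1+1}^{m_1+1}+\|p_t\|_{m_2+1}^{m_2+1}\ge0$ and $H(t)\ge H(0)=-\mathcal E(0)>0$; discarding the nonnegative quadratic part of $\mathcal E$ in \eqref{231213} also gives $0<H(t)\le\frac1{n_1+1}\|v\|_{n_1+1}^{n_1+1}+\frac1{n_2+1}\|p\|_{n_2+1}^{n_2+1}$, a bound I use repeatedly to absorb lower-order terms. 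Introduce
\[
\mathcal L(t):=H^{1-\sigma}(t)+\varepsilon\Big(\rho\int_\Omega v v_t\,dx+\mu\int_\Omega p p_t\,dx\Big),
\]
with $\sigma>0$ and $\varepsilon>0$ small, to be fixed at the end (in that order).

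Differentiating $\mathcal L$, inserting the two equations of \eqref{1.1}, and integrating by parts (all boundary terms vanish: on $\Gamma_0$ since $v=p=0$, on $\Gamma_1$ since the two coupled Neumann conditions together force $\partial_{\vec n}v=\partial_{\vec n}p=0$), the second-derivative terms produce $-\alpha\|\nabla v\|_2^2-\beta\|\nabla p\|_2^2+2\gamma\beta\int_\Omega\nabla v\cdot\nabla p\,dx=-Q(t)$, where $Q(t):=\alpha_1\|\nabla v\|_2^2+\beta\|\gamma\nabla v-\nabla p\|_2^2$; recognizing this combination as exactly $-Q$ is what makes the next step clean. Eliminating $-Q$ via the definition of $H$ (namely $-Q=\rho\|v_t\|_2^2+\mu\|p_t\|_2^2+2H-\frac2{n_1+1}\|v\|_{n_1+1}^{n_1+1}-\frac2{n_2+1}\|p\|_{n_2+1}^{n_2+1}$) yields
\[
\tfrac{d}{dt}\Big(\rho\!\int_\Omega v v_t\,dx+\mu\!\int_\Omega p p_t\,dx\Big)=2\big(\rho\|v_t\|_2^2+\mu\|p_t\|_2^2\big)+2H+\tfrac{n_1-1}{n_1+1}\|v\|_{n_1+1}^{n_1+1}+\tfrac{n_2-1}{n_2+1}\|p\|_{n_2+1}^{n_2+1}-\mathcal D(t),
\]
where $\mathcal D(t)=\int_\Omega v\,g_1(v_t)\,dx+\int_\Omega p\,g_2(p_t)\,dx$. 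Since $n_i>m_i\ge1$, the two source coefficients are strictly positive, so the source contribution dominates $c_0\big(\|v\|_{n_1+1}^{n_1+1}+\|p\|_{n_2+1}^{n_2+1}\big)$ with $c_0:=\min\{\tfrac{n_1-1}{n_1+1},\tfrac{n_2-1}{n_2+1}\}>0$.

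The technical heart is the damping estimate. By Hölder, $|\int_\Omega v\,g_1(v_t)\,dx|\le\|v\|_{m_1+1}\|v_t\|_{m_1+1}^{m_1}$, and Young's inequality with an $H$-dependent weight gives $\varepsilon|\int_\Omega v\,g_1(v_t)\,dx|\le\frac{1-\sigma}{2}H^{-\sigma}\|v_t\|_{m_1+1}^{m_1+1}+C\varepsilon^{m_1+1}H^{\sigma m_1}\|v\|_{m_1+1}^{m_1+1}$, and analogously for $p$; the $H^{-\sigma}\|v_t\|_{m_1+1}^{m_1+1}$ and $H^{-\sigma}\|p_t\|_{m_2+1}^{m_2+1}$ pieces are then swallowed by the $(1-\sigma)H^{-\sigma}H'$ coming from $\tfrac{d}{dt}H^{1-\sigma}$. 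For the remaining piece, $\|v\|_{m_1+1}\le C\|v\|_{n_1+1}$ (since $\Omega$ is bounded), and using $H\lesssim\|v\|_{n_1+1}^{n_1+1}+\|p\|_{n_2+1}^{n_2+1}$, the elementary inequality $x^\theta\le x+1$ for $\theta\in[0,1]$, and $1\lesssim\|v\|_{n_1+1}^{n_1+1}+\|p\|_{n_2+1}^{n_2+1}$ (this quantity being bounded below by $\hat c H(0)>0$), one folds $H^{\sigma m_i}\|v\|_{m_i+1}^{m_i+1}$ into a constant multiple of $\|v\|_{n_1+1}^{n_1+1}+\|p\|_{n_2+1}^{n_2+1}$ — which forces $\sigma\le\frac{n_i-m_i}{m_i(n_i+1)}$ so that the relevant exponent stays $\le1$, and is exactly where $n_i>m_i$ enters. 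With $\sigma$ so fixed, choosing $\varepsilon$ small enough that the constant multiplying the source stays $\le c_0/2$ gives
\[
\mathcal L'(t)\ge c_1\Big(\rho\|v_t\|_2^2+\mu\|p_t\|_2^2+H(t)+\|v\|_{n_1+1}^{n_1+1}+\|p\|_{n_2+1}^{n_2+1}\Big)\qquad(c_1>0),
\]
so $\mathcal L$ is nondecreasing and $\mathcal L(t)\ge\mathcal L(0)>0$ for $\varepsilon$ small (since $H(0)^{1-\sigma}>0$).

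It remains to bound $\mathcal L^{1/(1-\sigma)}$ by the same right-hand side. Writing $\mathcal L^{1/(1-\sigma)}\le C\big(H+|\rho\int_\Omega v v_t\,dx+\mu\int_\Omega p p_t\,dx|^{1/(1-\sigma)}\big)$, estimating $|\int_\Omega v v_t\,dx|\le\|v\|_2\|v_t\|_2$, $\|v\|_2\le C\|v\|_{n_1+1}$, and applying Young's inequality (here one needs $\sigma\le\frac{n_i-1}{2(n_i+1)}$, in particular $\sigma<\tfrac12$, so that $\tfrac{2}{1-2\sigma}\le n_i+1$), together with $H\lesssim\|v\|_{n_1+1}^{n_1+1}+\|p\|_{n_2+1}^{n_2+1}$, one obtains $\mathcal L^{1/(1-\sigma)}\le C_2\big(\rho\|v_t\|_2^2+\mu\|p_t\|_2^2+\|v\|_{n_1+1}^{n_1+1}+\|p\|_{n_2+1}^{n_2+1}\big)$. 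Combining, $\mathcal L'(t)\ge\kappa\,\mathcal L(t)^{1/(1-\sigma)}$ with $\kappa>0$; since $\mathcal L(0)>0$, integrating this inequality forces $\mathcal L(t)\to+\infty$ as $t\to t^*{}^{-}$ for some $t^*\le\frac{1-\sigma}{\sigma\kappa}\mathcal L(0)^{-\sigma/(1-\sigma)}$, hence $T_{max}\le t^*<\infty$. Finally, the same Young and Sobolev bounds give $\mathcal L(t)\le C\big(\|v\|_{n_1+1}^{n_1+1}+\|p\|_{n_2+1}^{n_2+1}\big)$ (using $\mathcal E(t)<0$, i.e. $\rho\|v_t\|_2^2+\mu\|p_t\|_2^2+Q\le\frac2{n_1+1}\|v\|_{n_1+1}^{n_1+1}+\frac2{n_2+1}\|p\|_{n_2+1}^{n_2+1}$, to bound $\|v\|_2\|v_t\|_2\lesssim Q+\rho\|v_t\|_2^2\lesssim$ source), so $\|v(t)\|_{n_1+1}^{n_1+1}+\|p(t)\|_{n_2+1}^{n_2+1}\to+\infty$ as $t\to T_{max}^-$; by the embedding $H^1_{\Gamma_0}(\Omega)\hookrightarrow L^{n_i+1}(\Omega)$ and $\|\nabla v\|_2^2,\|\nabla p\|_2^2\le CQ$ this is impossible unless $\limsup_{t\to T_{max}^-}Q(t)=+\infty$, which is the assertion. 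The main obstacle throughout is the damping term together with the interlocking smallness constraints on $\sigma$ — small enough for the weighted-Young absorption (using $n_i>m_i$), small enough for the bound on $\mathcal L^{1/(1-\sigma)}$ (using $n_i>1$), yet strictly positive so that the differential inequality is genuinely superlinear — with $\varepsilon$ chosen only afterwards; a secondary subtlety is carrying the coupling term $2\gamma\beta\int_\Omega\nabla v\cdot\nabla p\,dx$ through so as to recognize the quadratic form $-Q$ and substitute via $H$.
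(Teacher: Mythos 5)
Your proposal follows essentially the same route as the paper's proof: both set $H=-\mathcal E$, form the perturbed functional $H^{1-\sigma}+\varepsilon\big(\rho\int_\Omega v v_t\,dx+\mu\int_\Omega p p_t\,dx\big)$, absorb the damping by a weighted Young inequality with an $H$-dependent weight (which is exactly where $n_i>m_i$ enters), and close the superlinear differential inequality $\mathcal L'\ge\kappa\,\mathcal L^{1/(1-\sigma)}$ before transferring the blow-up to the quadratic form via the Sobolev embedding. The one point the paper treats more carefully is that $\frac{d}{dt}\big(\rho\int_\Omega vv_t\,dx+\mu\int_\Omega pp_t\,dx\big)$ cannot be obtained by formally inserting the equations (weak solutions lack $v_{tt}$, $p_{tt}$ regularity); instead one takes $\varphi=v$, $\psi=p$ in the weak formulation and verifies absolute continuity of the resulting expression, but this is a technical justification rather than a different idea, so your argument is correct in substance.
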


\begin{remark} \label{rmk-p}
For  $n_1>m_1$, $n_2>m_2$  and $n_1\frac{m_1+1}{m_1}<6$,
$n_2\frac{m_2+1}{m_2}<6$ from Assumption $\ref{ass}(3)$, in fact, we
give the restriction that $1<n_1,n_2<5$ and $1\leq m_1,m_2 <5$ for
the blow-up results.
\end{remark}

\begin{theorem}[\textbf{Blow-up in finite time for bounded positive initial energy}]\label{thm6-2}
Let Assumption $\ref{ass}(3)$ hold. Assume $n_1>m_1$, $n_2>m_2$, and
\begin{align*}
E(0)>y_0,\quad 0\leq\mathcal{E}(0)<
\mathcal{M}:=\frac{\hat{c}-2}{2(2+\hat{c})}y_0.
\end{align*}
Then the weak solution $(v,p)$ of problem \eqref{1.1} blows up in
finite time. In other words, $ \limsup_{t \to T_{max}^-}
(\alpha_1\|\nabla v\|_2^2+\beta\|\gamma\nabla v-\nabla p\|_2^2)=
+\infty, $ for some $0<T_{max}<\infty$. Here $y_0$ is defined in
\eqref{b2}.
\end{theorem}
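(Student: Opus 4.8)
\medskip
\noindent\textbf{Proof sketch of Theorem \ref{thm6-2}.}
The plan is to combine an invariance property of the unstable regime with a blow-up differential inequality $\mathcal{G}'(t)\ge\kappa\,\mathcal{G}(t)^{q}$, $q>1$. Throughout set $Q(v,p):=\alpha_1\|\nabla v\|_2^2+\beta\|\gamma\nabla v-\nabla p\|_2^2$, let $I(v,p):=Q(v,p)-\|v\|_{n_1+1}^{n_1+1}-\|p\|_{n_2+1}^{n_2+1}$ denote the Nehari functional, and put $\mathcal{H}(t):=\mathcal{M}-\mathcal{E}(t)$. By \eqref{231219}, $\mathcal{H}'(t)=\|v_t\|_{m_1+1}^{m_1+1}+\|p_t\|_{m_2+1}^{m_2+1}\ge0$, so $\mathcal{H}$ is nondecreasing with $\mathcal{H}(t)\ge\mathcal{H}(0)=\mathcal{M}-\mathcal{E}(0)>0$; moreover $n_i>m_i$ and Assumption \ref{ass}$(3)$ give $1<n_i<5$, $1\le m_i<5$ (Remark \ref{rmk-p}), so $H^1_{\Gamma_0}(\Omega)\hookrightarrow L^{n_i+1}(\Omega)\hookrightarrow L^{m_i+1}(\Omega)$. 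First I would show that the two hypotheses put $(v_0,p_0)$ into $\{I<0\}\setminus\{(0,0)\}$ and that this set is forward invariant. If $I(v_0,p_0)\ge0$, then $\mathcal{E}(0)\ge\mathcal{J}(v_0,p_0)\ge\tfrac{\hat c-2}{2\hat c}Q(v_0,p_0)$; combined with $\mathcal{E}(0)<\mathcal{M}=\tfrac{\hat c-2}{2(\hat c+2)}y_0$ and $E(0)=\mathcal{E}(0)-\mathcal{J}(v_0,p_0)+\tfrac12 Q(v_0,p_0)$, this forces $E(0)<\tfrac{\hat c}{2(\hat c+2)}y_0<y_0$, contradicting $E(0)>y_0$. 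Since the Sobolev inequality $\|v\|_{n_1+1}^{n_1+1}+\|p\|_{n_2+1}^{n_2+1}\le K_1Q^{(n_1+1)/2}+K_2Q^{(n_2+1)/2}$ forces $Q\ge Q_0>0$ on $\{I<0\}$, a first time $t_1$ with $I(v(t_1),p(t_1))=0$ would yield $(v(t_1),p(t_1))\in\mathcal{N}$ and $\mathcal{E}(t_1)\ge\tfrac{\hat c-2}{2\hat c}Q_0$, which (if $y_0$ in \eqref{b2} is calibrated so that $y_0\le\tfrac12 Q_0$) strictly exceeds $\mathcal{M}\ge\mathcal{E}(0)\ge\mathcal{E}(t_1)$, which is impossible. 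Hence $(v(t),p(t))\in\{I<0\}$ for all $t$, so $E(t)\ge\tfrac12 Q\ge\tfrac12 Q_0\ge y_0>\mathcal{M}$, and then $\mathcal{E}(t)=E(t)-\tfrac1{n_1+1}\|v\|_{n_1+1}^{n_1+1}-\tfrac1{n_2+1}\|p\|_{n_2+1}^{n_2+1}$ gives the key absorption of the constant $\mathcal{M}$:
\[0<\mathcal{H}(0)\le\mathcal{H}(t)\le\frac1{\hat c}\Big(\|v(t)\|_{n_1+1}^{n_1+1}+\|p(t)\|_{n_2+1}^{n_2+1}\Big).\]

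Next, rewriting $Q$ through $\mathcal{E}$ and the kinetic energy, using $-\mathcal{E}(t)=\mathcal{H}(t)-\mathcal{M}$, gives
\[-I(v,p)=2\mathcal{H}(t)-2\mathcal{M}+\rho\|v_t\|_2^2+\mu\|p_t\|_2^2+\tfrac{n_1-1}{n_1+1}\|v\|_{n_1+1}^{n_1+1}+\tfrac{n_2-1}{n_2+1}\|p\|_{n_2+1}^{n_2+1}.\]
Since $\tfrac{n_i-1}{n_i+1}\ge\tfrac{\hat c-2}{\hat c}$, one can split off the fraction $\tfrac{\hat c-2}{2(\hat c+2)}$ of the source norms and a fraction of $\rho\|v_t\|_2^2+\mu\|p_t\|_2^2$ to absorb $-2\mathcal{M}$ by means of $\rho\|v_t\|_2^2+\mu\|p_t\|_2^2+\|v\|_{n_1+1}^{n_1+1}+\|p\|_{n_2+1}^{n_2+1}>\rho\|v_t\|_2^2+\mu\|p_t\|_2^2+Q=2E(t)\ge\tfrac{4(\hat c+2)}{\hat c-2}\mathcal{M}$, obtaining a separation estimate $-I(v(t),p(t))\ge c_0\big(\mathcal{H}(t)+\rho\|v_t\|_2^2+\mu\|p_t\|_2^2+\|v\|_{n_1+1}^{n_1+1}+\|p\|_{n_2+1}^{n_2+1}\big)$ with $c_0>0$. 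Then, for $\varepsilon>0$ small and $\sigma\in(0,\sigma_0]$ with $\sigma_0:=\min\big\{\tfrac{\hat c-2}{2\hat c},\tfrac{n_1-m_1}{m_1(n_1+1)},\tfrac{n_2-m_2}{m_2(n_2+1)}\big\}>0$, I would take
\[\mathcal{G}(t):=\mathcal{H}^{1-\sigma}(t)+\varepsilon\Big(\rho\int_\Omega v\,v_t\,dx+\mu\int_\Omega p\,p_t\,dx\Big),\]
so that $\mathcal{G}(0)>0$.

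Testing the equations in \eqref{1.1} with $v$ and $p$ (the conditions on $\Gamma_0\cup\Gamma_1$ cancel the boundary integrals) yields $\rho\int_\Omega v v_{tt}\,dx+\mu\int_\Omega p p_{tt}\,dx=-I(v,p)-\int_\Omega(g_1(v_t)v+g_2(p_t)p)\,dx$, and hence, with the separation estimate,
\[\mathcal{G}'(t)\ge(1-\sigma)\mathcal{H}^{-\sigma}\big(\|v_t\|_{m_1+1}^{m_1+1}+\|p_t\|_{m_2+1}^{m_2+1}\big)+\varepsilon c_0\big(\mathcal{H}+\rho\|v_t\|_2^2+\mu\|p_t\|_2^2+\|v\|_{n_1+1}^{n_1+1}+\|p\|_{n_2+1}^{n_2+1}\big)-\varepsilon\!\int_\Omega\!\big(|v_t|^{m_1}|v|+|p_t|^{m_2}|p|\big)dx.\]
I would estimate the damping integrals by Young's inequality with a time-dependent weight $\theta_i(t)=c\,\mathcal{H}(t)^{-\sigma m_i/(m_i+1)}$: the resulting $\|v_t\|_{m_1+1}^{m_1+1}$, $\|p_t\|_{m_2+1}^{m_2+1}$ pieces are absorbed into $(1-\sigma)\mathcal{H}^{-\sigma}(\cdot)$, while the leftover $C\mathcal{H}^{\sigma m_i}\|v\|_{n_i+1}^{m_i+1}$ obeys --- precisely because $\sigma m_i+\tfrac{m_i+1}{n_i+1}\le1$, which needs $n_i>m_i$ and dictates the choice of $\sigma_0$ --- the bound $C(\mathcal{H}+\|v\|_{n_i+1}^{n_i+1}+1)$, whose coefficient is $\le C\varepsilon^{m_i+1}$ and is therefore absorbed into the $\varepsilon c_0$-terms for $\varepsilon$ small (the constant being absorbed via $\mathcal{H}\ge\mathcal{H}(0)$). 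This leaves $\mathcal{G}'(t)\ge\kappa\big(\mathcal{H}(t)+\|v_t\|_2^2+\|p_t\|_2^2+\|v\|_{n_1+1}^{n_1+1}+\|p\|_{n_2+1}^{n_2+1}\big)$, $\kappa>0$; in particular $\mathcal{G}$ is increasing and positive. On the other hand, Hölder and Young, together with $\tfrac{2}{1-2\sigma}\le\hat c\le n_i+1$ (valid as $\sigma\le\sigma_0\le\tfrac{\hat c-2}{2\hat c}$), give $\mathcal{G}(t)^{1/(1-\sigma)}\le C\big(\mathcal{H}+\|v_t\|_2^2+\|p_t\|_2^2+\|v\|_{n_1+1}^{n_1+1}+\|p\|_{n_2+1}^{n_2+1}\big)$, so $\mathcal{G}'(t)\ge\kappa'\mathcal{G}(t)^{1/(1-\sigma)}$ with exponent $>1$. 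Integrating this ODE inequality from $\mathcal{G}(0)>0$ forces $\mathcal{G}(t)\to+\infty$ at a finite time $T_*\le\tfrac{1-\sigma}{\kappa'\sigma}\mathcal{G}(0)^{-\sigma/(1-\sigma)}$; hence $T_{max}\le T_*<\infty$, and if $Q(v,p)=\alpha_1\|\nabla v\|_2^2+\beta\|\gamma\nabla v-\nabla p\|_2^2$ stayed bounded as $t\uparrow T_{max}$ then by Sobolev embedding and the energy identity \eqref{231218} so would $\|v\|_{n_1+1},\|p\|_{n_2+1},\|v_t\|_2,\|p_t\|_2,\mathcal{H}$, and hence $\mathcal{G}$ --- contradicting the continuation of weak solutions. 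Therefore $\limsup_{t\to T_{max}^-}(\alpha_1\|\nabla v\|_2^2+\beta\|\gamma\nabla v-\nabla p\|_2^2)=+\infty$.

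I expect the two delicate points to be: the invariance step above, since under only positive initial energy the potential-well dichotomy is unavailable, so the precise constant $\tfrac{\hat c-2}{2(\hat c+2)}$ defining $\mathcal{M}$ must be calibrated against the Sobolev/Nehari threshold so that $E(t)>\mathcal{M}$ persists and $\mathcal{M}$ is absorbed into $\mathcal{H}$; and the control of the nonlinear damping terms $\int_\Omega|v_t|^{m_i-1}v_t\,v\,dx$ for $m_i>1$, which requires Young's inequality with an $\mathcal{H}(t)$-calibrated weight and works exactly because $n_i>m_i$, with Assumption \ref{ass}$(3)$ keeping all the relevant embeddings in force.
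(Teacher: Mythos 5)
Your proposal is correct and, at its core, runs on the same engine as the paper's proof: the Georgiev--Todorova functional $\mathcal H^{1-\sigma}(t)+\varepsilon\big(\rho\int_\Omega vv_t\,dx+\mu\int_\Omega pp_t\,dx\big)$, the absorption of the damping integrals by Young's inequality with an $\mathcal H$-calibrated weight (which is exactly where $n_i>m_i$ enters, as in \eqref{261057}--\eqref{261058} of the paper), the absorption of the constant $\mathcal M$ using $E(t)>y_0$ and the calibration $\mathcal M=\frac{\hat c-2}{2(2+\hat c)}y_0$, and the resulting ODE inequality $\mathcal G'\ge\kappa'\mathcal G^{1/(1-\sigma)}$ as in \eqref{nODE}. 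The one genuinely different ingredient is the persistence step $E(t)>y_0$. The paper obtains it analytically: from $\mathcal E(t)\ge\Psi(E(t))$ with $\Psi$ concave and maximized at $y_0$, the monotonicity of $\mathcal E$ and continuity of $E$ trap $E(t)\ge y_1>y_0$, where $\Psi(y_1)=\mathcal E(0)$. You obtain it geometrically: the hypotheses force $I(v_0,p_0)<0$, the sublevel set $\{I<0\}$ is forward invariant below the Nehari level (a first crossing time would put the trajectory on $\mathcal N$ with $\mathcal J\ge\frac{\hat c-2}{\hat c}y_0>\mathcal M\ge\mathcal E$, a contradiction), and on $\{I<0\}$ the Sobolev bound forces $Q>s_0=2y_0$, hence $E>y_0$. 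Your calibration checks out because the root $s_0$ of $\hat C(s^{(n_1-1)/2}+s^{(n_2-1)/2})=1$ coincides with $2y_0$ from \eqref{b2}, so the two routes deliver the same conclusion; yours is closer in spirit to the potential-well dichotomy and makes Corollary \ref{cor1} nearly automatic, while the paper's $\Psi$-argument avoids discussing invariance of $\{I<0\}$ altogether. Two minor points to tighten: at the putative crossing time you should note $Q(t_1)\ge s_0>0$ rules out $(v(t_1),p(t_1))=(0,0)$ before invoking membership in $\mathcal N$; and the identity for $\rho\int_\Omega vv_{tt}\,dx+\mu\int_\Omega pp_{tt}\,dx$ cannot be obtained by literally testing with $v,p$ since weak solutions lack the regularity for $v_{tt},p_{tt}$ --- it must be justified, as the paper does in \eqref{6-10}--\eqref{6-9}, by differentiating the absolutely continuous function $N'(t)$ obtained from the weak formulation.
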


\begin{corollary}\label{cor1}
Let Assumption $\ref{ass}(3)$ hold. Assume $n_1>m_1$, $n_2>m_2$, and
$$
0\leq\mathcal{E}(0)<\mathcal{M}:=\frac{\hat{c}-2}{2(2+\hat{c})}y_0.
$$
 If $(v_0,p_0)\in\mathcal{W}_2$, then the weak solution $(v,p)$ of problem \eqref{1.1} blows up in
finite time.
\end{corollary}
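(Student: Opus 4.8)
The plan is to derive Corollary \ref{cor1} directly from Theorem \ref{thm6-2}. The two statements share all of their hypotheses except one: Theorem \ref{thm6-2} requires $E(0)>y_0$, whereas the corollary replaces this by $(v_0,p_0)\in\mathcal{W}_2$ (the condition $0\le\mathcal{E}(0)<\mathcal{M}$ being common to both). So the whole task reduces to showing that, under $0\le\mathcal{E}(0)<\mathcal{M}$, membership of the initial data in $\mathcal{W}_2$ already forces $E(0)>y_0$; once that is in hand, Theorem \ref{thm6-2} applies verbatim and delivers $\limsup_{t\to T_{\max}^-}\big(\alpha_1\|\nabla v\|_2^2+\beta\|\gamma\nabla v-\nabla p\|_2^2\big)=+\infty$ for some finite $T_{\max}>0$.

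To establish the implication $(v_0,p_0)\in\mathcal{W}_2\Rightarrow E(0)>y_0$, I would set $B_0:=\alpha_1\|\nabla v_0\|_2^2+\beta\|\gamma\nabla v_0-\nabla p_0\|_2^2$, which, because $\alpha_1,\beta>0$, is equivalent to $\|\nabla v_0\|_2^2+\|\nabla p_0\|_2^2$. Since $1<n_1,n_2\le5$, I can invoke the Sobolev embeddings $H_{\Gamma_0}^1(\Omega)\hookrightarrow L^{n_i+1}(\Omega)$ and combine them with that norm equivalence to produce constants $\kappa_1,\kappa_2>0$ with
\[
\|v_0\|_{n_1+1}^{n_1+1}+\|p_0\|_{n_2+1}^{n_2+1}\le\kappa_1B_0^{\frac{n_1+1}{2}}+\kappa_2B_0^{\frac{n_2+1}{2}}.
\]
Since $(v_0,p_0)\in\mathcal{W}_2$ means $B_0<\|v_0\|_{n_1+1}^{n_1+1}+\|p_0\|_{n_2+1}^{n_2+1}$, dividing by $B_0>0$ gives $1<\kappa_1B_0^{\frac{n_1-1}{2}}+\kappa_2B_0^{\frac{n_2-1}{2}}$. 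The map $b\mapsto\kappa_1b^{\frac{n_1-1}{2}}+\kappa_2b^{\frac{n_2-1}{2}}$ is continuous, strictly increasing on $(0,\infty)$, and vanishes at $0$ (both exponents $\frac{n_i-1}{2}$ being positive), so $B_0$ must strictly exceed the unique positive root $b_*$ of $\kappa_1b^{\frac{n_1-1}{2}}+\kappa_2b^{\frac{n_2-1}{2}}=1$. Then $E(0)\ge\tfrac12B_0>\tfrac12b_*$, and since $y_0$ in \eqref{b2} is calibrated precisely so that $y_0\le\tfrac12b_*$, I obtain $E(0)>y_0$, completing the reduction.

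The hard part will be the final comparison of constants: one must check that the embedding/equivalence constants $\kappa_1,\kappa_2$ used above are the same ones (or at least compatible with the ones) entering the definition of $y_0$ in \eqref{b2}, so that the threshold $\tfrac12b_*$ forced by the $\mathcal{W}_2$ condition genuinely dominates $y_0$. The rest is routine — the monotonicity pinning $B_0$ from below is immediate once $n_1,n_2>1$, and the conclusion is then a verbatim application of Theorem \ref{thm6-2}.
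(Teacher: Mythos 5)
Your proposal is correct and follows essentially the same route as the paper: reduce to Theorem \ref{thm6-2} by showing that $(v_0,p_0)\in\mathcal{W}_2$ forces $E(0)>y_0$, via the embedding bound on $\|v_0\|_{n_1+1}^{n_1+1}+\|p_0\|_{n_2+1}^{n_2+1}$ and monotonicity of $b\mapsto \kappa_1 b^{\frac{n_1-1}{2}}+\kappa_2 b^{\frac{n_2-1}{2}}$. The "hard part" you flag is in fact a non-issue: the paper's estimate \eqref{171608} uses exactly the constant $\hat{C}$ appearing in the defining equation \eqref{b2} for $y_0$ (so $\kappa_1=\kappa_2=\hat{C}$), whence your root $b_*$ equals $2y_0$ precisely and $E(0)\ge\tfrac12 B_0>\tfrac12 b_*=y_0$.
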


For $g_1(v_t)=v_t$ and $g_2(p_t)=p_t$, we give the following blow-up
result for arbitrarily positive initial energy.
\begin{theorem}[\textbf{Blow-up in finite time for arbitrarily positive initial energy}]\label{thm2.10}
Let Assumption $\ref{ass}(3)$ hold, and $n_1>1,~n_2>1,~m_1=m_2=1$.
Suppose
\[0<\mathcal{E}(0)<\frac{\bar{C}}{\hat{c}}\left(\rho\int_\Omega
v_0v_1dx+\mu\int_\Omega p_0p_1dx\right),\] then   the weak solution
$(v,p)$ of problem \eqref{1.1} blows up in finite time
% In other words,
%$
%\limsup_{t \to T_{max}^-} (\alpha_1\|\nabla v\|_2^2+\beta\|\gamma\nabla v-\nabla p\|_2^2)= +\infty,
%$
%for some $0<T_{max}<\infty$,
where $\bar{C}$ is the positive constant shown in \eqref{8168}.

In addition, if
\[\mathcal{E}(0)\le \frac{\hat{c}-2}{2\hat{c}}\Big(\max\Big\{\frac{2\gamma^2+1}{\alpha_1},\frac2\beta\Big\}\Big)^{-1}c^2\min\Big\{\frac1\rho,\frac1\mu\Big\}(\|v_0\|_2^2+\|p_0\|_2^2),\]
then the blow-up time $T_{max}$ can be estimated from above as
follows
\[T_{max}\le \frac{2(\rho\|v_0\|_2^2+\mu\|p_0\|_2^2+\kappa\tau^2)}{(\hat{c}-2)\Big[\rho\displaystyle\int_\Omega v_0v_1dx+\mu\int_\Omega p_0p_1dx+\kappa\tau\Big]-2(\|v_0\|_2^2+\|p_0\|_2^2)},\]
where $\kappa$ and $\tau$ are shown in \eqref{81616} and
\eqref{8160}, respectively.
\end{theorem}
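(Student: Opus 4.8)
The plan is to argue by the concavity (Levine) method. First I would record the basic differential identity obtained by testing the two weak equations with $v$ and $p$, adding, and using that $g_1(v_t)=v_t$, $g_2(p_t)=p_t$:
\[
\rho\int_\Omega v v_{tt}\,dx+\mu\int_\Omega p p_{tt}\,dx=-I(t)-\int_\Omega v_tv\,dx-\int_\Omega p_tp\,dx,
\]
where $I(t):=\alpha_1\|\nabla v\|_2^2+\beta\|\gamma\nabla v-\nabla p\|_2^2-\|v\|_{n_1+1}^{n_1+1}-\|p\|_{n_2+1}^{n_2+1}=\langle\mathcal J'(v,p),(v,p)\rangle$ is the Nehari functional. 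Then I would work with the Kalantarov--Ladyzhenskaya-type functional
\[
\Phi(t):=\rho\|v(t)\|_2^2+\mu\|p(t)\|_2^2+\int_0^t\big(\|v(s)\|_2^2+\|p(s)\|_2^2\big)ds+(T_0-t)\big(\|v_0\|_2^2+\|p_0\|_2^2\big)+\kappa(t+\tau)^2
\]
on $[0,T_0]$, with $T_0,\kappa,\tau>0$ to be fixed. The time integral is included precisely so that in $\Phi''$ the cross terms $-\int_\Omega v_tv-\int_\Omega p_tp$ coming from the linear dampings cancel, leaving the clean identity $\Phi''(t)=2\big(\rho\|v_t\|_2^2+\mu\|p_t\|_2^2\big)-2I(t)+2\kappa$; the term $(T_0-t)(\|v_0\|_2^2+\|p_0\|_2^2)$ makes the Cauchy--Schwarz estimate $(\Phi'(t))^2\le 4\Phi(t)\big(\rho\|v_t\|_2^2+\mu\|p_t\|_2^2+\kappa+\int_0^t(\|v_t\|_2^2+\|p_t\|_2^2)ds\big)$ exact; and $\kappa(t+\tau)^2$ is the device that copes with a \emph{strictly positive} $\mathcal E(0)$.

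The key analytic input is the pointwise lower bound on $-I(t)$. Using $\mathcal E(t)=E(t)-\tfrac1{n_1+1}\|v\|_{n_1+1}^{n_1+1}-\tfrac1{n_2+1}\|p\|_{n_2+1}^{n_2+1}$, the inequality $\hat c=\min\{n_1+1,n_2+1\}>2$, the energy identity $\int_0^t(\|v_t\|_2^2+\|p_t\|_2^2)ds=\mathcal E(0)-\mathcal E(t)$, and $\mathcal E(t)\le\mathcal E(0)$, one gets
\[
-I(t)\ \ge\ \tfrac{\hat c}{2}\big(\rho\|v_t\|_2^2+\mu\|p_t\|_2^2\big)+\tfrac{\hat c-2}{2}\big(\alpha_1\|\nabla v\|_2^2+\beta\|\gamma\nabla v-\nabla p\|_2^2\big)+\hat c\!\int_0^t\!\big(\|v_t\|_2^2+\|p_t\|_2^2\big)ds-\hat c\,\mathcal E(0).
\]
Substituting this into $\Phi''$ makes it transparent that the ``new auxiliary functional related to the Nehari functional in an increasing sense'' is $\tfrac12\Phi'(t)$: since $\tfrac12\Phi''(t)=\rho\|v_t\|_2^2+\mu\|p_t\|_2^2-I(t)+\kappa\ge0$ once $\kappa\ge\hat c\,\mathcal E(0)$, the functional $\tfrac12\Phi'$ is nondecreasing, and it starts positive because the hypothesis $0<\mathcal E(0)<\tfrac{\bar C}{\hat c}\big(\rho\int_\Omega v_0v_1+\mu\int_\Omega p_0p_1\big)$ (with $\bar C>0$ from \eqref{8168}) forces $\rho\int_\Omega v_0v_1+\mu\int_\Omega p_0p_1>0$, so that $\Phi'(0)=2\big(\rho\int_\Omega v_0v_1+\mu\int_\Omega p_0p_1+\kappa\tau\big)>0$. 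Feeding the lower bound for $-I(t)$ into the Cauchy--Schwarz estimate and taking $\sigma=\tfrac{\hat c-2}{4}$, a bookkeeping of the coefficients — which closes only because $\hat c>2$, i.e. $1<n_1,n_2<5$ as in Remark \ref{rmk-p} — yields $\Phi(t)\Phi''(t)-(1+\sigma)(\Phi'(t))^2\ge0$, so $(\Phi^{-\sigma})''\le0$.

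Since $\Phi>0$, $\Phi(0)>0$ and $\Phi'(0)>0$, the positive concave function $\Phi^{-\sigma}$ obeys $\Phi^{-\sigma}(t)\le\Phi(0)^{-\sigma}-\sigma\Phi(0)^{-\sigma-1}\Phi'(0)\,t$, hence vanishes at some finite $T^{*}\le\Phi(0)/(\sigma\Phi'(0))$, i.e. $\Phi(t)\to+\infty$ as $t\to T^{*-}$. Because $\rho\|v\|_2^2+\mu\|p\|_2^2\le C\big(\alpha_1\|\nabla v\|_2^2+\beta\|\gamma\nabla v-\nabla p\|_2^2\big)$ by Poincar\'e's inequality \eqref{2.1} and the equivalence of norms, while the remaining terms of $\Phi$ stay bounded on bounded time intervals, this forces $\limsup_{t\to T_{max}^-}\big(\alpha_1\|\nabla v\|_2^2+\beta\|\gamma\nabla v-\nabla p\|_2^2\big)=+\infty$ for some $0<T_{max}\le T^{*}<\infty$, the solution being non-continuable past such a time by the local existence Theorem \ref{thm2.2}. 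For the explicit bound I would take the minimal admissible $\kappa$ and the $\tau$ of \eqref{81616}--\eqref{8160}, so $\Phi'(0)=2\big(\rho\int_\Omega v_0v_1+\mu\int_\Omega p_0p_1+\kappa\tau\big)$ and $\Phi(0)=\rho\|v_0\|_2^2+\mu\|p_0\|_2^2+T_0(\|v_0\|_2^2+\|p_0\|_2^2)+\kappa\tau^2$; the estimate $T^{*}\le\Phi(0)/(\sigma\Phi'(0))$ is usable only when $T^{*}\le T_0$, and choosing $T_0$ as small as allowed by this self-consistency — which is possible exactly because the second hypothesis $\mathcal E(0)\le\tfrac{\hat c-2}{2\hat c}\big(\max\{\tfrac{2\gamma^2+1}{\alpha_1},\tfrac2\beta\}\big)^{-1}c^2\min\{\tfrac1\rho,\tfrac1\mu\}(\|v_0\|_2^2+\|p_0\|_2^2)$ guarantees $(\hat c-2)\big(\rho\int_\Omega v_0v_1+\mu\int_\Omega p_0p_1+\kappa\tau\big)>2(\|v_0\|_2^2+\|p_0\|_2^2)$ — gives precisely
\[
T_{max}\ \le\ \frac{2\big(\rho\|v_0\|_2^2+\mu\|p_0\|_2^2+\kappa\tau^2\big)}{(\hat c-2)\big[\rho\int_\Omega v_0v_1+\mu\int_\Omega p_0p_1+\kappa\tau\big]-2\big(\|v_0\|_2^2+\|p_0\|_2^2\big)}.
\]

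The main obstacle throughout is the strict positivity of $\mathcal E(0)$: the potential-well dichotomy of Section \ref{sec4} is unavailable, so the sign of the Nehari functional cannot be controlled a priori and the concavity inequality produces a residual proportional to $\mathcal E(0)$ with the wrong sign. What rescues the argument is the monotonicity of $\tfrac12\Phi'$ — itself a consequence of the linear dampings (cross-term cancellation) and of $\hat c>2$ (the favorable coefficient $\tfrac{\hat c-2}{2}$ in the bound for $-I(t)$) — together with the two quantitative hypotheses, which are calibrated so that along the trajectory the quadratic elastic energy $\alpha_1\|\nabla v\|_2^2+\beta\|\gamma\nabla v-\nabla p\|_2^2$ (and the accumulated dissipation $\int_0^t(\|v_t\|_2^2+\|p_t\|_2^2)ds$) grow enough to absorb that residual. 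Verifying this domination rigorously — i.e. that the nonnegative terms $(\hat c-2)\big(\alpha_1\|\nabla v\|_2^2+\beta\|\gamma\nabla v-\nabla p\|_2^2\big)$ and $(\hat c-2)\int_0^t(\|v_t\|_2^2+\|p_t\|_2^2)ds$ stay above the constant multiple of $\mathcal E(0)$ on the relevant time interval — is the delicate step and is exactly where the precise choices of $\kappa$, $\tau$, $\sigma$ and the two smallness conditions enter.
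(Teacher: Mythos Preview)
Your approach collapses the paper's two-step argument into a single concavity computation, and that is where the gap sits. The paper does \emph{not} use the Levine functional $\Phi$ (your $M$) to prove the blow-up statement itself. Instead, it first proves Lemma~\ref{lem2.6}: setting $\mathcal F(t)=N'(t)-\tfrac{\hat c}{\bar C}\,\mathcal E(t)$, a direct differential inequality $\mathcal F'(t)\ge \bar C\,\mathcal F(t)$ gives $\mathcal F(t)\ge \mathcal F(0)e^{\bar C t}>0$. This is the ``new auxiliary functional related to the Nehari functional'' advertised in the introduction---not $\tfrac12\Phi'$. Blow-up then follows by contradiction: if the solution were global, the energy identity forces $\|v\|_2+\|p\|_2\le \|v_0\|_2+\|p_0\|_2+2\sqrt t\,\mathcal E(0)^{1/2}$, while integrating $\tfrac{d}{dt}(\rho\|v\|_2^2+\mu\|p\|_2^2)=2N'(t)\ge 2\mathcal F(0)e^{\bar C t}$ forces exponential growth. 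No concavity is used here, and the second hypothesis is not invoked.

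The concavity machinery enters only for the explicit upper bound on $T_{max}$, under the additional second hypothesis. Even there, the inequality $\eta(t)\ge 0$ in \eqref{81615} does not close by pure bookkeeping: it needs $(\hat c-2)(\alpha_1\|\nabla v\|_2^2+\beta\|\gamma\nabla v-\nabla p\|_2^2)\ge (\hat c-2)C(\|v_0\|_2^2+\|p_0\|_2^2)$ uniformly in $t$, and the paper obtains this from Poincar\'e together with the monotonicity $\rho\|v\|_2^2+\mu\|p\|_2^2\ge \rho\|v_0\|_2^2+\mu\|p_0\|_2^2$---which is \eqref{81611}, itself a corollary of Lemma~\ref{lem2.6}. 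Your monotonicity of $\tfrac12\Phi'$ does not give this: $\Phi'$ contains the extra terms $2\kappa(t+\tau)$ and $\|v\|_2^2+\|p\|_2^2-\|v_0\|_2^2-\|p_0\|_2^2$, so $\Phi'\ge\Phi'(0)$ does not force $N'(t)\ge 0$. Your final paragraph correctly identifies the missing domination, but the mechanism you hint at (choices of $\kappa,\tau,\sigma$) cannot supply a $t$-uniform lower bound on the elastic energy; you need an independent dynamical input like Lemma~\ref{lem2.6}.
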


\section{Local solutions}\label{sec3}
This section is used to prove the local existence in Theorem
\ref{thm2.2}.

Define the Hilbert space
\[\mathcal{H}=H_{\Gamma_0}^1(\Omega)\times L^2(\Omega) \times H_{\Gamma_0}^1(\Omega)\times L^2(\Omega)\]
with
\begin{align*}
\langle(v,u,p,q),(\bar{v},\bar{u},\bar{p},\bar{q},)\rangle_{\mathcal{H}}&=\int_\Omega \alpha_1\nabla v\nabla \bar{v}dx+\int_\Omega \rho u\bar{u}dx\notag\\
&\quad+\int_\Omega\beta(\gamma\nabla v-\nabla p)(\gamma\nabla
\bar{v}-\nabla \bar{p})dx+\int_\Omega\mu q\bar{q}dx,
\end{align*}
and the norm
\begin{equation*}
\label{12161224}
\begin{split}
\|(v,u,p,q)\|^2_{\mathcal{H}}=\alpha_1\|\nabla
v\|_2^2+\beta\|\gamma\nabla v-\nabla
p\|_2^2+\rho\|u\|_2^2+\mu\|q\|_2^2.
\end{split}
\end{equation*}

Define the nonlinear operator  $\mathscr{A}$ in $\mathcal{H}$ by
\begin{equation*}
\mathscr{A}\left(\begin{array}{c}v \\u \\p \\q
\end{array}\right)^T=\left(\begin{array}{c}-u
\\\frac{1}{\rho}\left[-\alpha\Delta v+\gamma\beta\Delta
p+g_1(u)-f_1(v)\right] \\-q \\\frac{1}{\mu}\left[-\beta \Delta
p+\gamma\beta\Delta v+g_2(q)-f_2(p)\right] \end{array}\right)^T
\end{equation*}
with domain
\begin{equation*}
\begin{split}
\mathcal{D}(\mathscr{A})=\Big\{(v,u,p,q)\in
H_{\Gamma_0}^1(\Omega)\times H_{\Gamma_0}^1(\Omega)\times
H_{\Gamma_0}^1(\Omega)\times
H_{\Gamma_0}^1(\Omega);\\
g_1(u),~g_2(q)\in
(H_{\Gamma_0}(\Omega))'\cap L^1(\Omega)
\\ 
-\alpha\Delta v+\gamma\beta\Delta p+g_1(u)-f_1(v)\in L^2(\Omega),~-\beta \Delta p+\gamma\beta\Delta v+g_2(q)-f_2(p) \in L^2(\Omega)\\  
\alpha  \frac{\partial v}{\partial \vec{n}}-\gamma\beta\frac{\partial p}{\partial \vec{n}}=\beta \frac{\partial p}{\partial \vec{n}}-\gamma\beta\frac{\partial v}{\partial \vec{n}}=0\text{ on } \Gamma_1\Big\}.
\end{split}
\end{equation*}
Let $U(t)=(v,v_t,p,p_t)$ and $U_0=(v_0,v_1,p_0,p_1)$, then problem
\eqref{1.1}  can be easily reformulated to the following abstract
evolution equation
\begin{equation}
\label{3.1}
\begin{cases}
\frac{dU(t)}{dt}+\mathscr{A}U(t)=0,\quad t>0,\\
U(0)=U_0.
\end{cases}
\end{equation}

\subsection{Globally Lipschitz sources}
\begin{lemma}
\label{lem3.1} For $i=1,2$, assume that
\begin{enumerate}[$(1)$]
  \item $g_i$ are two continuous and monotone increasing functions satisfying $g_i(0)=0$;
  \item $f_i:H_{\Gamma_0}^1(\Omega)\to L^2(\Omega)$ are globally Lipschitz.
\end{enumerate}
Then, problem \eqref{3.1} admits a unique global strong solution
$U\in W^{1,\infty}(0,T;\mathcal{H})$ for arbitrary $T>0$, provided
the initial datum $U_0\in \mathcal{D}(\mathscr{A})$.
\end{lemma}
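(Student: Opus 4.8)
The plan is to show that the operator $\mathscr{A}$ is $m$-accretive on $\mathcal H$ when the sources $f_i$ are globally Lipschitz, and then to invoke the Komura–Kato nonlinear semigroup generation theorem. Concretely, I would write $\mathscr{A} = \mathscr{A}_0 + \mathscr{B}$, where $\mathscr{A}_0$ collects the (monotone) principal part together with the damping terms $g_i$, and $\mathscr{B}$ is the bounded perturbation coming from the source terms $-f_1(v)$, $-f_2(p)$. For $\mathscr{A}_0$ the accretiveness is the routine energy computation: for $U=(v,u,p,q)\in\mathcal D(\mathscr A)$ one checks, using Green's formula and the Neumann-type boundary conditions built into $\mathcal D(\mathscr A)$ (which make the boundary integrals vanish), that $\langle \mathscr{A}_0 U, U\rangle_{\mathcal H} = \int_\Omega g_1(u)u\,dx + \int_\Omega g_2(q)q\,dx \ge 0$ since each $g_i$ is monotone with $g_i(0)=0$. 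Because $\mathscr B$ is globally Lipschitz on $\mathcal H$ (here one uses $f_i:H^1_{\Gamma_0}\to L^2$ globally Lipschitz, so the induced Nemytskii-type map on $\mathcal H$ has a global Lipschitz constant $L$), the shifted operator $\mathscr{A}+L\,\mathrm{Id}$ is accretive.

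The main obstacle is the range (maximality) condition: I must show $\mathrm{Range}(\mathrm{Id}+\lambda\mathscr{A}) = \mathcal H$ for some (equivalently all small) $\lambda>0$, i.e. solvability of the stationary system
\begin{align*}
v - \lambda u &= f^{(1)},\\
\rho u + \lambda\bigl[-\alpha\Delta v + \gamma\beta\Delta p + g_1(u) - f_1(v)\bigr] &= f^{(2)},\\
p - \lambda q &= f^{(3)},\\
\mu q + \lambda\bigl[-\beta\Delta p + \gamma\beta\Delta v + g_2(q) - f_2(p)\bigr] &= f^{(4)},
\end{align*}
in $\mathcal D(\mathscr A)$ for arbitrary data in $\mathcal H$. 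Eliminating $u = (v-f^{(1)})/\lambda$ and $q = (p-f^{(3)})/\lambda$, this reduces to an elliptic system for $(v,p)\in H^1_{\Gamma_0}\times H^1_{\Gamma_0}$ of the form $\mathcal L(v,p) + \mathcal G(v,p) = (\text{data})$, where $\mathcal L$ is the coupled second-order operator $(v,p)\mapsto(-\alpha\Delta v+\gamma\beta\Delta p,\,-\beta\Delta p+\gamma\beta\Delta v)$ plus the zeroth-order terms $\rho v/\lambda^2$, $\mu p/\lambda^2$, and $\mathcal G(v,p) = \bigl(\tfrac1\lambda g_1((v-f^{(1)})/\lambda),\,\tfrac1\lambda g_2((p-f^{(3)})/\lambda)\bigr)$ is a monotone (possibly multivalued-in-spirit, but here single-valued continuous) lower-order term, and the Lipschitz $f_i(v),f_i(p)$ can be moved to the data side after a further fixed-point/continuation argument or absorbed since they are Lipschitz with small coefficient $\lambda$. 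The bilinear form associated with $\mathcal L$ is coercive on $\mathcal V = H^1_{\Gamma_0}\times H^1_{\Gamma_0}$ — this is exactly where the structural positivity $\alpha_1 = \alpha - \gamma^2\beta>0$ enters, giving $\alpha_1\|\nabla v\|_2^2 + \beta\|\gamma\nabla v - \nabla p\|_2^2$ as the quadratic form, which controls $\|\nabla v\|_2^2 + \|\nabla p\|_2^2$ — so the surjectivity follows from the theory of monotone coercive operators (Browder–Minty), after checking that $\mathcal G$ is well-defined and monotone from $\mathcal V$ into $\mathcal V'$; the growth hypotheses on $g_i$ in Assumption \ref{ass}(1) together with the subcriticality $m_i$ arising from Sobolev embeddings guarantee this. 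One subtlety to handle carefully is the $L^1$-regularity of $g_i(u),g_i(q)$ demanded in $\mathcal D(\mathscr A)$: this is obtained a posteriori from the equation, multiplying by a truncation of the solution as in Brézis's technique, which I would cite from \cite{B1993}.

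Once $m$-accretiveness of $\mathscr{A}+L\,\mathrm{Id}$ is established, the nonlinear semigroup generation theorem yields, for $U_0\in\mathcal D(\mathscr A)$, a unique strong solution $U\in W^{1,\infty}(0,T;\mathcal H)$ of \eqref{3.1} on every finite interval $[0,T]$ (the Lipschitz source precludes finite-time blow-up, so the solution is global), with $U(t)\in\mathcal D(\mathscr A)$ and $\tfrac{dU}{dt}\in L^\infty(0,T;\mathcal H)$; uniqueness is immediate from accretiveness via the standard Gronwall estimate $\tfrac{d}{dt}\|U-\tilde U\|_{\mathcal H}^2 \le 2L\|U-\tilde U\|_{\mathcal H}^2$. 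This completes the proof.
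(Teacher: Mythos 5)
Your overall strategy --- accretivity of a shifted operator, the range condition via monotone operator theory, then the nonlinear semigroup generation theorem --- is exactly the paper's. The reduction of the resolvent equation to a coercive stationary system is also the same in substance: the paper eliminates $(v,p)$ in favour of $(u,q)$ rather than the reverse, which is immaterial since both substitutions are affine and preserve the monotone structure of the damping and the Lipschitz structure of the sources, and the role played by your small $\lambda$ in $\mathrm{Id}+\lambda\mathscr{A}$ is played in the paper by a large $\lambda$ in $\mathscr{A}+\lambda\mathcal{I}$.

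There is, however, one step that would fail as written. You justify that the damping part $\mathcal G$ is ``well-defined and monotone from $\mathcal V$ into $\mathcal V'$'' by appealing to ``the growth hypotheses on $g_i$ in Assumption \ref{ass}(1)''. But Lemma \ref{lem3.1} assumes only that each $g_i$ is continuous, monotone increasing and vanishes at the origin --- no growth condition at all, and the lemma must hold at this level of generality because it is later applied with only a one-sided lower bound on $g_i(s)s$. For, say, $g_1(s)=e^{s^3}-1$ the Nemytskii map does not send $H^1_{\Gamma_0}(\Omega)$ into $(H_{\Gamma_0}(\Omega))'\cap L^1(\Omega)$, so the damping operator is not everywhere defined on $\mathcal V$ and the everywhere-defined Browder--Minty theorem does not apply to it. The paper's fix is to realize the damping as the subdifferential $\partial J_i$ of the convex integral $J_i(u)=\int_\Omega\int_0^{u}g_i(t)\,dt\,dx$, which is maximal monotone with the restricted domain $\{u:\,g_i(u)\in (H_{\Gamma_0}(\Omega))'\cap L^1(\Omega)\}$ (citing \cite{B2012}), and then to invoke the surjectivity of the sum of a maximal monotone operator and a coercive, everywhere-defined, hemicontinuous monotone operator via \cite[Corollary 1.2]{B1993}. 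Your closing remark about recovering the $L^1$-regularity ``a posteriori \dots as in Br\'ezis's technique'' points in the right direction but does not substitute for this: the maximal monotonicity of the domain-restricted damping operator has to be established before surjectivity can be concluded, not after.
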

\begin{proof}
It suffices to prove that the operator $\mathscr{A}+\omega
\mathcal{I}$ is $m-$accretive for some positive $\omega$, which can
be divided into two steps.

\textbf{Step 1}: $\mathscr{A}+\omega \mathcal{I}$ is accretive for
some positive $\omega$.

For any $U=(v,u,p,q),~\hat{U}=(\hat{v},\hat{u},\hat{p},\hat{q})\in
\mathcal{D}(\mathscr{A})$, integration by parts directly implies
that
\begin{align}
\label{12111939}
&\langle (\mathscr{A}+\omega \mathcal{I})U-(\mathscr{A}+\omega \mathcal{I})\hat{U},U-\hat{U}\rangle_{\mathcal{H}}=\langle \mathscr{A}U-\mathscr{A}\hat{U},U-\hat{U}\rangle_{\mathcal{H}}+\omega\|U-\hat{U}\|_{\mathcal{H}}^2\notag\\
&=-\int_\Omega \alpha_1\nabla (u-\hat{u})\nabla (v-\hat{v})dx\notag\\
&\quad+\int_\Omega \rho \frac{1}{\rho}\left[-\alpha\Delta (v-\hat{v})+\gamma\beta\Delta (p-\hat{p})+g_1(u)-g_1(\hat{u})-f_1(v)+f_1(\hat{v})\right] (u-\hat{u})dx\notag\\
&\quad-\int_\Omega\beta[\gamma\nabla (u-\hat{u})-\nabla (q-\hat{q})][\gamma\nabla (v-\hat{v})-\nabla (p-\hat{p})]dx\notag\\
&\quad+\int_\Omega\mu \frac{1}{\mu}\left[-\beta \Delta (p-\hat{p})+\gamma\beta\Delta (v-\hat{v})+g_2(q)-g_2(\hat{q})-f_2(p)+f_2(\hat{p})\right](q-\hat{q})dx\notag\\
&\quad+\omega\Big[\alpha_1\|\nabla (v-\hat{v})\|_2^2+ \rho \|u-\hat{u}\|_2^2+\beta\|\gamma\nabla (v-\hat{v})-\nabla (p-\hat{p})\|_2^2+\mu \|q-\hat{q}\|_2^2\Big]\notag\\
&=\int_\Omega(g_1(u)-g_1(\hat{u}))(u-\hat{u})dx+\int_\Omega (g_2(q)-g_2(\hat{q}))(q-\hat{q})dx\notag\\
&\quad-\int_\Omega (f_1(v)-f_1(\hat{v}))(u-\hat{u})dx-\int_\Omega(f_2(p)-f_2(\hat{p}))(q-\hat{q})dx\notag\\
&\quad+\omega\Big[\alpha_1\|\nabla (v-\hat{v})\|_2^2+ \rho
\|u-\hat{u}\|_2^2+\beta\|\gamma\nabla (v-\hat{v})-\nabla
(p-\hat{p})\|_2^2+\mu \|q-\hat{q}\|_2^2\Big].
\end{align}
Note that $g_1(u)-g_1(\hat{u}),~g_2(q)-g_2(\hat{q})\in
(H_{\Gamma_0}(\Omega))'\cap L^1(\Omega)$ and
$u-\hat{u},~q-\hat{q}\in H_{\Gamma_0}^{1}(\Omega)$ satisfying
\[(g_1(u)-g_1(\hat{u}))(u-\hat{u})\ge 0,\quad (g_2(q)-g_2(\hat{q}))(q-\hat{q})\ge 0,\]
then by \cite[Lemma 2.6]{B1993}, one has
$(g_1(u)-g_1(\hat{u}))(u-\hat{u})\in L^1(\Omega)$ and
$(g_2(q)-g_2(\hat{q}))(q-\hat{q})\in L^1(\Omega)$. In addition
\begin{equation}
\label{12110} \int_\Omega(g_1(u)-g_1(\hat{u}))(u-\hat{u})dx\ge 0,
\quad \int_\Omega(g_2(q)-g_2(\hat{q}))(q-\hat{q})dx\ge0.
\end{equation}

By utilizing Cauchy–Schwarz inequality and recalling that
$f_1,~f_2:~H_{\Gamma_0}^1(\Omega)\to L^2(\Omega)$ are globally
Lipschitz with Lipschitz constants $L_{f_1},~L_{f_2}$, respectively,
one obtains that
\begin{align}
\label{12111} &\int_\Omega (f_1(v)-f_1(\hat{v}))(u-\hat{u})dx\le
\|f_1(v)-f_1(\hat{v})\|_2\|u-\hat{u}\|_2 \notag\\&\quad\le
L_{f_1}\|\nabla v-\nabla\hat{v}\|_2\|u-\hat{u}\|_2 \le
\frac{L_{f_1}}{2}\|\nabla
(v-\hat{v})\|_2^2+\frac{1}{2}\|u-\hat{u}\|_2^2,
\end{align}
\begin{align}
\label{12112} &\int_\Omega(f_2(p)-f_2(\hat{p}))(q-\hat{q})dx\le
\frac{L_{f_2}
}{2}\|\nabla (p-\hat{p})\|_2^2+\frac{1}{2}\|q-\hat{q}\|_2^2\notag\\
&\quad\le  L_{f_2} \Big[\|\nabla (p-\hat{p})-\gamma\nabla
(v-\hat{v})\|_2^2+\|\gamma\nabla
(v-\hat{v})\|_2^2\Big]+\frac{1}{2}\|q-\hat{q}\|_2^2.
\end{align}
Combining \eqref{12111939} with \eqref{12110}-\eqref{12112} leads to
\begin{align*}
&\langle (\mathscr{A}+\omega \mathcal{I})U-(\mathscr{A}+\omega \mathcal{I})\hat{U},U-\hat{U}\rangle_{\mathcal{H}}\\
&\ge \Big(\omega\alpha_1-\frac{L_{f_1}}{2}-L_{f_2}
\gamma\Big)\|\nabla (v-\hat{v})\|_2^2+\Big(\omega\rho-\frac{1}{2}\Big)\|u-\hat{u}\|_2^2\\
&\quad +\Big(\omega\beta-L_{f_2} \Big)\|\nabla
(p-\hat{p})-\gamma\nabla
(v-\hat{v})\|_2^2+\Big(\omega\mu-\frac{1}{2}\Big)\|q-\hat{q}\|_2^2.
\end{align*}
Thus, for $\omega$ sufficiently large, this estimate allows us to
conclude that $\mathscr{A}+\omega \mathcal{I}$ is accretive.

\textbf{Step 2}: $\mathscr{A}+\lambda \mathcal{I}$ is $m$-accretive
for some positive $\lambda$.

It suffices to show that the range of $\mathscr{A}+\lambda
\mathcal{I}$ is $\mathcal{H}$ for some $\lambda>0$.  We need to show
that for any $(a,b,c,d)\in \mathcal{H}$, there exists  $(v,u,p,q)\in
\mathcal{D}(\mathscr{A})$ such that
\[(\mathscr{A}+\lambda \mathcal{I})(v,u,p,q)=(a,b,c,d),\]
that is
\begin{equation}
\label{3.2}
\begin{cases}
-u+\lambda v= a,\\
\frac{1}{\rho}\left[-\alpha\Delta v+\gamma\beta\Delta p+g_1(u)-f_1(v)\right]+\lambda u=b,\\
-q+\lambda p=c,\\
\frac{1}{\mu}\left[-\beta \Delta p+\gamma\beta\Delta
v+g_2(q)-f_2(p)\right]+\lambda q=d.
\end{cases}
\end{equation}
Note that \eqref{3.2} is equivalent to
\begin{equation}
\label{3.3}
\begin{cases}
-\frac\alpha\lambda \Delta u+\frac{\gamma\beta}{\lambda}\Delta q+g_1(u)-f_1(\frac{a+u}{\lambda})+\lambda u=\rho b+\frac\alpha\lambda\Delta a-\frac{\gamma\beta}{\lambda} \Delta c,\\
-\frac\beta\lambda \Delta q+\frac{\gamma\beta}{\lambda}\Delta
u+g_2(q)-f_2(\frac{q+c}{\lambda})+\lambda q=\mu
d+\frac\beta\lambda\Delta c-\frac{\gamma\beta}{\lambda} \Delta a.
\end{cases}
\end{equation}

Define the inner product in
$\mathcal{V}=H_{\Gamma_0}^{1}(\Omega)\times
H_{\Gamma_0}^{1}(\Omega)$ by
\begin{equation*}
\begin{split}
\langle(v,p),(\bar{v},\bar{p})\rangle_{\mathcal{V}}=\int_\Omega
\nabla v\nabla \bar{v}dx+\int_\Omega\nabla p\nabla \bar{p}dx,
\end{split}
\end{equation*}
and the norm
\begin{equation*}
\begin{split}
\|(v,p)\|^2_{\mathcal{V}}=\|\nabla v\|_2^2+\|\nabla p\|_2^2.
\end{split}
\end{equation*}

Notice that the right hand side of \eqref{3.3} belongs to
$\mathcal{V'}:=(H_{\Gamma_0}(\Omega))'\times
(H_{\Gamma_0}(\Omega))'$. Define the operator $\mathscr{B}:
\mathcal{D}(\mathscr{B})\subset \mathcal{V}\to \mathcal{V'}$ by
\begin{equation*}
\mathscr{B}\left(\begin{array}{c} u \\q
\end{array}\right)^T=\left(\begin{array}{c}-\frac\alpha\lambda
\Delta u+\frac{\gamma\beta}{\lambda}\Delta
q+g_1(u)-f_1(\frac{a+u}{\lambda})+\lambda u\\ -\frac\beta\lambda
\Delta q+\frac{\gamma\beta}{\lambda}\Delta
u+g_2(q)-f_2(\frac{q+c}{\lambda})+\lambda q \end{array}\right)^T
\end{equation*}
with domain
\begin{equation*}
\begin{split}
\mathcal{D}(\mathscr{B})=\Big\{(u,q)\in
\mathcal{V};~g_1(u),~g_2(q)\in (H_{\Gamma_0}(\Omega))'\cap
L^1(\Omega)\Big\}.
\end{split}
\end{equation*}
Thus, we need to prove that the operator $\mathscr{B}:
\mathcal{D}(\mathscr{B})\subset \mathcal{V}\to \mathcal{V'}$ is
surjective. By \cite[Corollary 1.2]{B1993}, it is enough to prove
that $\mathscr{B}$ is maximal monotone and coercive.

Let us decompose $\mathscr{B}$ as two operators:
 \begin{equation*}
\mathscr{B}_1\left(\begin{array}{c} u \\q
\end{array}\right)^T=\left(\begin{array}{c}-\frac\alpha\lambda
\Delta u+\frac{\gamma\beta}{\lambda}\Delta
q-f_1(\frac{a+u}{\lambda})+\lambda u\\ -\frac\beta\lambda \Delta
q+\frac{\gamma\beta}{\lambda}\Delta
u-f_2(\frac{q+c}{\lambda})+\lambda q \end{array}\right)^T
\end{equation*}
and
\begin{equation*}
\mathscr{B}_2\left(\begin{array}{c} u \\q
\end{array}\right)^T=\left(\begin{array}{c}g_1(u)\\
g_2(q)\end{array}\right)^T.
\end{equation*}
\textbf{$\mathscr{B}_1$ is maximal monotone and coercive}.  First,
$\mathcal{D}(\mathscr{B}_1)=\mathcal{V}$. Second,  for any
$V=(u,q)\in \mathcal{V}$ and $\hat{V}=(\hat{u},\hat{q})\in
\mathcal{V}$, one obtains that
\begin{align}
\label{1213}
&\langle\mathscr{B}_1V-\mathscr{B}_1\hat{V}, V-\hat{V}\rangle_{\mathcal{V'}\times\mathcal{V}}\notag\\
&=\frac\alpha\lambda \|\nabla(u-\hat{u})\|_2^2-\frac{2\gamma\beta}{\lambda}\int_\Omega\nabla(q-\hat{q})\nabla(u-\hat{u})dx-\int_\Omega [f_1(\frac{a+u}{\lambda})-f_1(\frac{a+\hat{u}}{\lambda})](u-\hat{u})dx\notag\\
&\quad +\lambda\|u-\hat{u}\|_2^2+\frac\beta\lambda \|\nabla(q-\hat{q})\|_2^2-\int_\Omega [f_2(\frac{c+q}{\lambda})-f_2(\frac{c+\hat{q}}{\lambda})](q-\hat{q})dx+\lambda\|q-\hat{q}\|_2^2\notag\\
&\ge \frac\alpha\lambda \|\nabla(u-\hat{u})\|_2^2-\frac{\gamma^2\beta}{\lambda\varepsilon}\|\nabla(u-\hat{u})\|_2^2-\frac{L}{\lambda}\|\nabla(u-\hat{u})\|_2\|u-\hat{u}\|_2\notag\\
&\quad +\lambda\|u-\hat{u}\|_2^2+\frac\beta\lambda
\|\nabla(q-\hat{q})\|_2^2-\frac{\beta\varepsilon}{\lambda}\|\nabla(q-\hat{q})\|_2^2-\frac{L}{\lambda}\|\nabla(q-\hat{q})\|_2\|q-\hat{q}\|_2+\lambda\|q-\hat{q}\|_2^2
\notag\\
&\ge \Big(\frac\alpha\lambda-\frac{\gamma^2\beta}{\lambda\varepsilon}-\frac{L^2}{2\varepsilon_1\lambda}\Big)\|\nabla(u-\hat{u})\|_2^2+\Big(\lambda-\frac{\varepsilon_1}{2\lambda}\Big)\|u-\hat{u}\|_2^2\notag\\
&\quad +\Big(\frac\beta\lambda-\frac{\beta
\varepsilon}{\lambda}-\frac{L^2}{2\varepsilon_2\lambda}\Big)
\|\nabla(q-\hat{q})\|_2^2+\Big(\lambda-\frac{\varepsilon_2}{2\lambda}\Big)\|q-\hat{q}\|_2^2.
\end{align}
Recall $\alpha_1=\alpha-\gamma^2\beta>0$, let us first choose
$0<\frac{\gamma^2\beta}{\alpha}<\varepsilon<1$, then
$\frac\alpha\lambda-\frac{\gamma^2\beta}{\lambda\varepsilon}>0$ and
$2\frac\beta\lambda-\frac{\beta \varepsilon}{\lambda}>0$. We choose
$\lambda$ sufficiently large such that
$\frac{L^2}{2\lambda(\frac\alpha\lambda-\frac{\gamma^2\beta}{\lambda\varepsilon})}<\varepsilon_1<2\lambda^2$
and $0<\frac{L^2}{2\lambda(\frac\beta\lambda-\frac{\beta
\varepsilon}{\lambda})}<\varepsilon_2<2\lambda^2$, therefore it
follows from \eqref{1213} that
\begin{equation*}
\langle\mathscr{B}_1V-\mathscr{B}_1\hat{V},
V-\hat{V}\rangle_{\mathcal{V'}\times\mathcal{V}}\ge
C\|V-\hat{V}\|_{\mathcal{V}}^2
\end{equation*}
with
\[C:=\max\Big\{\frac\alpha\lambda-\frac{\gamma^2\beta}{\lambda\varepsilon}-\frac{L^2}{2\varepsilon_1\lambda},\lambda-\frac{\varepsilon_1}{2\lambda},\frac\beta\lambda-\frac{\beta \varepsilon}{\lambda}-\frac{L^2}{2\varepsilon_2\lambda},\lambda-\frac{\varepsilon_2}{2\lambda}\Big\}>0,\]
which implies $\mathscr{B}_1$ is strongly monotone. It is direct to
see that the strong monotonicity indicates coercivity of
$\mathscr{B}_1$.

Next, let us verify $\mathscr{B}: \mathcal{V}\to \mathcal{V'}$ is
hemicontinuous. Since any linear operator is hemicontinuous, we
merely need to consider the nonlinear terms
$f_1(\frac{a+u}{\lambda}),~f_2(\frac{q+c}{\lambda})$. Obviously,
$f_1(\frac{a+u}{\lambda})$ and $f_2(\frac{q+c}{\lambda})$ are
continuous from $H_{\Gamma_0}^1(\Omega)$ to
$(H_{\Gamma_0}(\Omega))'$ due to
$f_1,~f_2:~H_{\Gamma_0}^1(\Omega)\to L^2(\Omega)$ are globally
Lipschitz. Thus, $\mathscr{B}_1$ is hemicontinuous. So,
$\mathscr{B}_1$ is maximal monotone.

We are now in a position to prove that
$\mathscr{B}_2:\mathcal{D}(\mathscr{B}_2)\subset \mathcal{V}\to
\mathcal{V'}$ is maximal monotone. First note that
$\mathcal{D}(\mathscr{B}_2)=\mathcal{D}(\mathscr{B})=\Big\{(u,q)\in
\mathcal{V};~g_1(u),~g_2(q)\in (H_{\Gamma_0}(\Omega))'\cap
L^1(\Omega)\Big\}$. Second, we need to study $g_1(u),~g_2(q)$. For
$g_1(u)$, let us define the functional $J_1:
H_{\Gamma_0}^1(\Omega)\to [0,\infty]$ by $J_1(u)=\int_\Omega
j_1(u)dx$, where $j_1:\mathbb{R}\to [0,+\infty)$ is a convex
function defined by $j_1(s)=\int_0^s g_1(t)dt$. It is clear that
$J_1$ is proper, convex, and lower semicontinuous. In addition,
\cite[Corollary 2.3]{B2012} illustrates that $\partial J_1:
H_{\Gamma_0}^1(\Omega)\to (H_{\Gamma_0}(\Omega))'$ satisfies
$\partial J_1(u)=\{z\in (H_{\Gamma_0}(\Omega))'\cap L^1(\Omega):
z=g_1(u)\text{ a.e. in } \Omega\}$. In other words,
$\mathcal{D}(\partial J_1)=\{u\in H_{\Gamma_0}^1(\Omega):g_1(u)\in
(H_{\Gamma_0}(\Omega))'\cap L^1(\Omega)\}$ and for all $u\in
\mathcal{D}(\partial J_1)$, $\partial J_1(u)$ is a singleton such
that $\partial J_1(u)=\{g_1(u)\}$. Since any subdifferential is
maximal monotone, it follows that the maximal monotonicity of the
operator $g_1(u):\mathcal{D}(\partial J_1)\subset
H_{\Gamma_0}^1(\Omega)\to (H_{\Gamma_0}(\Omega))'$. By the same
argument, we also get the maximal monotonicity of the operator
$g_2(q)$. By \cite[Proposition 7.1]{G2014}, we conclude that
$\mathscr{B}_2$ is maximal monotone.

At last, since $g_1$ and $g_2$ are monotone increasing functions
satisfying $g_1(0)=0$, $g_2(0)=0$, it follows that
$\langle\mathscr{B}_2 V,V \rangle\ge 0$ for $V=(u,q)$. Recall
$\mathscr{B}_1$ is coercive, thus
$\mathscr{B}=\mathscr{B}_1+\mathscr{B}_2$ is coercive. By
\cite[Corollary 1.2]{B1993}, we verify the surjectivity of
$\mathscr{B}$. Thus, we have proved that there exist
$(u,q)\in\mathcal{D}(\mathscr{B})\subset  \mathcal{V}$ satisfying
\eqref{3.3}. In addition, it follows from $\eqref{3.2}_1$ and
$\eqref{3.2}_3$ that
$(v,p)=(\frac{a+u}{\lambda},\frac{c+q}{\lambda})\in \mathcal{V}$.

It is not difficult to verify $(v,u,p,q)\in
\mathcal{D}(\mathscr{A})$.  Indeed, consider the weak
form of equation $\eqref{3.2}_2$, for any test function $\varphi \in
H^1_{\Gamma_0}(\Omega)$,
\begin{equation*}
\int_\Omega \bigl[ -\alpha \Delta v + \gamma\beta \Delta p + g_1(u)
- f_1(v) +{\color{blue}\rho \lambda u} \bigr] \varphi \, dx = \int_\Omega \rho b
\varphi \, dx.
\end{equation*}
Let us apply Green's formula, then
\begin{align}
&\alpha \int_\Omega \nabla v \cdot \nabla\varphi \, dx - \gamma\beta
\int_\Omega \nabla p \cdot \nabla\varphi \, dx
- \int_{\Gamma_1} \Big(\alpha \frac{\partial v}{\partial \vec{n}} - \gamma\beta \frac{\partial p}{\partial \vec{n}}\Big) \varphi \, dS \nonumber \\
&+ \int_\Omega \bigl[ g_1(u) - f_1(v) +{\color{blue}\rho \lambda u} \bigr] \varphi \,
dx = \int_\Omega \rho b \varphi \, dx. \label{1443}
\end{align}

On the other hand, because equation $\eqref{3.2}_2$ holds in the
distributional sense, for compactly supported test functions
$\varphi \in C_c^\infty(\Omega)$ the boundary integral vanishes and
we obtain
\begin{equation}\label{1444}
\alpha \int_\Omega \nabla v \cdot \nabla\varphi \, dx - \gamma\beta
\int_\Omega \nabla p \cdot \nabla\varphi \, dx + \int_\Omega \bigl[
g_1(u) - f_1(v) +{\color{blue}\rho \lambda u} \bigr] \varphi \, dx = \int_\Omega \rho
b \varphi \, dx.
\end{equation}
The space $C_c^\infty(\Omega)$ is dense in $H^1_{\Gamma_0}(\Omega)$,
and both sides of \eqref{1444} are continuous linear functionals on
$H^1_{\Gamma_0}(\Omega)$. Consequently, \eqref{1444} holds for all
$\varphi \in H^1_{\Gamma_0}(\Omega)$.

Comparing \eqref{1443} and \eqref{1444} for the same $\varphi \in
H^1_{\Gamma_0}(\Omega)$, we deduce that
\[
\int_{\Gamma_1} \Big(\alpha \frac{\partial v}{\partial \vec{n}} -
\gamma\beta \frac{\partial p}{\partial \vec{n}}\Big) \varphi \, dS =
0 \qquad \forall\, \varphi \in H^1_{\Gamma_0}(\Omega).
\]
Since the trace of $\varphi$ on $\Gamma_1$ can be chosen
arbitrarily, we have
\[
\Big(\alpha \frac{\partial v}{\partial \vec{n}} - \gamma\beta
\frac{\partial p}{\partial \vec{n}}\Big)= 0 \quad \text{a.e. on }
\Gamma_1.
\]
Repeating the same argument with equation $\eqref{3.2}_4$ gives the
second boundary condition
\[
 \beta \frac{\partial p}{\partial \vec{n}} - \gamma\beta \frac{\partial v}{\partial \vec{n}} = 0 \quad \text{a.e. on } \Gamma_1. \]

This completes the proof of Lemma \ref{lem3.1}.
\end{proof}

\subsection{Locally Lipschitz sources}
\begin{lemma}
\label{lem3.2} For $i=1,2$, assume that
\begin{enumerate}[$(1)$]
  \item $g_i$ are two continuous and monotone increasing functions satisfying $g_i(0)=0$ and $g_i(s)s\ge a|s|^{m_i+1}$ for all $|s|\ge 1$, where $a>0$ and $m_i\ge 1$;
  \item $f_i:H_{\Gamma_0}^1(\Omega)\to L^2(\Omega)$ are locally Lipschitz.
\end{enumerate}
Then, problem \eqref{3.1} admits a unique local strong solution
$U\in W^{1,\infty}(0,T;\mathcal{H})$ for arbitrary $T>0$, provided
the initial datum $U_0\in \mathcal{D}(\mathscr{A})$.
\end{lemma}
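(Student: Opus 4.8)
The plan is to reduce the locally Lipschitz situation to the globally Lipschitz one already handled in Lemma \ref{lem3.1} by truncating the sources in the phase space, and then to undo the truncation on a short time interval by means of an a priori energy estimate. For $K>0$ let $\pi_K : H^1_{\Gamma_0}(\Omega)\to H^1_{\Gamma_0}(\Omega)$ be the radial retraction onto the closed ball $\{w:\|\nabla w\|_2\le K\}$, i.e. $\pi_K(w)=w$ when $\|\nabla w\|_2\le K$ and $\pi_K(w)=Kw/\|\nabla w\|_2$ otherwise. Since $\pi_K$ is the metric projection onto a closed convex subset of a Hilbert space it is nonexpansive, and since $f_i$ is Lipschitz on bounded subsets of $H^1_{\Gamma_0}(\Omega)$ by hypothesis $(2)$, the composition $f_{i,K}:=f_i\circ\pi_K$ is globally Lipschitz from $H^1_{\Gamma_0}(\Omega)$ into $L^2(\Omega)$, and $\|f_{i,K}(w)\|_2\le\|f_i(0)\|_2+L_i(K)\|\nabla w\|_2$ for all $w$, where $L_i(K)$ is the Lipschitz constant of $f_i$ on the ball of radius $K$. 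Replacing $f_1,f_2$ by $f_{1,K},f_{2,K}$ in $\mathscr{A}$ produces an operator $\mathscr{A}_K$ whose domain is again $\mathcal{D}(\mathscr{A})$, so by Lemma \ref{lem3.1} the truncated Cauchy problem $U'+\mathscr{A}_KU=0$, $U(0)=U_0$, has a unique global strong solution $U_K\in W^{1,\infty}(0,T;\mathcal{H})$ for every $T>0$ whenever $U_0\in\mathcal{D}(\mathscr{A})$.

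Next I would run the a priori estimate. Writing $U_K=(v,v_t,p,p_t)$ and $y(t):=\|U_K(t)\|_{\mathcal{H}}^2$, the map $t\mapsto y(t)$ is Lipschitz (hence differentiable a.e.) and $\tfrac12 y'(t)=\langle U_K'(t),U_K(t)\rangle_{\mathcal{H}}=-\langle\mathscr{A}_KU_K(t),U_K(t)\rangle_{\mathcal{H}}$ for a.e. $t$. Using that $U_K(t)\in\mathcal{D}(\mathscr{A})$ for a.e. $t$, the boundary terms vanish and the second-order spatial terms cancel exactly as in Step 1 of the proof of Lemma \ref{lem3.1}, leaving, for a.e. $t$,
\begin{equation*}
\tfrac12 y'(t)=-\int_\Omega(g_1(v_t)v_t+g_2(p_t)p_t)\,dx+\int_\Omega(f_{1,K}(v)v_t+f_{2,K}(p)p_t)\,dx.
\end{equation*}
Discarding the nonnegative damping terms, using the bound on $\|f_{i,K}\|_2$, and noting that $\|\nabla v\|_2,\|v_t\|_2\le c_0\sqrt{y}$ and $\|\nabla p\|_2,\|p_t\|_2\le c_0\sqrt{y}$ for a structural constant $c_0$ (the estimate for $\|\nabla p\|_2$ following from $\|\nabla p\|_2\le\gamma\|\nabla v\|_2+\|\gamma\nabla v-\nabla p\|_2$ and the definition of $\|\cdot\|_{\mathcal{H}}$), one gets $y'(t)\le C(K)(1+y(t))$ and hence $y(t)\le(y(0)+1)e^{C(K)t}$ for all $t\ge0$. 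Since $y(0)=\|U_0\|_{\mathcal{H}}^2=2E(0)$ by \eqref{12161914}, I would then fix $K:=2c_0\sqrt{y(0)+1}$ and $T:=\frac{2\ln 2}{C(K)}>0$; on $[0,T]$ we have $y(t)\le 4(y(0)+1)$, so $\max\{\|\nabla v(t)\|_2,\|\nabla p(t)\|_2\}\le K$, the retraction $\pi_K$ acts as the identity along the solution, and therefore $f_{i,K}(v)=f_i(v)$, $f_{i,K}(p)=f_i(p)$ on $[0,T]$. Consequently $U_K$ restricted to $[0,T]$ is a strong solution of \eqref{3.1}; as $K$, and hence $T$, depends only on $\|U_0\|_{\mathcal{H}}$, i.e. on $E(0)$, this gives the asserted local existence.

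For uniqueness I would argue that any two strong solutions $U_1,U_2\in W^{1,\infty}(0,T';\mathcal{H})$ with the same initial datum are continuous, hence bounded in $\mathcal{H}$ on $[0,T']$; choosing $K'$ larger than $\sup_{t\in[0,T']}\max_{i}\{\|\nabla v_i(t)\|_2,\|\nabla p_i(t)\|_2\}$ makes both $U_1$ and $U_2$ strong solutions of the truncated problem with parameter $K'$ on $[0,T']$, so they coincide by the uniqueness part of Lemma \ref{lem3.1}; the regularity $U\in W^{1,\infty}(0,T;\mathcal{H})$ is inherited from $U_K$. I expect the only delicate points to be the verification in the first paragraph that $f_{i,K}$ is globally Lipschitz as a map $H^1_{\Gamma_0}(\Omega)\to L^2(\Omega)$ — which rests on the nonexpansiveness of the projection onto a ball together with the locally Lipschitz hypothesis — and the bookkeeping in the second paragraph ensuring that the Grönwall constant $C(K)$, and therefore $T$, is controlled solely by $E(0)$.
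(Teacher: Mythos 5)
Your proposal is correct and follows the same skeleton as the paper's proof: truncate the sources via the radial retraction onto the ball $\{\|\nabla w\|_2\le K\}$, invoke Lemma \ref{lem3.1} for the truncated generator, run an a priori energy estimate plus Gr\"onwall, and choose $K$ and $T$ (depending only on $E(0)$) so that the truncation never activates on $[0,T]$. The one substantive difference is in the energy estimate: you discard the nonnegative damping terms and bound $\int_\Omega f_{i,K}(v)v_t\,dx$ by $\|f_{i,K}(v)\|_2\|v_t\|_2$, using only the $H^1_{\Gamma_0}\to L^2$ Lipschitz bound, so your argument never uses the growth hypothesis $g_i(s)s\ge a|s|^{m_i+1}$. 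The paper instead pairs $\|f_i^K(v)\|_{(m_1+1)/m_1}$ with $\|v_t\|_{m_1+1}$ and absorbs the resulting $\epsilon\|v_t\|_{m_1+1}^{m_1+1}$ term into the damping via that growth condition. Both yield the lemma as stated, and yours is the more economical proof of it; but the paper's choice is not gratuitous: in Subsection 3.3 the approximate sources $f_i^n$ are Lipschitz into $L^{(m_i+1)/m_i}$ \emph{uniformly in $n$} (Lemma \ref{lem3.3}(2)) while their $H^1\to L^2$ Lipschitz constants degenerate with $n$, so the paper's version of the estimate is what makes the life span $T$ in \eqref{12170918} independent of $n$. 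Your $T$, controlled by the $H^1\to L^2$ constant $L_i(K)$, would shrink to zero in that application, so the lemma proved your way would need the paper's estimate reinstated before being used downstream. Your uniqueness argument (enlarging $K'$ to dominate both solutions and quoting uniqueness for the truncated problem) is clean and in fact more explicit than what the paper writes.
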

\begin{proof}
Following \cite{B2010,C2002}, we employ standard truncation of the
sources. Define
\begin{equation*}
f_1^K(v)=
\begin{cases}
f_1(v),&\quad \|\nabla v\|_2\le K,\\
f_1(\frac{Kv}{\|\nabla v\|_2}),&\quad \|\nabla v\|_2> K,
\end{cases}
\text{ and }f_2^K(p)=
\begin{cases}
f_2(p),&\quad \|\nabla p\|_2\le K,\\
f_2(\frac{Kp}{\|\nabla p\|_2}),&\quad \|\nabla p\|_2> K,
\end{cases}
\end{equation*}
where $K$ is a positive constant such that
\[K^2\ge\max\Big\{\frac{2\gamma^2+1}{\alpha_1},\frac2\beta\Big\}(4E(0) + 1).\]

Let us consider the following sequence of approximate problems with
truncated nonlinearity
\begin{equation}
\label{1216}
\begin{cases}
\rho v_{tt}-\alpha\Delta v+\gamma\beta\Delta p+g_1(v_t)=f_1^K(v), &(x,t)\in\Omega\times\mathbb{R}^+,\\
\mu p_{tt}-\beta \Delta p+\gamma\beta\Delta v+g_2(p_t)=f_2^K(p), &(x,t)\in\Omega\times\mathbb{R}^+,\\
v=p=0, &(x,t)\in \Gamma_0\times\mathbb{R}^+,\\
\alpha \frac{\partial v}{\partial \vec{n}}-\gamma\beta\frac{\partial p}{\partial \vec{n}}=\beta \frac{\partial p}{\partial \vec{n}}-\gamma\beta\frac{\partial v}{\partial \vec{n}}=0,  &(x,t)\in \Gamma_1\times\mathbb{R}^+,\\
v(x,0)=v_0(x),~v_t(x,0)=v_1(x),&x\in \Omega,\\
p(x,0)=p_0(x),~p_t(x,0)=n_1(x), &x\in \Omega.
\end{cases}
\end{equation}
Note that for each such $K$, the operators $f_1^K,~f_2^K:
H_{\Gamma_0}^1\to L^2(\Omega)$ are globally Lipschitz continuous.
Therefore, it follows from Lemma \ref{lem3.1} that problem
\eqref{1216} has a unique global strong solution  $U_K\in
W^{1,\infty}(0,T;\mathcal{H})$ for arbitrary $T>0$, provided the
initial datum $U_0\in \mathcal{D}(\mathscr{A})$.

For convenience, in what follows, let us denote $(v_K, p_K)$ by $(v,
p)$. Since $v_t,~p_t\in H_{\Gamma_0}^1$ such that
$g_1(v_t),~g_2(p_t)\in (H_{\Gamma_0}(\Omega))'\cap L^1(\Omega)$, we
may choose $v_t$ and $p_t$ as the multipliers for problem
\eqref{1216} and obtain the following energy identity
\begin{equation}
\label{12161} E(t)+\int_0^t\int_\Omega
(g_1(v_t)v_t+g_1(p_t)p_t)dxds=E(0)+\int_0^t\int_\Omega
(f_1^K(v)v_t+f_2^K(p)p_t)dxds.
\end{equation}

A direct computation implies that $f_i^K: H_{\Gamma_0}^1(\Omega)\to
L^{m_i'}(\Omega)$ are globally Lipschitz with Lipchitz constants
$L_{f_i}^K$, $m_i':=\frac{m_i+1}{m_i},~i=1,2$. By utilizing
H\"older’s and Young’s inequalities,
\begin{equation*}
\begin{split}
\int_0^t\int_\Omega f_1^K(v)v_tdxds&\le \int_0^t\| f_1^K(v)\|_{\frac{m_1+1}{m_1}}\|v_t\|_{m_1+1}ds\\&\le \epsilon \int_0^t\|v_t\|_{m_1+1}^{m_1+1}ds+C(\epsilon)\int_0^t\| f_1^K(v)\|_{\frac{m_1+1}{m_1}}^{\frac{m_1+1}{m_1}}ds\\
&\le \epsilon
\int_0^t\|v_t\|_{m_1+1}^{m_1+1}ds+C(\epsilon)L_{f_1}^K\int_0^t\|
\nabla
v\|_2^{\frac{m_1+1}{m_1}}ds+C(\epsilon)t|f_1(0)|^{\frac{m_1+1}{m_1}}|\Omega|.
\end{split}
\end{equation*}
Recall the assumption on the damping that $g_i(s)s\ge a|s|^{m_i}$
for all $|s|\ge 1$, then
\[\int_0^t\int_\Omega g_1(v_t)v_tdxds\ge a\int_0^t\|v_t\|_{m_1+1}^{m_1+1}ds-at|\Omega|.\]
Let us deal with $\int_0^t\int_\Omega f_2^K(p)p_tdxds$ and
$\int_0^t\int_\Omega g_2(p_t)p_tdxds$ by the same method, then it
follows from  \eqref{12161} that
\begin{align}
\label{12162}
&E(t)+a\int_0^t(\|v_t\|_{m_1+1}^{m_1+1}+\|p_t\|_{m_2+1}^{m_2+1})ds\le E(0)+\epsilon \int_0^t(\|v_t\|_{m_1+1}^{m_1+1}+\|p_t\|_{m_2+1}^{m_2+1})ds\notag\\
&\quad+C(\epsilon)L_{max}^K\int_0^t(\| \nabla
v\|_2^{\frac{m_1+1}{m_1}}+\| \nabla
p\|_2^{\frac{m_2+1}{m_2}})ds+C(\epsilon)t(|f_1(0)|^{\frac{m_1+1}{m_1}}+|f_2(0)|^{\frac{m_2+1}{m_2}})|\Omega|+at|\Omega|.
\end{align}
where $L_{max}^K:=\max\{L_{f_1}^K,L_{f_2}^K\}$. By Young's
inequality, then
\begin{align}
\label{12163}
&\int_0^t(\| \nabla v\|_2^{\frac{m_1+1}{m_1}}+\| \nabla p\|_2^{\frac{m_2+1}{m_2}})ds\notag\\
&\quad\le \frac{m_1+1}{2m_1}\int_0^t\| \nabla v\|_2^2ds+\frac{m_1-1}{2m_1}t+\frac{m_2+1}{2m_2}\int_0^t\| \nabla p\|_2^2ds+\frac{m_2-1}{2m_2}t\notag\\
&\quad\le  \frac{m_1+1}{2m_1}\int_0^t\| \nabla v\|_2^2ds+\frac{m_1-1}{2m_1}t+\frac{m_2+1}{m_2}\int_0^t\| \nabla p-\gamma\nabla v\|_2^2ds\notag\\
&\qquad+\frac{(m_2+1)\gamma^2}{m_2}\int_0^t\| \nabla v\|_2^2ds+\frac{m_2-1}{2m_2}t\notag\\
&\quad\le
2\max\Big\{\frac{m_1+1}{2m_1}+\frac{(m_2+1)\gamma^2}{m_2},
\frac{m_2+1}{m_2}\Big\}\int_0^tE(s)ds+\frac{m_1-1}{2m_1}t+\frac{m_2-1}{2m_2}t.
\end{align}
Combining \eqref{12162} with \eqref{12163}, choosing $a\ge
\epsilon$, one has
\[E(t)\le E(0)+C_1T+C_2\int_0^tE(s)ds,\quad \text{for all }t\in [0,T].\]
Here
\[C_1=C(\epsilon)(|f_1(0)|^{\frac{m_1+1}{m_1}}+|f_2(0)|^{\frac{m_1+1}{m_1}})|\Omega|+a|\Omega|+C(\epsilon)L_{max}^K\Big(\frac{m_1-1}{2m_1}+\frac{m_2-1}{2m_2}\Big),\]
\[C_2=C(\epsilon)L_{max}^K 2\max\Big\{\frac{m_1+1}{2m_1}+\frac{(m_2+1)\gamma^2}{m_2}, \frac{m_2+1}{m_2}\Big\}.\]

By Gronwall's inequality, one has
\begin{equation}
\label{12164} E(t)\le (E(0)+C_1T)e^{C_2t},\quad \text{for all }t\in
[0,T].
\end{equation}
Let us choose
\begin{equation}
\label{12170918} T=\min\Big\{\frac{1}{4C_1},\frac{\ln 2}{C_2}\Big\},
\end{equation} and recall $K^2\ge \max\Big\{\frac{2\gamma^2+1}{\alpha_1},\frac2\beta\Big\}(4E(0)+1)$, it follows from \eqref{12164} that
\begin{equation}
\label{12170928} E(t)\le 2(E(0)+1/4)\le
\frac{K^2}{2\max\Big\{\frac{2\gamma^2+1}{\alpha_1},\frac2\beta\Big\}},\quad\text{
for all }t\in [0,T].
\end{equation}
Note that
\begin{align}
\label{12161346}
&\|\nabla v\|_2^2+\|\nabla p\|_2^2\le (2\gamma^2+1)\|\nabla v\|_2^2+2\|\gamma\nabla v-\nabla p\|_2^2\notag\\
&\quad\le
\max\Big\{\frac{2\gamma^2+1}{\alpha_1},\frac2\beta\Big\}\Big[\alpha_1\|\nabla
v\|_2^2+\beta \|\gamma\nabla v-\nabla p\|_2^2\Big]\le
\max\Big\{\frac{2\gamma^2+1}{\alpha_1},\frac2\beta\Big\} 2E(t),
\end{align}
which implies $\|\nabla v\|_2\le K$ and $\|\nabla p\|_2\le K$.
Therefore, $f_1^K(v) = f_1(v)$ and $f_2^K(q) = f_2(q)$ on the time
interval $[0,T]$. Because of the uniqueness of solutions for problem
\eqref{1216}, the solution to the truncated problem \eqref{1216}
coincides with the solution to problem \eqref{1.1} for $t\in [0,T]$.
This completes the proof of Lemma \ref{lem3.2}.
\end{proof}

\subsection{Completion of the proof of the local existence}
Recall that for the values $3<n_1,n_2<6$, the source $f_i$ is not
locally Lipschitz continuous from $H_{\Gamma_0}^1$  into
$L^2(\Omega)$. Therefore,  in order to apply Lemma \ref{lem3.2} to
prove Theorem \ref{thm2.2}, we shall construct Lipschitz
approximations of the source $f_i$.  Set
\begin{equation}
\label{12161439} f_i^n(s)=f_i(s)\eta_n(s),\quad s\in
\mathbb{R},~i=1,2,~n\in \mathbb{N}
\end{equation}
with smooth cutoff functions $\eta_n\in C_0^\infty(\mathbb{R})$
satisfying $0\le \eta_n\le1$, $\eta_n(s)=1$ if $|s|\le n$,
$\eta_n(s)=0$ if $|s|\ge 2n$, and $|\nabla\eta_n(s)|\le C/n$.

The following result is known in \cite{B2010,G2014}.
\begin{lemma}
\label{lem3.3} For $i=1,2$, suppose $m_i\ge 1$, $0<\epsilon<1$.
Assume $f_i:\mathbb{R}\to \mathbb{R}$ such that $|f_i'(s)|\le
C(|s|^{n_i-1})$, where $n_i\frac{m_i+1}{m_i}\le
\frac{6}{1+2\epsilon}$. For each $n\in \mathbb{N}$, the function
$f_i^n$ defined in \eqref{12161439} satisfy
\begin{enumerate}[$(1)$]
  \item $f_i^n:H_{\Gamma_0}^1(\Omega)\to L^2(\Omega)$ are globally Lipschitz with Lipschitz constant depending on $n$.
  \item $f_i^n:H_{\Gamma_0}^{1-\epsilon}(\Omega)\to L^{\frac{m_i+1}{m_i}}(\Omega)$ is locally Lipschitz continuous where the local Lipschitz constant is independent of $n$.
  \end{enumerate}
\end{lemma}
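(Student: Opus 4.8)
The plan is to reduce both assertions to a single estimate on the derivative of the truncated source and then combine it with Hölder's inequality and the three-dimensional Sobolev embedding $H^{1-\epsilon}(\Omega)\hookrightarrow L^{q}(\Omega)$, where $\frac1q=\frac12-\frac{1-\epsilon}{3}$, i.e. $q=\frac{6}{1+2\epsilon}$; the hypothesis $n_i\frac{m_i+1}{m_i}\le\frac{6}{1+2\epsilon}$ is precisely the inequality among exponents that makes the estimates close. For part $(1)$, since $\eta_n\in C_0^\infty(\mathbb{R})$ is supported in $[-2n,2n]$ and $f_i\in C^1$, the function $f_i^n=f_i\eta_n$ is bounded, compactly supported and $C^1$, so $(f_i^n)'=f_i'\eta_n+f_i\eta_n'$ is bounded by a constant $C_n$ (depending on $n$ through $(2n)^{n_i-1}$ and $(2n)^{n_i}/n$). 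Hence $f_i^n$ is globally Lipschitz on $\mathbb{R}$, and the associated superposition operator satisfies $\|f_i^n(u)-f_i^n(\bar u)\|_2\le C_n\|u-\bar u\|_2\le C_n c\,\|u-\bar u\|_{H^1_{\Gamma_0}}$ by Poincaré's inequality \eqref{2.1}, which gives $(1)$.

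For part $(2)$ I first claim the \emph{uniform-in-$n$} bound $|(f_i^n)'(t)|\le C(|t|^{n_i-1}+1)$. On $\{|t|\le n\}$ we have $\eta_n\equiv1$, $\eta_n'\equiv0$, so $(f_i^n)'(t)=f_i'(t)$ and the bound is the assumption on $f_i'$; on $\{|t|>2n\}$ both terms vanish. The delicate region is the annulus $\{n\le|t|\le2n\}$, where $|\eta_n|\le1$, $|\eta_n'|\le C/n$, $|f_i'(t)|\le C|t|^{n_i-1}$ and, integrating the last bound, $|f_i(t)|\le C(|t|^{n_i}+1)$. There the term $f_i(t)\eta_n'(t)$ is controlled using $|t|/n\le2$: since $|t|^{n_i}/n=|t|^{n_i-1}(|t|/n)\le2|t|^{n_i-1}$, one gets $|f_i(t)\eta_n'(t)|\le C(|t|^{n_i-1}+1)$ with $C$ independent of $n$. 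This absorption of the extra power $|t|$ by the factor $1/n$ coming from $|\eta_n'|\le C/n$ is the heart of the lemma; everything else is routine exponent arithmetic. Integrating $(f_i^n)'$ and using the monotonicity of $t\mapsto|t|^{n_i-1}$ for $n_i\ge1$ gives, for all $s,\bar s\in\mathbb{R}$,
\[|f_i^n(s)-f_i^n(\bar s)|\le C\,|s-\bar s|\bigl(|s|^{n_i-1}+|\bar s|^{n_i-1}+1\bigr),\qquad C\text{ independent of }n.\]

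Finally I integrate this pointwise inequality. Write $m_i'=\frac{m_i+1}{m_i}$ and $q=\frac{6}{1+2\epsilon}$; from $n_i\ge1$ we have $m_i'\le n_im_i'\le q$, so the exponents $\frac{q}{m_i'}\ge1$ and $r:=\frac{q}{q-m_i'}\ge1$ are admissible (the borderline $n_i=1$ is trivial, since then $|(f_i^n)'|\le C$ uniformly and $f_i^n$ is globally Lipschitz with an $n$-independent constant, so it may be disposed of first). Raising the pointwise bound to the power $m_i'$ and applying Hölder's inequality with exponents $\frac{q}{m_i'}$ and $r$ yields, for $\|u\|_{H^{1-\epsilon}},\|\bar u\|_{H^{1-\epsilon}}\le R$,
\[\|f_i^n(u)-f_i^n(\bar u)\|_{m_i'}^{m_i'}\le C\,\|u-\bar u\|_q^{m_i'}\bigl\||u|^{n_i-1}+|\bar u|^{n_i-1}+1\bigr\|_{m_i'r}^{m_i'}.\]
Since $(n_i-1)m_i'r\le q$ is algebraically equivalent to $n_im_i'\le q$ — exactly the hypothesis — the boundedness of $\Omega$ together with $H^{1-\epsilon}_{\Gamma_0}(\Omega)\hookrightarrow L^q(\Omega)\hookrightarrow L^{(n_i-1)m_i'r}(\Omega)$ bounds the second factor by $C(\|u\|_q^{n_i-1}+\|\bar u\|_q^{n_i-1}+1)\le C(R^{n_i-1}+1)$, uniformly in $n$. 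Combining this with $\|u-\bar u\|_q\le C\|u-\bar u\|_{H^{1-\epsilon}}$ and taking $m_i'$-th roots gives $\|f_i^n(u)-f_i^n(\bar u)\|_{m_i'}\le L(R)\,\|u-\bar u\|_{H^{1-\epsilon}}$ with $L(R)$ independent of $n$, which proves $(2)$.
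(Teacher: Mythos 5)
Your proof is correct. The paper itself does not prove Lemma \ref{lem3.3}; it only cites the references [B2010, G2014], and your argument is essentially the standard one from those works: the heart of the matter is exactly the uniform-in-$n$ bound $|(f_i^n)'(s)|\le C(|s|^{n_i-1}+1)$, obtained by absorbing the extra power of $|s|$ in $f_i\eta_n'$ via $|\eta_n'|\le C/n$ on the annulus $n\le|s|\le 2n$, followed by the H\"older/Sobolev bookkeeping with $H^{1-\epsilon}(\Omega)\hookrightarrow L^{6/(1+2\epsilon)}(\Omega)$, where the hypothesis $n_i\frac{m_i+1}{m_i}\le\frac{6}{1+2\epsilon}$ is precisely what closes the exponents. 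Your handling of the borderline case $q=m_i'$ (forcing $n_i=1$) is a worthwhile detail that is often glossed over.
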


Based on the truncated sources $f_i^n$ defined in \eqref{12161439},
let us define the nonlinear operator  $\mathscr{A}_n$ in
$\mathcal{H}$ by
\begin{equation*}
\mathscr{A}_n\left(\begin{array}{c}v \\u \\p \\q
\end{array}\right)^T=\left(\begin{array}{c}-u
\\\frac{1}{\rho}\left[-\alpha\Delta v+\gamma\beta\Delta
p+g_1(u)-f_1^n(v)\right] \\-q \\\frac{1}{\mu}\left[-\beta \Delta
p+\gamma\beta\Delta v+g_2(q)-f_2^n(p)\right] \end{array}\right)^T
\end{equation*}
with domain
\begin{equation*}
\begin{split}
\mathcal{D}(\mathscr{A}_n)=\Big\{(v,u,p,q)\in
H_{\Gamma_0}^1(\Omega)\times H_{\Gamma_0}^1(\Omega)\times
H_{\Gamma_0}^1(\Omega)\times
H_{\Gamma_0}^1(\Omega);\\g_1(u),~g_2(q)\in
(H_{\Gamma_0}(\Omega))'\cap L^1(\Omega)
\\ -\alpha\Delta v+\gamma\beta\Delta p+g_1(u)-f_1^n(v)\in L^2(\Omega),~-\beta \Delta p+\gamma\beta\Delta v+g_2(q)-f_2^n(p) \in L^2(\Omega)\\  \alpha  \frac{\partial v}{\partial \vec{n}}-\gamma\beta\frac{\partial p}{\partial \vec{n}}=\beta \frac{\partial p}{\partial \vec{n}}-\gamma\beta\frac{\partial v}{\partial \vec{n}}=0\text{ on } \Gamma_1\Big\}.
\end{split}
\end{equation*}
By Lemma \ref{lem3.3}, $f_1^n(v),~f_2^n(p)\in L^2(\Omega)$ for all
$v,~p\in H_{\Gamma_0}^1(\Omega)$, thus $\mathcal{D}(\mathscr{A}_n)$
is uniform for all $n$. Notice that the space of test functions
$\mathscr{D}(\Omega)^4\subset \mathcal{D}(\mathscr{A}_n)$, and since
the density of $\mathscr{D}(\Omega)^4$ in $\mathcal{H}$, for each
$(v_0, v_1, p_0,p_1)\in  \mathcal{H}$,  there exists a sequence of
functions $U_0^n=(v_0^n, v_1^n, p_0^n,p_1^n)\in
\mathscr{D}(\Omega)^4$  such that $U_0^n\to U_0$ in $\mathcal{H}$.
Therefore,
\begin{equation}
\label{171024} \frac{dU(t)}{dt}+\mathscr{A}_nU(t)=0, \quad
U(0)=(v_0^n, v_1^n, p_0^n,p_1^n)\in \mathscr{D}(\Omega)^4
\end{equation}
has a strong local solution $U^n=(v^n, u^n, p^n,q^n)\in
W^{1,\infty}(0,T;\mathcal{H})$ by Lemmas \ref{lem3.2} and
\ref{lem3.3}. Thanks to Lemma  \ref{lem3.3}, the life span $T$ given
in \eqref{12170918} of each solution $U^n$  is independent of $n$,
since the local Lipschitz constant of the mapping $
f_i^n:H_{\Gamma_0}^1(\Omega)\to L^{\frac{m_1+1}{m_i}}$ is
independent of $n$. Thus, we only emphasize the dependence of $T$ on
$K$. Since $E_n(0)\to E(0)$, we can choose $K$ sufficiently large
such that $K$ is independent of $n$. It follows from
\eqref{12170928} that \[E(t)\le
\Big(2\max\Big\{\frac{2\gamma^2+1}{\alpha_1},\frac2\beta\Big\}\Big)^{-1}K^2,
\quad\text{ for }t\in [0,T],\]  which implies that
\begin{align}
\label{12170932}
\|U^n\|^2_{\mathcal{H}}&=\alpha_1\|\nabla v^n\|_2^2+\beta\|\gamma\nabla v^n-\nabla p^n\|_2^2+\rho\|u^n\|_2^2+\mu\|q^n\|_2^2\notag\\
&\le
\Big(\max\Big\{\frac{2\gamma^2+1}{\alpha_1},\frac2\beta\Big\}\Big)^{-1}K^2.
\end{align}
 By choosing $\epsilon\le a/2$ in \eqref{12162} and combining the uniform boundedness of $E_n(t)$ with \eqref{12161346}, one has
 \begin{equation}
\label{170942}
\int_0^T(\|v^n_t\|_{m_1+1}^{m_1+1}+\|p^n_t\|_{m_2+1}^{m_2+1})ds\le
C(K).
\end{equation}
 It follows from \eqref{12170932} and \eqref{170942} that there exists $U=(v,u,p,q)\in L^\infty(0,T;\mathcal{H})$ such that there exists a subsequence of $\{U^n\}$, labeled again as $\{U^n\}$,
  \begin{equation*}
\label{170955} U^n\to U\text{ weak* in }L^\infty(0,T;\mathcal{H}),
\end{equation*}
  \begin{equation*}
\label{170955} v_t^n\to v_t\text{ weakly in
}L^{m_1+1}(\Omega\times(0,T)),\quad p_t^n\to p_t\text{ weakly in
}L^{m_2+1}(\Omega\times(0,T)),
\end{equation*}
which indicates $u=v_t$ and $q=p_t$.

 By \eqref{12170932} and Aubin-Lions-Simon Compactness Theorem, there exists a subsequence $(v^n,p^n)$, reindexed by $(v^n,p^n)$, such that
\begin{equation*}
\label{171017} (v^n,p^n)\to (v,p)\text{ strongly in }
C([0,T];H_{\Gamma_0}^{1-\epsilon}(\Omega)\times
H_{\Gamma_0}^{1-\epsilon}(\Omega)).
\end{equation*}
Recall that $U^n=(v^n, u^n, p^n,q^n)\in \mathcal{D}(\mathscr{A}_n)$
is a strong solution of \eqref{171024}, then
\begin{equation}
\label{171034}
\begin{split}
&\rho\int_\Omega v_t^n(t)\varphi(t)dx-\rho\int_\Omega v_t^n(0)\varphi(0)dx-\rho\int_0^t\int_\Omega v^n_{t}(s)\varphi_t(s)dxds+\alpha\int_0^t\int_\Omega \nabla v^n(s)\nabla\varphi(s)dxds\\
&\quad-\gamma\beta\int_0^t\int_\Omega \nabla
p^n(s)\nabla\varphi(s)dxds+\int_0^t\int_\Omega
g_1(v^n_t(s))\varphi(s)dxds=\int_0^t\int_\Omega
f_1(v^n(s))\varphi(s)dxds,
\end{split}
\end{equation}
\begin{equation}
\label{171041}
\begin{split}
&\mu\int_\Omega p^n_{t}(t)\psi(t)dx-\mu\int_\Omega p^n_{t}(0)\psi(0)dx-\mu\int_0^t\int_\Omega p^n_{t}(s)\psi_t(s)dxds+\beta\int_0^t\int_\Omega \nabla p^n(s)\nabla\psi(s)dxds\\
&\quad-\gamma\beta\int_0^t\int_\Omega\nabla
v^n(s)\nabla\psi(s)dxds+\int_0^t\int_\Omega
g_2(p^n_t(s))\psi(s)dxds=\int_0^t\int_\Omega f_2(p^n(s))\psi(s)dxds,
\end{split}
\end{equation}
for $\varphi\in C([0,T];H_{\Gamma_0}^1(\Omega))\cap
L^{m_1+1}(\Omega\times(0,T))$, $\psi\in
C([0,T];H_{\Gamma_0}^1(\Omega))\cap L^{m_2+1}(\Omega\times(0,T))$
with $\varphi_t,\psi_t\in C([0,T];L^2(\Omega))$.

For $t\in [0,T]$, we need to show the convergence of nonlinear
sources in \eqref{171034} and \eqref{171041}
\[\lim_{n\to \infty}\int_0^t\int_\Omega f_1(v^n(s))\varphi(s)dxds=\int_0^t\int_\Omega f_1(v(s))\varphi(s)dxds,\]
\[\lim_{n\to \infty}\int_0^t\int_\Omega f_2(p^n(s))\psi(s)dxds=\int_0^t\int_\Omega f_2(p(s))\psi(s)dxds,\]
and the convergence of nonlinear dampings
\[\lim_{n\to \infty}\int_0^t\int_\Omega g_1(v^n_t(s))\varphi(s)dxds=\int_0^t\int_\Omega g_1(v_t(s))\varphi(s)dxds,\]
\[\lim_{n\to \infty}\int_0^t\int_\Omega g_2(p^n_t(s))\psi(s)dxds=\int_0^t\int_\Omega g_2(p_t(s))\psi(s)dxds.\]
The proofs are direct by following \cite{G2014}, the detail proof is
omitted here.

In oder to prove the energy identity \eqref{12161910} stated in
Theorem \ref{thm2.2} for weak solutions.  We shall use the
difference quotients of the solution in time by following the
process in \cite{G2014}. Here we omit the details.

So far, we have completed the proof of Theorem \ref{thm2.2}.

 \section{Global existence of potential well solutions}\label{sec4}

 In this section, we prove the global existence of solutions when the initial data belong to the set $\mathcal W_1$ by the potential well theory.  First, let us prove \eqref{171711}.

 \begin{lemma}\label{1221}
 Let $\mathcal{J}$  be defined by \eqref{17-1} and $d$ by \eqref{17-6}.  Then
 \[d=\inf_{(v,p)\in\mathcal{V}\backslash\{(0,0)\}}\sup_{\lambda\geq0}\mathcal{J}(\lambda (v,p)).\]
 \end{lemma}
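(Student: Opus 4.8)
The plan is to reduce everything to the one‑dimensional \emph{fibering map}. For a fixed $(v,p)\in\mathcal{V}\backslash\{(0,0)\}$ set $h(\lambda):=\mathcal{J}(\lambda v,\lambda p)$ for $\lambda\geq 0$; I will show that $h$ attains its maximum over $[0,\infty)$ at a unique point $\lambda^{*}=\lambda^{*}(v,p)>0$, that $(\lambda^{*}v,\lambda^{*}p)\in\mathcal{N}$, and that conversely every element of $\mathcal{N}$ is realized in this way with $\lambda^{*}=1$; the claimed identity then follows by comparing the two infima. Two preliminary observations are used throughout: if $(v,p)\neq(0,0)$ then $\alpha_1\|\nabla v\|_2^2+\beta\|\gamma\nabla v-\nabla p\|_2^2>0$, since its vanishing forces $\nabla v=0$ and $\nabla p=0$ and hence, by \eqref{2.1} and $v,p\in H^1_{\Gamma_0}(\Omega)$, $v=p=0$; and $\|v\|_{n_1+1}^{n_1+1}+\|p\|_{n_2+1}^{n_2+1}>0$, because at least one of $v,p$ is a nonzero element of $H^1_{\Gamma_0}(\Omega)$.

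From \eqref{17-1},
\begin{align*}
h(\lambda)=\frac{\lambda^2}{2}\bigl(\alpha_1\|\nabla v\|_2^2+\beta\|\gamma\nabla v-\nabla p\|_2^2\bigr)-\frac{\lambda^{n_1+1}}{n_1+1}\|v\|_{n_1+1}^{n_1+1}-\frac{\lambda^{n_2+1}}{n_2+1}\|p\|_{n_2+1}^{n_2+1},
\end{align*}
so $h(0)=0$, $h(\lambda)>0$ for small $\lambda>0$, and, since $n_1,n_2>1$ (equivalently $\hat{c}>2$) and the source norms are not both zero, $h(\lambda)\to-\infty$ as $\lambda\to+\infty$. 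Moreover
\begin{align*}
h'(\lambda)=\lambda\Bigl[\alpha_1\|\nabla v\|_2^2+\beta\|\gamma\nabla v-\nabla p\|_2^2-\lambda^{n_1-1}\|v\|_{n_1+1}^{n_1+1}-\lambda^{n_2-1}\|p\|_{n_2+1}^{n_2+1}\Bigr],
\end{align*}
and the bracketed function is strictly decreasing on $(0,\infty)$ (one of the two monomials has positive coefficient), running from a positive value down to $-\infty$; hence it has a unique zero $\lambda^{*}>0$, with $h'>0$ on $(0,\lambda^{*})$ and $h'<0$ on $(\lambda^{*},\infty)$. Therefore $h$ is increasing then decreasing and $\sup_{\lambda\geq 0}\mathcal{J}(\lambda(v,p))=h(\lambda^{*})$. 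A direct computation from \eqref{17-2} gives the homogeneity identity $\lambda h'(\lambda)=\langle\mathcal{J}'(\lambda v,\lambda p),(\lambda v,\lambda p)\rangle$, so for $\lambda>0$ one has $h'(\lambda)=0$ if and only if $(\lambda v,\lambda p)\in\mathcal{N}$; in particular $(\lambda^{*}v,\lambda^{*}p)\in\mathcal{N}$.

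Now I prove the two inequalities. For ``$\geq$'': given any $(v,p)\in\mathcal{V}\backslash\{(0,0)\}$, the point $(\lambda^{*}v,\lambda^{*}p)\in\mathcal{N}$ satisfies $\sup_{\lambda\geq0}\mathcal{J}(\lambda(v,p))=h(\lambda^{*})=\mathcal{J}(\lambda^{*}v,\lambda^{*}p)\geq\inf_{\mathcal{N}}\mathcal{J}=d$, and taking the infimum over $(v,p)$ yields $\inf_{(v,p)\in\mathcal{V}\backslash\{(0,0)\}}\sup_{\lambda\geq0}\mathcal{J}(\lambda(v,p))\geq d$. For ``$\leq$'': if $(v,p)\in\mathcal{N}$ then $h'(1)=\langle\mathcal{J}'(v,p),(v,p)\rangle=0$, so by uniqueness of the critical point $\lambda^{*}(v,p)=1$ and $\sup_{\lambda\geq0}\mathcal{J}(\lambda(v,p))=h(1)=\mathcal{J}(v,p)$; since $\mathcal{N}\subset\mathcal{V}\backslash\{(0,0)\}$, this gives $\inf_{(v,p)\in\mathcal{V}\backslash\{(0,0)\}}\sup_{\lambda\geq0}\mathcal{J}(\lambda(v,p))\leq\mathcal{J}(v,p)$ for every $(v,p)\in\mathcal{N}$, and taking the infimum over $\mathcal{N}$ completes the proof.

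There is no genuine obstacle here; the heart of the matter is elementary calculus for the scalar map $h$. The only points that need care are the strict positivity of the quadratic form on $\mathcal{V}\backslash\{(0,0)\}$ and the non‑degeneracy of the nonlinear part, which together make $h$ strictly increasing then strictly decreasing with a single interior maximum, and the use of $n_1,n_2>1$ to guarantee both $h(\lambda)\to-\infty$ and the uniqueness of $\lambda^{*}$.
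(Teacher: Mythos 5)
Your proof is correct and follows essentially the same route as the paper: analyze the fibering map $\lambda\mapsto\mathcal{J}(\lambda v,\lambda p)$, establish a unique interior maximizer $\lambda^{*}$ with $(\lambda^{*}v,\lambda^{*}p)\in\mathcal{N}$, and deduce the two inequalities by comparing infima (your monotonicity argument for the bracketed factor of $h'$ is just a slightly streamlined version of the paper's uniqueness step). No gaps.
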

 \begin{proof}

 The verification proceeds in three steps.

{\em Step 1.}  For any fixed \((v,p) \neq (0,0)\) in
\(\mathcal{V}\), define the real-valued function
 \[
 \phi(\lambda) := \mathcal{J}(\lambda v, \lambda p), \quad \lambda \geq 0.
 \]
 From \eqref{17-1}, we have
 \begin{align}
\label{A.1}
   \phi(\lambda) & = \frac{\lambda^2}{2}(\alpha_1\|\nabla v\|_2^2 + \beta\|\gamma\nabla v - \nabla p\|_2^2) - \frac{\lambda^{n_1+1}}{n_1+1}\|v\|_{n_1+1}^{n_1+1} - \frac{\lambda^{n_2+1}}{n_2+1}\|p\|_{n_2+1}^{n_2+1}.
\end{align}
 Note that $\alpha_1\|\nabla v\|_2^2 + \beta\|\gamma\nabla v - \nabla p\|_2^2 > 0$ for $(v,p) \neq (0,0)$. If this expression were zero, then $\nabla v = 0$ and $\nabla p = 0$ a.e., which together with the Dirichlet condition on $\Gamma_0$ and Poincaré's inequality forces $v = p = 0$, a contradiction. Differentiating \eqref{A.1} gives
\begin{align}
\label{A.2}
     \phi'(\lambda)& = \lambda (\alpha_1\|\nabla v\|_2^2 + \beta\|\gamma\nabla v - \nabla p\|_2^2) - \lambda^{n_1}\|v\|_{n_1+1}^{n_1+1} - \lambda^{n_2}\|p\|_{n_2+1}^{n_2+1},
\end{align}
\begin{align}
\label{A.3} \phi''(\lambda) = \alpha_1\|\nabla v\|_2^2 +
\beta\|\gamma\nabla v - \nabla p\|_2^2 -
n_1\lambda^{n_1-1}\|v\|_{n_1+1}^{n_1+1}-
n_2\lambda^{n_2-1}\|p\|_{n_2+1}^{n_2+1}.
\end{align}
 Since \(n_1, n_2 > 1\), we observe that (1) \(\phi(0) = 0\), \(\phi'(0) = 0\);
(2) For sufficiently small \(\lambda > 0\), \(\phi'(\lambda) =
\lambda[\alpha_1\|\nabla v\|_2^2 + \beta\|\gamma\nabla v - \nabla
p\|_2^2 - \lambda^{n_1-1}\|v\|_{n_1+1}^{n_1+1} -
\lambda^{n_2-1}\|p\|_{n_2+1}^{n_2+1}] > 0\); (3) As \(\lambda \to
+\infty\), \(\phi'(\lambda) \sim
-\lambda^{\max\{n_1,n_2\}}(\|v\|_{n_1+1}^{n_1+1}+
\|p\|_{n_2+1}^{n_2+1}) \to -\infty\).
 Hence there exists at least one \(\lambda^* > 0\) such that \(\phi'(\lambda^*) = 0\).

 To prove uniqueness, suppose \(\lambda_1, \lambda_2 > 0\) with \(\lambda_1 < \lambda_2\) both satisfy \(\phi'(\lambda_i) = 0\). Then from \eqref{A.2},
 \[
 \frac{\alpha_1\|\nabla v\|_2^2 + \beta\|\gamma\nabla v - \nabla p\|_2^2}{\|v\|_{n_1+1}^{n_1+1}\lambda_1^{n_1-1} + \|p\|_{n_2+1}^{n_2+1}\lambda_1^{n_2-1}} = 1 = \frac{\alpha_1\|\nabla v\|_2^2 + \beta\|\gamma\nabla v - \nabla p\|_2^2}{\|v\|_{n_1+1}^{n_1+1}\lambda_2^{n_1-1} + \|p\|_{n_2+1}^{n_2+1}\lambda_2^{n_2-1}}.
 \]
 Since the function \(h(t) := \|v\|_{n_1+1}^{n_1+1}t^{n_1-1} + \|p\|_{n_2+1}^{n_2+1}t^{n_2-1}\) is strictly increasing for \(t > 0\), we would have \(h(\lambda_1) < h(\lambda_2)\), a contradiction. Therefore, the critical point \(\lambda^*\) is unique.

 Moreover, at this unique critical point,
\begin{equation*}
\begin{split}
\phi''(\lambda^*)& = \alpha_1\|\nabla v\|_2^2 + \beta\|\gamma\nabla v - \nabla p\|_2^2 - n_1(\lambda^*)^{n_1-1}\|v\|_{n_1+1}^{n_1+1} - n_2(\lambda^*)^{n_2-1}\|p\|_{n_2+1}^{n_2+1}\\
&< \alpha_1\|\nabla v\|_2^2 + \beta\|\gamma\nabla v - \nabla p\|_2^2
- (\lambda^*)^{n_1-1}\|v\|_{n_1+1}^{n_1+1} -
(\lambda^*)^{n_2-1}\|p\|_{n_2+1}^{n_2+1} = 0.
\end{split}
\end{equation*}
Thus \(\lambda^*\) is the unique maximum point of \(\phi\) on
\([0,\infty)\).

{\em Step 2.}
 The condition \(\phi'(\lambda^*) = 0\) is equivalent to
 \[
 \lambda^* (\alpha_1\|\nabla v\|_2^2 + \beta\|\gamma\nabla v - \nabla p\|_2^2) = (\lambda^*)^{n_1}\|v\|_{n_1+1}^{n_1+1} + (\lambda^*)^{n_2}\|p\|_{n_2+1}^{n_2+1}.
 \]
 Multiplying both sides by \(\lambda^*\) gives
 \[
 (\lambda^*)^2 (\alpha_1\|\nabla v\|_2^2 + \beta\|\gamma\nabla v - \nabla p\|_2^2) = (\lambda^*)^{n_1+1}\|v\|_{n_1+1}^{n_1+1} + (\lambda^*)^{n_2+1}\|p\|_{n_2+1}^{n_2+1}.
 \]
 Recalling the definition of \(\mathcal{J}'\) from \eqref{17-2}, this is precisely
 \[
 \langle \mathcal{J}'(\lambda^* v, \lambda^* p), (\lambda^* v, \lambda^* p) \rangle = 0,
 \]
 which means \((\lambda^* v, \lambda^* p) \in \mathcal{N}\).

{\em Step 3.}  From {\em Step 1} and {\em Step 2}, for any \((v,p)
\neq (0,0)\),
 \[
 \sup_{\lambda\geq 0} \mathcal{J}(\lambda v, \lambda p) = \mathcal{J}(\lambda^* v, \lambda^* p) \geq d,
 \]
 where the inequality holds because \((\lambda^* v, \lambda^* p) \in \mathcal{N}\) and \(d\) is the infimum over \(\mathcal{N}\). Taking infimum over all \((v,p) \neq (0,0)\) yields
\begin{equation}
\label{A.4} \inf_{(v,p)\in\mathcal{V}\backslash\{(0,0)}
\sup_{\lambda\geq 0} \mathcal{J}(\lambda v, \lambda p) \geq d.
\end{equation}

 Conversely, for any \((v,p) \in \mathcal{N}\), consider again the function \(\phi(\lambda) = \mathcal{J}(\lambda v, \lambda p)\). Since \((v,p) \in \mathcal{N}\), we have \(\langle \mathcal{J}'(v,p), (v,p) \rangle = 0\), which implies \(\phi'(1) = 0\). By the uniqueness proved in Step 1, \(\lambda = 1\) is the unique critical point and hence the maximizer of \(\phi\). Therefore,
 \[
 \mathcal{J}(v,p) = \sup_{\lambda\geq 0} \mathcal{J}(\lambda v, \lambda p) \geq \inf_{(\tilde v,\tilde p)\in\mathcal{V}\backslash\{(0,0)} \sup_{\lambda\geq 0} \mathcal{J}(\lambda \tilde v, \lambda \tilde p).
 \]
 Taking infimum over all \((v,p) \in \mathcal{N}\) gives
 \begin{equation}
\label{A.5}
 d \geq \inf_{(v,p)\in\mathcal{V}\backslash\{(0,0)} \sup_{\lambda\geq 0} \mathcal{J}(\lambda v, \lambda p).
\end{equation}
Combining \eqref{A.4} and \eqref{A.5} establishes the equality
\eqref{171711}.
 \end{proof}

Second, let us illustrate the depth of the potential well $d>0$.
 \begin{lemma}\label{lem4-1}
Let Assumption $\ref{ass}(3)$ hold. For $1<n_1,n_2\leq 5$, then
$d>0$.
\end{lemma}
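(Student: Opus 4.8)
The plan is to show that the potential energy $\mathcal{J}$ is bounded below by a strictly positive constant \emph{uniformly} over the Nehari manifold $\mathcal{N}$; taking the infimum then yields $d>0$. Throughout I would abbreviate $\Phi(v,p):=\alpha_1\|\nabla v\|_2^2+\beta\|\gamma\nabla v-\nabla p\|_2^2$, which is nonnegative and, as already observed in the proof of Lemma \ref{1221}, vanishes only at $(0,0)$. Writing $\nabla p=\gamma\nabla v-(\gamma\nabla v-\nabla p)$ shows that $\|\nabla v\|_2^2\le\alpha_1^{-1}\Phi(v,p)$ and $\|\nabla p\|_2^2\le(2\gamma^2\alpha_1^{-1}+2\beta^{-1})\Phi(v,p)$, so $\Phi$ is equivalent to the full norm $\|\nabla v\|_2^2+\|\nabla p\|_2^2$ on $\mathcal{V}$; the only mild subtlety in the whole argument is that $\|\nabla p\|_2$ is controlled only indirectly, through the coupled quantity $\|\gamma\nabla v-\nabla p\|_2$.

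First I would exploit the Nehari constraint. For $(v,p)\in\mathcal{N}$, the definitions \eqref{17-2}--\eqref{17-3} give
\begin{align*}
\Phi(v,p)=\|v\|_{n_1+1}^{n_1+1}+\|p\|_{n_2+1}^{n_2+1}.
\end{align*}
Since $1<n_i\le5$, the Sobolev embedding $H_{\Gamma_0}^1(\Omega)\hookrightarrow L^{n_i+1}(\Omega)$ holds in dimension three (the exponent $n_i+1$ never exceeds the critical value $6$), so there are constants $B_i>0$ with $\|v\|_{n_1+1}\le B_1\|\nabla v\|_2$ and $\|p\|_{n_2+1}\le B_2\|\nabla p\|_2$. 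Substituting these into the identity above and using the bounds on $\|\nabla v\|_2,\|\nabla p\|_2$ in terms of $\Phi$ produces
\begin{align*}
\Phi(v,p)\le B_1^{n_1+1}\|\nabla v\|_2^{n_1+1}+B_2^{n_2+1}\|\nabla p\|_2^{n_2+1}\le C\big(\Phi(v,p)^{\frac{n_1+1}{2}}+\Phi(v,p)^{\frac{n_2+1}{2}}\big),
\end{align*}
with $C>0$ depending only on $\Omega$, $n_1$, $n_2$ and the model coefficients.

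Next I would extract a positive lower bound for $\Phi$ on $\mathcal{N}$. Since $(v,p)\ne(0,0)$ forces $\Phi(v,p)>0$, dividing the previous inequality by $\Phi(v,p)$ gives
\begin{align*}
1\le C\big(\Phi(v,p)^{\frac{n_1-1}{2}}+\Phi(v,p)^{\frac{n_2-1}{2}}\big).
\end{align*}
Both exponents $\frac{n_i-1}{2}$ are strictly positive because $n_1,n_2>1$, so the right-hand side tends to $0$ as $\Phi(v,p)\to0$; hence there is a constant $r>0$, independent of $(v,p)$, such that $\Phi(v,p)\ge r$ for all $(v,p)\in\mathcal{N}$.

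Finally, I would use the Nehari identity once more to remove the source terms from \eqref{17-1}: for $(v,p)\in\mathcal{N}$, using $\tfrac{1}{n_i+1}\le\tfrac1{\hat c}$,
\begin{align*}
\mathcal{J}(v,p)=\frac12\Phi(v,p)-\frac{1}{n_1+1}\|v\|_{n_1+1}^{n_1+1}-\frac{1}{n_2+1}\|p\|_{n_2+1}^{n_2+1}\ge\Big(\frac12-\frac1{\hat c}\Big)\Phi(v,p)\ge\Big(\frac12-\frac1{\hat c}\Big)r.
\end{align*}
Since $\hat c>2$, the factor $\tfrac12-\tfrac1{\hat c}$ is positive, and taking the infimum over $\mathcal{N}$ gives $d\ge(\tfrac12-\tfrac1{\hat c})r>0$. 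Apart from keeping track of the coupled term $\|\gamma\nabla v-\nabla p\|_2$ when passing between $\Phi$ and the $H^1$-norms, this is the standard potential-well computation and I do not anticipate any genuine obstacle.
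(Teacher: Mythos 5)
Your proposal is correct and follows essentially the same route as the paper: use the Nehari identity to replace the source terms by the quadratic form $\Phi$, bound $\|v\|_{n_1+1}^{n_1+1}+\|p\|_{n_2+1}^{n_2+1}$ by powers of $\Phi$ via the Sobolev embedding and the equivalence \eqref{12161346}, divide by $\Phi>0$ to obtain a uniform lower bound $\Phi\ge s_0>0$ on $\mathcal{N}$, and conclude $\mathcal{J}\ge(\tfrac12-\tfrac1{\hat c})s_0$. The only differences are cosmetic (your embedding constants are the paper's $B_i$ in a different normalization, and your $r$ is the paper's root $s_0$).
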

\begin{proof}
Fix $(v,p) \in \mathcal{N}$. In view of \eqref{17-1}, we get
\begin{align}\label{17-7}
\mathcal{J}(v,p)\geq\left(\frac{1}{2}-\frac{1}{\hat{c}}\right)(\alpha_1\|\nabla
v\|_2^2+\beta\|\gamma\nabla v-\nabla p\|_2^2),
\end{align}
by recalling $\hat{c}:=\min\{n_1+1,n_2+1\}>2$. Thanks to the Sobolev
embedding $H^1_{\Gamma_0}(\Omega)  \hookrightarrow
L^{n_i+1}(\Omega)$ for $1<n_1,n_2\leq 5$,  let us define the best
embedding constants
\begin{align}\label{12171637}
B_i:=\sup_{s\in
H^1_{\Gamma_0}(\Omega)\backslash\{0\}}\frac{\|s\|^{n_i+1}_{n_i+1}}{\|\nabla
s\|^{n_i+1}_2}.
\end{align}
It follows from \eqref{17-3}, \eqref{12161346} and embedding
inequalities \eqref{12171637} that
\begin{align}
\label{171608}
&\alpha_1\|\nabla v\|_2^2+\beta\|\gamma\nabla v-\nabla p\|_2^2=\|v\|_{n_1+1}^{n_1+1}+\|p\|_{n_2+1}^{n_2+1}\le B_1\|\nabla v\|_{2}^{n_1+1}+B_2\|\nabla p\|_{2}^{n_2+1}\notag\\
&\quad\le \hat{C}\Big[(\alpha_1\|\nabla v\|_2^2+\beta\|\gamma\nabla
v-\nabla p\|_2^2)^{\frac{n_1+1}{2}}+(\alpha_1\|\nabla
v\|_2^2+\beta\|\gamma\nabla v-\nabla
p\|_2^2)^{\frac{n_2+1}{2}}\Big],
\end{align}
where\[\hat{C}=\max\Big\{B_1\max\Big\{\frac{2\gamma^2+1}{\alpha_1},\frac2\beta\Big\}^{^{\frac{n_1+1}{2}}},B_2\max\Big\{\frac{2\gamma^2+1}{\alpha_1},\frac2\beta\Big\}^{^{\frac{n_2+1}{2}}}\Big\}.\]
Noting $(v,p)\neq(0,0)$, we infer from \eqref{171608} that
$$
(\alpha_1\|\nabla v\|_2^2+\beta\|\gamma\nabla v-\nabla
p\|_2^2)^{\frac{n_1-1}{2}}+(\alpha_1\|\nabla
v\|_2^2+\beta\|\gamma\nabla v-\nabla p\|_2^2)^{\frac{n_2-1}{2}} \geq
\frac{1}{\hat{C}}.
$$
Then, $\alpha_1\|\nabla v\|_2^2+\beta\|\gamma\nabla v-\nabla
p\|_2^2\ge s_0>0$, where $s_0$ is the unique positive root of the
equation
$s^{\frac{n_1-1}{2}}+s^{\frac{n_2-1}{2}}=\frac{1}{\hat{C}}$. It
follows from \eqref{17-7} that
$$
\mathcal{J}(v,p)\geq \left(\frac{1}{2}-\frac{1}{\hat{c}}\right)s_0,\
\;\;\text{for all}\;\; (v,p)\in\mathcal{N}.
$$
This completes the proof.
\end{proof}

It follows  from \eqref{17-1}  that
\begin{align}\label{171709}
\mathcal{J}(v,p)&\geq \frac{1}{2}\left(\alpha_1\|\nabla v\|_2^2+\beta\|\gamma\nabla v-\nabla p\|_2^2\right) -\frac{B_1}{n_1+1}\|\nabla v\|_{2}^{n_1+1}-\frac{B_2}{n_2+1}\|\nabla p\|_{2}^{n_2+1}\notag\\
&\geq \frac{1}{2}\left(\alpha_1\|\nabla v\|_2^2+\beta\|\gamma\nabla v-\nabla p\|_2^2\right) -\frac{\hat{C}}{n_1+1}\left(\alpha_1\|\nabla v\|_2^2+\beta\|\gamma\nabla v-\nabla p\|_2^2\right)^{\frac{n_1+1}{2}}\notag\\
&\quad-\frac{\hat{C}}{n_2+1}\left(\alpha_1\|\nabla
v\|_2^2+\beta\|\gamma\nabla v-\nabla
p\|_2^2\right)^{\frac{n_2+1}{2}}.
\end{align}
Define the function $\Lambda(s)$  by
\begin{align}\label{271321}
\Lambda(s):=\frac{1}{2}s-\frac{\hat{C}}{n_1+1}s^{\frac{n_1+1}{2}}-\frac{\hat{C}}{n_2+1}s^{\frac{n_2+1}{2}}.
\end{align}
In view of $n_1,n_2>1$, then
\begin{equation}
\label{271318}
\Lambda'(s)=\frac12-\frac{\hat{C}}{2}s^{\frac{n_1-1}{2}}-\frac{\hat{C}}{2}s^{\frac{n_2-1}{2}},
\end{equation}
has only one positive zero at $s^*$. It is easy to see that
$\Lambda(s)$ has a maximum value at $s^*$ on $[0,\infty)$, that is,
$\sup_{s\in[0,\infty)}\Lambda(s)=\Lambda(s^*)>0$.

 \begin{proofth1}
 Let the initial data $(v_0,p_0)\in\mathcal{W}_1$ and $\mathcal{E}(0)<d$.

(1)  By \eqref{231220}, it is direct that $\mathcal{E}(t)$ is
non-increasing. Therefore, $\mathcal{E}(t)\leq\mathcal{E}(0)<d$. And
also for any $t\in [0,T)$,
\begin{align}\label{171741}
\mathcal{J}(v,p)\leq\mathcal{E}(t)\leq\mathcal{E}(0)<d.
\end{align}
Then $(i)$ is proved, and we obtain that $(v,p)\in\mathcal{W}$ for
all $t\in [0,T)$.

(2) Let us prove $(v,p)\in\mathcal{W}_1$ for all $t\in[0,T)$ by
contradiction. Assume that there exists a time $t_1\in(0,T)$ such
that $(v(t_1),p(t_1))\notin\mathcal{W}_1$. Due to $(v(t_1),p(t_1))
\in \mathcal{W}$, by recalling
$\mathcal{W}_1\cup\mathcal{W}_2=\mathcal{W}$ and
$\mathcal{W}_1\cap\mathcal{W}_2=\emptyset$, then it must be the case
$(v(t_1),p(t_1))\in\mathcal{W}_2$.

For any $t$, $s\in [0,T)$, then by the mean value theorem, one
obtains
\begin{align}\label{301159}
\Big|\|v(t)\|_{n_1+1}^{n_1+1}-\|v(s)\|_{n_1+1}^{n_1+1}\Big|&\leq
C\int_\Omega
(|v(t)|^{n_1}+|v(s)|^{n_1})|v(t)-v(s)|dx\notag\\
&\leq C(\|v(t)\|^{n_1}_{\frac65{n_1}}+\|v(s)\|^{n_1}_{\frac65{n_1}})\|v(t)-v(s)\|_6\notag\\
&\leq  C(\|\nabla v(t)\|^{n_1}_2+\|\nabla v(s)\|^{n_1}_2)\|\nabla
v(t)-\nabla v(s)\|_2,
\end{align}
where we have used the assumption that $n_1\leq 5$ and the embedding
$H^1_{\Gamma_0}(\Omega)\hookrightarrow L^6(\Omega)$. It follows from
\eqref{301159} and $v\in C([0,T);H^1_{\Gamma_0}(\Omega))$ that
$\|v(t)\|_{n_1+1}^{n_1+1}$ is continuous on $[0,T)$. Likewise,
$\|p(t)\|_{n_2+1}^{n_2+1}$ is continuous on $[0,T)$. Therefore, the
mapping
\begin{align}
\label{1717521} t \mapsto    \alpha_1\|\nabla
v\|_2^2+\beta\|\gamma\nabla v-\nabla
p\|_2^2-\|v\|_{n_1+1}^{n_1+1}-\|p\|_{n_2+1}^{n_2+1}
\end{align}
is continuous.

In view of $(v(0),p(0))\in\mathcal{W}_1$ and
$(v(t_1),p(t_1))\in\mathcal{W}_2$ as well as the continuity of the
function in (\ref{1717521}),
 the intermediate value theorem asserts that there exists a time $s\in (0,t_1)$ such that
\begin{align}\label{171753}
\alpha_1\|\nabla v(s)\|_2^2+\beta\|\gamma\nabla v(s)-\nabla
p(s)\|_2^2=\|v(s)\|_{n_1+1}^{n_1+1}+\|p(s)\|_{n_2+1}^{n_2+1} .
\end{align}
Define $t^*$ be the supremum of all $s\in(0,t_1)$ satisfying
\eqref{171753}, due to the continuity of the function in
(\ref{1717521}), therefore, $t^*\in(0,t_1)$ satisfies
\eqref{171753}, and $(v,p)\in\mathcal{W}_2$ for any $t\in
(t^*,t_1]$. Let us consider the following two cases.

\textbf{Case 1.}  $(v(t^*),p(t^*))\neq(0,0)$. \\
Since \eqref{171753} holds for $t^*$, one has
$(v(t^*),p(t^*))\in\mathcal{N}$. Then, it follows from \eqref{17-6}
that $\mathcal{J}(v(t^*),p(t^*))\geq d$,  which contradicts
\eqref{171741}.

\textbf{Case 2.}  $(v(t^*),p(t^*))=(0,0)$.\\
 Note that
$(v,p)\in\mathcal{W}_2$ for any $t\in(t^*,t_1]$. By the definition
of the set $\mathcal W_2$ and \eqref{171608}, one has that for any
$t\in(t^*,t_1]$,
\begin{equation*}
\begin{split}
&\alpha_1\|\nabla v\|_2^2+\beta\|\gamma\nabla v-\nabla p\|_2^2<\|v\|_{n_1+1}^{n_1+1}+\|p\|_{n_2+1}^{n_2+1}\\
&\leq \hat{C}\left(\alpha_1\|\nabla v\|_2^2+\beta\|\gamma\nabla
v-\nabla p\|_2^2\right)^{\frac{n_1+1}{2}}
+\hat{C}\left(\alpha_1\|\nabla v\|_2^2+\beta\|\gamma\nabla v-\nabla
p\|_2^2\right)^{\frac{n_2+1}{2}}.
\end{split}
\end{equation*}
Since $(0,0)$ does not belong to $\mathcal W_2$, then $(v,p)
\not=(0,0)$ for any $t\in (t^*,t_1]$. This yields  $\alpha_1\|\nabla
v\|_2^2+\beta\|\gamma\nabla v-\nabla p\|_2^2>s_0$, for any $t\in
(t^*,t_1]$, where $s_0>0$ is the unique positive solution of
$s^{\frac{n_1-1}{2}}+s^{\frac{n_2-1}{2}}=\frac{1}{\hat{C}}$. Since
the weak solution $(v,p)$ is continuous from $[0,T)$ to
$\mathcal{V}$, one has $\alpha_1\|\nabla
v(t^*)\|_2^2+\beta\|\gamma\nabla v(t^*)-\nabla p(t^*)\|_2^2\geq
s_0>0$. This contradicts  the assumption $(v(t^*),p(t^*))=(0,0)$.
Therefore $(v,p)\in \mathcal{W}_1$ for all $t\in [0,T)$. Thus,
$(ii)$ is proved.

$(3)$ In the following, we prove that the weak solution $(v,p)$ on
$[0,T)$ is global in time, in other words, the maximum lifespan
$T=\infty$. We need to prove that the quadratic energy $E(t)$ has a
uniform bound independent of time.

It follows from  \eqref{171741} that
\begin{align}\label{171815}
d>\mathcal{J}(v,p)=\frac{1}{2}\left(\alpha_1\|\nabla
v\|_2^2+\beta\|\gamma\nabla v-\nabla p\|_2^2\right)
-\frac{1}{n_1+1}\|v\|_{n_1+1}^{n_1+1}-\frac{1}{n_2+1}\|p\|_{n_2+1}^{n_2+1}.
\end{align}
Since we have proved that $(v,p)\in\mathcal{W}_1$ for all
$t\in[0,T)$, one has
\begin{align}\label{171815-1}
\frac{1}{n_1+1}\|v\|_{n_1+1}^{n_1+1}+\frac{1}{n_2+1}\|p\|_{n_2+1}^{n_2+1}\leq
\frac{1}{\hat{c}}\left(\alpha_1\|\nabla v\|_2^2+\beta\|\gamma\nabla
v-\nabla p\|_2^2\right).
\end{align}
 It follows from \eqref{171815} and
\eqref{171815-1} that for any $t\in [0,T)$,
\begin{align}\label{171818}
\frac{1}{n_1+1}\|v\|_{n_1+1}^{n_1+1}+\frac{1}{n_2+1}\|p\|_{n_2+1}^{n_2+1}<\frac{2d}{\hat{c}-2}.
\end{align}
Substituting \eqref{171818} into \eqref{231218}, and noting
\eqref{231213}, we get that for any $t\in[0,T),$
\begin{align*}
E(t)+\int_0^t(\|v_t\|_{m_1+1}^{m_1+1}+\|p_t\|_{m_2+1}^{m_2+1})ds&=\mathcal{E}(0)+\frac{1}{n_1+1}\|v\|_{n_1+1}^{n_1+1}+\frac{1}{n_2+1}\|p\|_{n_2+1}^{n_2+1}\notag\\
&<d+\frac{2d}{\hat{c}-2}=\frac{\hat{c}d}{\hat{c}-2},
\end{align*}
which gives $E(t)< \frac{\hat{c}d}{\hat{c}-2}$. $(iii)$ is proved.

By standard iterative procedure, one obtains a global weak solution
defined on $[0,\infty)$. That is to say, the maximum lifespan
$T=\infty$.

(4) To show $(iv)$, it follows from (\ref{171815-1}) that
$$
\mathcal{E}(t)\geq
\frac{1}{2}(\|v_t\|^2_2+\|p_t\|^2_2)+\left(\frac{1}{2}-\frac{1}{\hat{c}}\right)\left(\alpha_1\|\nabla
v\|_2^2+\beta\|\gamma\nabla v-\nabla p\|_2^2\right) \geq
\frac{\hat{c}-2}{\hat{c}}E(t).
$$

The proof of Theorem \ref{thmg} is complete.
 \end{proofth1}

\section{Energy decay rates}\label{sec5}
This section is devoted to proving Theorem \ref{thm2}. Before going
further, let us give some lemmas which are crucial to prove Theorem
\ref{thm2}.

Define
\begin{align}
\label{171725} \tilde{\mathcal{W}}_1:=\{(v,p)\in \mathcal{V}:
\alpha_1\|\nabla v\|_2^2+\beta\|\gamma\nabla v-\nabla
p\|_2^2<s^*,\mathcal{J}(v,p)<\Lambda(s^*)\}.
\end{align}
\begin{lemma}\label{lem3-2}
$\tilde{\mathcal{W}}_1$ is a subset of $\mathcal{W}_1$.
\end{lemma}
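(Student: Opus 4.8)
The plan is to verify, for an arbitrary $(v,p)\in\tilde{\mathcal{W}}_1$, the two defining properties of $\mathcal{W}_1$: namely $(v,p)\in\mathcal{W}$ (i.e.\ $\mathcal{J}(v,p)<d$), and $\alpha_1\|\nabla v\|_2^2+\beta\|\gamma\nabla v-\nabla p\|_2^2>\|v\|_{n_1+1}^{n_1+1}+\|p\|_{n_2+1}^{n_2+1}$. The case $(v,p)=(0,0)$ lies in $\mathcal{W}_1$ by definition, so I would assume $(v,p)\neq(0,0)$ and set $s:=\alpha_1\|\nabla v\|_2^2+\beta\|\gamma\nabla v-\nabla p\|_2^2$. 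As in the proof of Lemma \ref{1221}, $(v,p)\neq(0,0)$ forces $s>0$, while the definition of $\tilde{\mathcal{W}}_1$ gives $0<s<s^*$ and $\mathcal{J}(v,p)<\Lambda(s^*)$.

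First I would establish $d\geq\Lambda(s^*)$; this then yields $\mathcal{J}(v,p)<\Lambda(s^*)\le d$, hence $(v,p)\in\mathcal{W}$. To this end I combine the mountain-pass identity \eqref{171711} with the pointwise lower bound \eqref{171709}. For any $(w,z)\in\mathcal{V}\setminus\{(0,0)\}$, applying \eqref{171709} to $(\lambda w,\lambda z)$ gives $\mathcal{J}(\lambda w,\lambda z)\geq\Lambda\big(\lambda^2(\alpha_1\|\nabla w\|_2^2+\beta\|\gamma\nabla w-\nabla z\|_2^2)\big)$; since the argument of $\Lambda$ ranges over all of $[0,\infty)$ as $\lambda\geq0$ varies, and since $\sup_{\sigma\geq0}\Lambda(\sigma)=\Lambda(s^*)$, we get $\sup_{\lambda\geq0}\mathcal{J}(\lambda w,\lambda z)\geq\Lambda(s^*)$. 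Taking the infimum over $(w,z)$ and invoking \eqref{171711} gives $d\geq\Lambda(s^*)$.

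For the second property I would use that, by \eqref{271318}, $s^*$ is precisely the positive root of $\hat{C}\big(s^{\frac{n_1-1}{2}}+s^{\frac{n_2-1}{2}}\big)=1$. Since $0<s<s^*$ and $t\mapsto\hat{C}\big(t^{\frac{n_1-1}{2}}+t^{\frac{n_2-1}{2}}\big)$ is strictly increasing on $(0,\infty)$ (recall $n_1,n_2>1$), we obtain $\hat{C}\big(s^{\frac{n_1-1}{2}}+s^{\frac{n_2-1}{2}}\big)<1$, and multiplying by $s$ gives $\hat{C}\big(s^{\frac{n_1+1}{2}}+s^{\frac{n_2+1}{2}}\big)<s$. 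Coupling this with the Sobolev-embedding chain already recorded in \eqref{171608}, which is valid for every $(v,p)\in\mathcal{V}$ and reads $\|v\|_{n_1+1}^{n_1+1}+\|p\|_{n_2+1}^{n_2+1}\leq\hat{C}\big(s^{\frac{n_1+1}{2}}+s^{\frac{n_2+1}{2}}\big)$, yields $\|v\|_{n_1+1}^{n_1+1}+\|p\|_{n_2+1}^{n_2+1}<s$, as desired. Together with the first step, $(v,p)\in\mathcal{W}_1$.

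The only genuinely non-mechanical point is the first step: one must notice that $\Lambda$ and its maximizer $s^*$ encode the mountain-pass level, so that the bound $\mathcal{J}(v,p)<\Lambda(s^*)$ built into the definition of $\tilde{\mathcal{W}}_1$ upgrades to $\mathcal{J}(v,p)<d$. The remainder is bookkeeping, relying on the coincidence of the critical-point equation $\Lambda'(s^*)=0$ with the threshold equation $\hat{C}(s^{(n_1-1)/2}+s^{(n_2-1)/2})=1$ appearing in Lemma \ref{lem4-1}.
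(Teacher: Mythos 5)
Your proof is correct and follows essentially the same route as the paper: you derive $\Lambda(s^*)\le d$ from the mountain-pass identity \eqref{171711} together with the lower bound \eqref{171709}, and you obtain the strict Nehari-type inequality from the Sobolev chain \eqref{171608} and the characterization of $s^*$ as the root of $\hat{C}(s^{(n_1-1)/2}+s^{(n_2-1)/2})=1$. The only differences are cosmetic — you treat the case $(v,p)=(0,0)$ explicitly and write the scaling as $\lambda^2$ (which is in fact the correct exponent, silently absorbed in the paper's supremum argument).
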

\begin{proof}
For $(v,p)\in \mathcal{V}\backslash\{(0,0)\}$, \eqref{171709} gives
that
\[\mathcal{J}(\lambda (v,p))\geq \Lambda(\lambda(\alpha_1\|\nabla v\|_2^2+\beta\|\gamma\nabla v-\nabla p\|_2^2)),\quad\text{ for all } \lambda\geq 0.\]
Then we have
$$
\sup_{\lambda\geq0}\mathcal{J}(\lambda (v,p))
 \geq   \sup_{\lambda\geq0}   \Lambda(\lambda(\alpha_1\|\nabla v\|_2^2+\beta\|\gamma\nabla v-\nabla p\|_2^2))  = \sup_{s\in[0,\infty)}\Lambda(s) = \Lambda(s^*).
$$
 It follows from  \eqref{171711} that
\begin{align}    \label{171726}
\Lambda(s^*)\leq d.
\end{align}
By using \eqref{171709}, we have that for any $(v,p) \in \mathcal{V}
\backslash \{(0,0)\}$ with $\alpha_1\|\nabla
v\|_2^2+\beta\|\gamma\nabla v-\nabla p\|_2^2< s^*$,
\begin{align}   \label{171727}
&\|v\|_{n_1+1}^{n_1+1}+\|p\|_{n_2+1}^{n_2+1}\le B_1\|\nabla v\|_{2}^{n_1+1}+B_2\|\nabla p\|_{2}^{n_2+1}\notag\\
&\quad\leq \hat{C}\left(\alpha_1\|\nabla v\|_2^2+\beta\|\gamma\nabla
v-\nabla p\|_2^2\right)^{\frac{n_1+1}{2}}
+\hat{C}\left(\alpha_1\|\nabla v\|_2^2+\beta\|\gamma\nabla v-\nabla p\|_2^2\right)^{\frac{n_2+1}{2}}\notag\\
&\quad<(\alpha_1\|\nabla v\|_2^2+\beta\|\gamma\nabla v-\nabla
p\|_2^2)\Big[\hat{C}s_*^{\frac{n_1-1}{2}}
+\hat{C}s_*^{\frac{n_2-1}{2}}\Big]\notag\\
&\quad=\alpha_1\|\nabla v\|_2^2+\beta\|\gamma\nabla v-\nabla
p\|_2^2.
\end{align}
Combining \eqref{171725}, \eqref{171726} and \eqref{171727}, we
conclude that $\tilde{\mathcal{W}}_1\subset\mathcal{W}_1$.
\end{proof}

For small enough $\delta>0$, let us define a closed subset of
$\tilde{\mathcal{W}}_1$ by
\begin{align*}
\tilde{\mathcal{W}}^{\delta}_1:=\{(v,p)\in \mathcal{V}:
\alpha_1\|\nabla v\|_2^2+\beta\|\gamma\nabla v-\nabla p\|_2^2\leq
s^*-\delta, \, \mathcal{J}(v,p)\leq\Lambda(s^*-\delta)\}.
\end{align*}
$\tilde{\mathcal{W}}^{\delta}_1$ is a closed set because the space
$\mathcal{V}$ is complete and $\mathcal J$ is continuous from
$\mathcal{V}$ to $\mathbb R$. Clearly,
\begin{align*}
\tilde{\mathcal{W}}^{\delta}_1  \subset \tilde{\mathcal{W}}_1
\subset \mathcal W_1.
\end{align*}

\begin{lemma}\label{lem3-3}
Let Assumption $\ref{ass}(3)$ hold. Assume  that $1<n_i\leq 5$,
$m_i>1$, and  $\delta>0$ is suitably small,
\[\mathcal{E}(0)\leq\Lambda(s^*-\delta),\quad(v_0,p_0)\in\tilde{\mathcal{W}}^{\delta}_1. \]
Then problem \eqref{1.1} admits a global solution
$(v,p)\in\tilde{\mathcal{W}}^{\delta}_1$ for all $t\geq0$.
\end{lemma}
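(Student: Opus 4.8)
The plan is to read the lemma as a \emph{potential-well invariance (trapping) statement}: first one checks that the hypotheses of Theorem~\ref{thmg} hold, obtaining a global weak solution, and then a barrier argument built on the shape of $\Lambda$ shows that the solution can never leave the closed shell $\tilde{\mathcal{W}}^{\delta}_1$. To verify the hypotheses of Theorem~\ref{thmg}, note that since $n_1,n_2>1$ the derivative \eqref{271318} shows $\Lambda$ is strictly increasing on $[0,s^*]$, so for $\delta>0$ small enough we have $0<s^*-\delta$ and $\Lambda(s^*-\delta)<\Lambda(s^*)$; combined with $\Lambda(s^*)\le d$ from \eqref{171726} and with $\tilde{\mathcal{W}}^{\delta}_1\subset\tilde{\mathcal{W}}_1\subset\mathcal{W}_1$ (Lemma~\ref{lem3-2}), this gives $(v_0,p_0)\in\mathcal{W}_1$ and $\mathcal{E}(0)\le\Lambda(s^*-\delta)<d$. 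Theorem~\ref{thmg} then provides a global weak solution $(v,p)$, and its parts $(i)$ and $(ii)$ give, for every $t\ge0$,
\[
\mathcal{J}(v(t),p(t))\le\mathcal{E}(t)\le\mathcal{E}(0)\le\Lambda(s^*-\delta)\quad\text{and}\quad (v(t),p(t))\in\mathcal{W}_1 .
\]

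Next I would track $Q(t):=\alpha_1\|\nabla v(t)\|_2^2+\beta\|\gamma\nabla v(t)-\nabla p(t)\|_2^2$. Inequality \eqref{171709} reads $\mathcal{J}(v(t),p(t))\ge\Lambda(Q(t))$, so the bound just obtained forces $\Lambda(Q(t))\le\Lambda(s^*-\delta)$ for all $t\ge0$, and it remains to describe the sublevel set $\{s\ge0:\Lambda(s)\le\Lambda(s^*-\delta)\}$. From \eqref{271321}--\eqref{271318}: $\Lambda(0)=0$; $\Lambda$ increases strictly on $[0,s^*]$ to its maximum $\Lambda(s^*)>0$; and $\Lambda$ decreases strictly on $[s^*,\infty)$ with $\Lambda(s)\to-\infty$ as $s\to\infty$, since the terms $s^{(n_i+1)/2}$ dominate the linear term for $n_i>1$. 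Hence the level $\Lambda(s^*-\delta)\in(0,\Lambda(s^*))$ is attained exactly twice, at $s^*-\delta$ and at some $s^{**}>s^*$, and
\[
\{s\ge0:\Lambda(s)\le\Lambda(s^*-\delta)\}=[0,\,s^*-\delta]\cup[s^{**},\,\infty),
\]
a disconnected set whose two components are separated by the nonempty gap $(s^*-\delta,s^{**})$.

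I would then close the argument by continuity. Since $(v,p)\in C([0,\infty);\mathcal{V})$, the map $t\mapsto Q(t)$ is continuous, so its image $Q([0,\infty))$ is connected and, by the previous step, contained in $[0,s^*-\delta]\cup[s^{**},\infty)$; as $Q(0)\le s^*-\delta$ lies in the first component, the whole image lies there, i.e.\ $Q(t)\le s^*-\delta$ for all $t\ge0$. Together with $\mathcal{J}(v(t),p(t))\le\Lambda(s^*-\delta)$, this is precisely $(v(t),p(t))\in\tilde{\mathcal{W}}^{\delta}_1$ for every $t\ge0$, which is the claim.

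The routine ingredients are Theorem~\ref{thmg}$(i)$--$(ii)$ and the continuity of $Q$, which --- unlike the continuity of the $L^{n_i+1}$-norms needed inside the proof of Theorem~\ref{thmg} --- is immediate because $Q$ involves only $H^1$-seminorms. The step that requires care is the elementary analysis of $\Lambda$: one must know precisely that its sublevel set at the level $\Lambda(s^*-\delta)$ splits into two components, because this two-component structure, together with the fact that the initial data sit in the lower one, is the engine of the whole invariance argument; the only other point needing attention is calibrating $\delta$ so that $s^*-\delta>0$ and $\Lambda(s^*-\delta)<d$, keeping Theorem~\ref{thmg} applicable.
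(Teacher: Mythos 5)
Your proposal is correct and follows essentially the same route as the paper: invoke Theorem~\ref{thmg} (via $\tilde{\mathcal{W}}^{\delta}_1\subset\mathcal{W}_1$ and $\Lambda(s^*-\delta)<\Lambda(s^*)\leq d$) to get a global solution with $\mathcal{J}(v,p)\leq\mathcal{E}(0)\leq\Lambda(s^*-\delta)$, then use the lower bound $\mathcal{J}\geq\Lambda(Q)$ from \eqref{171709} together with continuity of $Q(t)$ and strict monotonicity of $\Lambda$ on $(0,s^*)$ to trap $Q(t)$ in $[0,s^*-\delta]$. The only cosmetic difference is that you phrase the trapping as a connectedness argument on the sublevel set of $\Lambda$, whereas the paper runs the equivalent contradiction argument assuming $Q(t_1)=s^*-\delta+\delta_0$ with $\delta_0\in(0,\delta)$.
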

\begin{proof}
Since $\Lambda(t)$ is strictly increasing on $(0,s^*)$, one gets
that $\mathcal E(0) \leq \Lambda(s^*-\delta) < d$ due to
\eqref{171726}. Due to $(v_0,p_0)\in\tilde{\mathcal{W}}^{\delta}_1
\subset \mathcal W_1$, there exists a global solution $(v,p)$ with
\[\mathcal{J}(v,p) \leq   \mathcal E(0) \leq \Lambda(s^*-\delta)\] for all $t\geq0$ by Theorem \ref{thmg}. It
remains to show that
\[\alpha_1\|\nabla v\|_2^2+\beta\|\gamma\nabla v-\nabla p\|_2^2\leq s^*-\delta,\quad\text{ for all }t\geq 0.\]
 Since
$\alpha_1\|\nabla v_0\|_2^2+\beta\|\gamma\nabla v_0-\nabla
p_0\|_2^2\leq s^*-\delta$ and $(v,p)\in
C(\mathbb{R}^+,\mathcal{V})$, we assume to the contrary that there
exists   $t_1>0$ such that $\alpha_1\|\nabla
v(t_1)\|_2^2+\beta\|\gamma\nabla v(t_1)-\nabla
p(t_1)\|_2^2=s^*-\delta+\delta_0$ for some $\delta_0\in(0,\delta)$.
Recall \eqref{171709}, then
\[\mathcal{J}(v(t_1),p(t_1))\geq\Lambda(s^*-\delta+\delta_0)>\Lambda(s^*-\delta)\]
by that $\Lambda(t)$ is strictly increasing on $(0,s^*)$. This
contradicts $\mathcal{J}(v,p)\leq\Lambda(s^*-\delta)$ for any
$t\geq0$.
\end{proof}

We need to borrow the following crucial lemma to prove Theorem
\ref{thm2}.
 \begin{lemma}[\cite{M1999}]\label{lem111}
Let $E: \mathbb{R}^+\to\mathbb{R}^+$ be a non-increasing function
and $\psi : \mathbb{R}^+\to\mathbb{R}^+$ be a strictly increasing
function of class $C^1$ such that
\[\psi(0)=0\text{ and }\psi(t)\to+\infty\text{ as }t\to+\infty.\]
Assume that there exist $\sigma \geq  0$ and $\omega > 0$ such
that$:$
 \[\int_t^{+\infty}(E(s))^{1+\sigma}\psi^{\prime}(s)ds\leq  \frac{1}{\omega}(E(0))^\sigma E(t),\]
 then $E$ has the following decay property$:$
\begin{enumerate}[$(1)$]
  \item if $\sigma=0,$ then $E(t)\leq  E(0)e^{1-\omega\psi(t)}$ for all $t\geq  0;$
  \item if $\sigma>0,$ then $E(t)\leq  E(0)\left(\frac{1+\sigma}{1+\omega\sigma\psi(t)}\right)^{\frac{1}{\sigma}}$ for all $t\geq  0$.
\end{enumerate}
\end{lemma}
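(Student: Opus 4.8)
This is the classical integral inequality of \cite{M1999}; we recall how the argument goes. We may assume $E(0)>0$ and $E\not\equiv 0$, otherwise the claim is trivial. The plan is to reduce the pointwise decay of $E$ to a Bernoulli-type differential inequality for the tail integral
\[
R(t):=\int_t^{+\infty}\bigl(E(s)\bigr)^{1+\sigma}\psi'(s)\,ds .
\]
By hypothesis this is finite for every $t\ge 0$; moreover $R$ is non-increasing and absolutely continuous with $R'(t)=-\bigl(E(t)\bigr)^{1+\sigma}\psi'(t)$ a.e., and $R(t)\le\frac1\omega\bigl(E(0)\bigr)^{\sigma}E(t)$ for all $t$, so in particular $R(0)\le\frac1\omega\bigl(E(0)\bigr)^{1+\sigma}$. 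If $R(t_0)=0$ for some $t_0$ then $R\equiv 0$ on $[t_0,\infty)$, whence $E\equiv 0$ there and the conclusion is automatic; so I may assume $R>0$.

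First I would use that $E$ is non-increasing to close an inequality in $R$ alone. From $R(t)\le\frac1\omega\bigl(E(0)\bigr)^{\sigma}E(t)$ we get $E(t)\ge\frac{\omega}{(E(0))^{\sigma}}R(t)$, hence
\[
-R'(t)=\bigl(E(t)\bigr)^{1+\sigma}\psi'(t)\ \ge\ \Bigl(\tfrac{\omega}{(E(0))^{\sigma}}\Bigr)^{1+\sigma}\bigl(R(t)\bigr)^{1+\sigma}\psi'(t)\qquad\text{a.e.}
\]
When $\sigma=0$ this reads $(\ln R)'\le-\omega\psi'$, so integrating and using $\psi(0)=0$ gives $R(t)\le R(0)e^{-\omega\psi(t)}\le\frac1\omega E(0)e^{-\omega\psi(t)}$. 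When $\sigma>0$, dividing by $\bigl(R(t)\bigr)^{1+\sigma}$ yields $\frac{d}{dt}\bigl(R^{-\sigma}\bigr)\ge\sigma\bigl(\tfrac{\omega}{(E(0))^{\sigma}}\bigr)^{1+\sigma}\psi'$; integrating over $[0,t]$ and inserting $R(0)\le\frac1\omega\bigl(E(0)\bigr)^{1+\sigma}$ gives
\[
R(t)\ \le\ \frac1\omega\bigl(E(0)\bigr)^{1+\sigma}\bigl(1+\sigma\omega\psi(t)\bigr)^{-1/\sigma}.
\]

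Finally I would turn these tail bounds into pointwise bounds for $E$. Using once more that $E$ is non-increasing, for any $0\le t<t'$ we have $R(t)\ge\bigl(E(t')\bigr)^{1+\sigma}\bigl(\psi(t')-\psi(t)\bigr)$, i.e. $\bigl(E(t')\bigr)^{1+\sigma}\le R(t)/\bigl(\psi(t')-\psi(t)\bigr)$. Substituting the bound on $R(t)$ and minimising the right-hand side over the free parameter $x:=\psi(t)\in[0,\psi(t'))$ — the optimal choice being $x=\psi(t')-\frac1\omega$ when $\sigma=0$, and $x=\frac{1}{1+\sigma}\psi(t')-\frac{1}{\omega(1+\sigma)}$ when $\sigma>0$ — one computes that the minimum of the weight equals $\omega e^{1-\omega\psi(t')}$, respectively $\omega(1+\sigma)^{1+1/\sigma}\bigl(1+\sigma\omega\psi(t')\bigr)^{-1-1/\sigma}$. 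Multiplying by $\frac1\omega\bigl(E(0)\bigr)^{1+\sigma}$ and taking the $(1+\sigma)$-th root then produces exactly $E(t')\le E(0)e^{1-\omega\psi(t')}$ for $\sigma=0$ and $E(t')\le E(0)\bigl(\frac{1+\sigma}{1+\omega\sigma\psi(t')}\bigr)^{1/\sigma}$ for $\sigma>0$. In the regime $\omega\psi(t')<1$ the optimal $x$ would be negative; there the asserted right-hand side already exceeds $E(0)\ge E(t')$, so the bound is automatic. The one point needing care is this elementary optimisation, where one must verify that the constants collapse to the stated form and treat the small-$\psi(t')$ case separately; the rest is a routine integration of the differential inequality.
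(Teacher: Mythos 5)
The paper states this lemma as a citation to \cite{M1999} and gives no proof of its own, and your argument is a correct, self-contained reconstruction of the standard Martinez--Komornik proof: the Bernoulli differential inequality for the weighted tail $R(t)=\int_t^{\infty}E^{1+\sigma}\psi'\,ds$, followed by the elementary optimisation over the cut point, with the small-$\psi(t')$ regime correctly dismissed as trivial. The only (negligible) rough edge is the claim that $R(t_0)=0$ forces $E(t_0)=0$ — it only forces $E\equiv 0$ on $(t_0,\infty)$ — but the asserted bound at $t_0$ itself still follows by taking cut points $t<t_0$ and passing to the limit, so nothing is lost.
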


\begin{proofth2}
By Definition \ref{def1}, $v_t \in L^{m_1+1}(\Omega\times(0,T))$.
Then we have
\begin{align}\label{6-9-1}
\int^T_0\|v\|^{m_1+1}_{m_1+1}dt&= \int^T_0\int_\Omega\left|\int^t_0v_t(s)ds+v_0\right|^{m_1+1}dxdt\nonumber\\
&\leq
C(T^{m_1+1}\|v_t(t)\|^{m_1+1}_{m_1+1}+T\|v_0\|^{m_1+1}_{m_1+1})<\infty.
\end{align}
This implies $v\in L^{m_1+1}(\Omega\times(0,T))$ for all $T\geq0$.
Likewise, $p \in L^{m_2+1}(\Omega\times(0,T))$.  In this situation,
we can replace $\varphi$ by $\chi(t)v(t)\mathcal{E}^\eta(t)$ in
\eqref{171146} and $\psi$ by $\chi(t)p(t)\mathcal{E}^\eta(t)$ in
\eqref{171147}, where $\eta\ge 0$ will be decided later, then adding
the results yields
\begin{align}
\label{2313481}
&\rho\mathcal{E}^\eta(T)\chi(T)\int_\Omega v_t(T)v(T)dx-\rho\mathcal{E}^\eta(S)\chi(S)\int_\Omega v_t(S)v(S)dx\notag\\
&\quad-\rho\int_S^T\int_\Omega v_{t}(t)(\chi(t)v(t)\mathcal{E}^\eta(t))_tdxdt+\mu\mathcal{E}^\eta(T)\chi(T)\int_\Omega p_{t}(T)p(T)dx\notag\\
&\quad-\mu\mathcal{E}^\eta(S)\chi(S)\int_\Omega p_{t}(S)p(S)dx-\mu\int_S^T\int_\Omega p_{t}(t)(\chi(t)p(t)\mathcal{E}^\eta(t))_tdxdt\notag\\
&\quad+\int_S^T\chi(t)\mathcal{E}^\eta(t)\Big(\alpha_1\|\nabla v\|_2^2+\beta\|\gamma \nabla v-\nabla p|_2^2\Big)dt\notag\\
&\quad+\int_S^T\chi(t)\mathcal{E}^\eta(t)\int_\Omega \Big(|v_t|^{m_1-1}v_tv+|p_t|^{m_2-1}p_tp\Big)dxdt\notag\\
&=\int_S^T\chi(t)\mathcal{E}^\eta(t)\Big(\|v\|_{n_1+1}^{n_1+1}+\|p\|_{n_2+1}^{n_2+1}\Big)dt.
\end{align}
Let us combine \eqref{231213} with \eqref{2313481}, then
\begin{align}
\label{231348}
&2\int_S^T\chi(t)\mathcal{E}^{\eta+1}(t)dt=\underbrace{-\rho\mathcal{E}^\eta(T)\chi(T)\int_\Omega
v_t(T)v(T)dx+\rho\mathcal{E}^\eta(S)\chi(S)\int_\Omega
v_t(S)\varphi(S)dx}_{:=I_1}
\notag\\
&+\underbrace{\rho\int_S^T(\chi(t)\mathcal{E}^\eta(t))_t\int_\Omega v_{t}vdxdt}_{:=I_2}\underbrace{-\mu\mathcal{E}^\eta(T)\chi(T)\int_\Omega p_{t}(T)p(T)dx+\mu\mathcal{E}^\eta(S)\chi(S)\int_\Omega p_{t}(S)p(S)dx}_{:=I_3}\notag\\
& +\underbrace{\mu \int_S^T(\chi(t)\mathcal{E}^\eta(t))_t\int_\Omega
p_{t}pdxdt}_{:=I_4}+\underbrace{(1+\rho)\int_S^T\chi(t)\mathcal{E}^\eta(t)\|v_t\|^2_2dt}_{:=I_5}
+\underbrace{(1+\mu)\int_S^T\chi(t)\mathcal{E}^\eta(t)\|p_t\|^2_2dt}_{:=I_6}\notag\\
&\underbrace{-\int_S^T\chi(t)\mathcal{E}^\eta(t)\int_\Omega
|v_t|^{m_1-1}v_tvdxdt}_{:=I_7}
\underbrace{-\int_S^T\chi(t)\mathcal{E}^\eta(t)\int_\Omega|p_t|^{m_2-1}p_tpdxdt}_{:=I_8}\notag\\
&+\underbrace{\int_S^T\chi(t)\mathcal{E}^\eta(t)\Big(\|v\|_{n_1+1}^{n_1+1}+\|p\|_{n_2+1}^{n_2+1}-\frac{2}{n_1+1}\|v\|_{n_1+1}^{n_1+1}-\frac{2}{n_2+1}\|p\|_{n_2+1}^{n_2+1}\Big)dt}_{:=I_9}.
\end{align}

In the sequel, we will estimate terms $I_i$ one by one.

\textbf{(1) Estimate for $I_1$ and $I_3$}\\
By using \eqref{231220}, Cauchy-Schwarz inequality, Poincar\'e's
inequality \eqref{2.1},  \eqref{12161346},  \eqref{12161914} and
Theorem \ref{thmg}$(iv)$, we get
\begin{align}
\label{231509}
|I_1|&\le \frac{\rho}{2}\chi(0)\mathcal{E}^\eta(0)(\|v(T)\|_2^2+\|v_t(T)\|_2^2+\|v(S)\|_2^2+\|v_t(S)\|_2^2)\notag\\
&\le\frac{\rho}{2}\chi(0)\mathcal{E}^\eta(0)(c^2\|\nabla v(T)\|_2^2+\|v_t(T)\|_2^2+c^2\|\nabla v(S)\|_2^2+\|v_t(S)\|_2^2)\notag\\
&\le \rho\chi(0)\mathcal{E}^\eta(0)\Big[2c^2\max\Big\{\frac{2\gamma^2+1}{\alpha_1},\frac2\beta\Big\}+2\Big]E(S)\notag\\
&\le
\rho\chi(0)\mathcal{E}^\eta(0)\Big[2c^2\max\Big\{\frac{2\gamma^2+1}{\alpha_1},\frac2\beta\Big\}+2\Big]\frac{\hat{c}}{\hat{c}-2}\mathcal{E}(S).
\end{align}
By the same procedure, one has
\begin{equation}
\label{231542}
\begin{split}
|I_3|\le\mu\chi(0)\mathcal{E}^\eta(0)\Big[2c^2\max\Big\{\frac{2\gamma^2+1}{\alpha_1},\frac2\beta\Big\}+2\Big]\frac{\hat{c}}{\hat{c}-2}\mathcal{E}(S).
\end{split}
\end{equation}

\textbf{(2) Estimate for $I_2$ and $I_4$}\\
Integration by parts gives
\begin{equation*}
\label{241403}
\begin{split}
I_{2}=\underbrace{\rho\eta\int_S^T\chi(t)\mathcal{E}^{\eta-1}(t)\mathcal{E}'(t)\int_\Omega
v_{t}vdxdt}_{:=I_{21}}+\underbrace{\rho\int_S^T\chi'(t)\mathcal{E}^{\eta}(t)\int_\Omega
v_{t}vdxdt}_{:=I_{22}}.
\end{split}
\end{equation*}
Using again \eqref{231220}, Cauchy-Schwarz inequality, Poincar\'e's
inequality \eqref{2.1},  \eqref{12161346},  \eqref{12161914} and
Theorem $\ref{thmg}(iv)$, we get
\begin{align}
\label{2315510}
|I_{21}|&\le -\frac{\rho \eta}{2}\int_S^T\chi(t)\mathcal{E}^{\eta-1}(t)\mathcal{E}'(t)(\|v\|_2^2+\|v_t\|_2^2)dt\notag\\
&\le -\frac{\rho \eta}{2}\int_S^T\chi(t)\mathcal{E}^{\eta-1}(t)\mathcal{E}'(t)(c^2\|\nabla v\|_2^2+\|v_t\|_2^2)dt\notag\\
&\le -\rho \eta\Big[c^2\max\Big\{\frac{2\gamma^2+1}{\alpha_1},\frac2\beta\Big\}+1\Big]\int_S^T\chi(t)\mathcal{E}^{\eta-1}(t)\mathcal{E}'(t)E(t)dt\notag\\
&\le -\rho \eta\Big[c^2\max\Big\{\frac{2\gamma^2+1}{\alpha_1},\frac2\beta\Big\}+1\Big]\frac{\hat{c}}{\hat{c}-2}\int_S^T\chi(t)\mathcal{E}^{\eta}(t)\mathcal{E}'(t)dt\notag\\
&\le \frac{\rho \eta}{\eta+1}\Big[c^2\max\Big\{\frac{2\gamma^2+1}{\alpha_1},\frac2\beta\Big\}+1\Big]\frac{\hat{c}}{\hat{c}-2}\chi(0)\mathcal{E}^{\eta+1}(S)\notag\\
&\le \frac{\rho
\eta}{\eta+1}\Big[c^2\max\Big\{\frac{2\gamma^2+1}{\alpha_1},\frac2\beta\Big\}+1\Big]\frac{\hat{c}}{\hat{c}-2}\mathcal{E}^{\eta}(0)\chi(0)\mathcal{E}(S),
\end{align}
\begin{align}
\label{241408}
|I_{22}|&\le -\frac\rho2\int_S^T\chi'(t)\mathcal{E}^{\eta}(t)(\|v\|_2^2+\|v_t\|_2^2)dt\notag\\
&\le -\frac{\rho}{2}\int_S^T\chi'(t)\mathcal{E}^{\eta}(t)(c^2\|\nabla v\|_2^2+\|v_t\|_2^2)dt\notag\\
&\le -\rho \Big[c^2\max\Big\{\frac{2\gamma^2+1}{\alpha_1},\frac2\beta\Big\}+1\Big]\int_S^T\chi'(t)\mathcal{E}^{\eta}(t)E(t)dt\notag\\
&\le -\rho \Big[c^2\max\Big\{\frac{2\gamma^2+1}{\alpha_1},\frac2\beta\Big\}+1\Big]\frac{\hat{c}}{\hat{c}-2}\int_S^T\chi'(t)\mathcal{E}^{\eta+1}(t)dt\notag\\
&\le -\rho \Big[c^2\max\Big\{\frac{2\gamma^2+1}{\alpha_1},\frac2\beta\Big\}+1\Big]\frac{\hat{c}}{\hat{c}-2}\Big[\chi(t)\mathcal{E}^{\eta+1}(t)|_S^T-(\eta+1)\int_S^T\chi(t)\mathcal{E}^{\eta}(t)\mathcal{E}'(t)dt\Big]\notag\\
&\le
\rho\Big[c^2\max\Big\{\frac{2\gamma^2+1}{\alpha_1},\frac2\beta\Big\}+1\Big]\frac{\hat{c}}{\hat{c}-2}\mathcal{E}^{\eta}(0)\chi(0)\mathcal{E}(S).
\end{align}
Combining \eqref{2315510} with \eqref{241408}, one has
\begin{equation}
\label{231551} |I_2|\le \frac{\rho
(2\eta+1)}{\eta+1}\Big[c^2\max\Big\{\frac{2\gamma^2+1}{\alpha_1},\frac2\beta\Big\}+1\Big]\frac{\hat{c}}{\hat{c}-2}\mathcal{E}^{\eta}(0)\chi(0)\mathcal{E}(S).
\end{equation}

By the same procedure, one has
\begin{equation}
\label{2316}
\begin{split}
|I_4|\le \frac{\mu
(2\eta+1)}{\eta+1}\Big[c^2\max\Big\{\frac{2\gamma^2+1}{\alpha_1},\frac2\beta\Big\}+1\Big]\frac{\hat{c}}{\hat{c}-2}\mathcal{E}^{\eta}(0)\chi(0)\mathcal{E}(S).
\end{split}
\end{equation}

\textbf{(3)  Estimate for $I_5$ and $I_6$}\\
We have to consider the following two cases.

\textbf{Case 1: Both $m_1$ and $m_2$ are equal to 1.} It follows
from \eqref{231219} that
\begin{align}
\label{231622}
I_5+I_6&\le \max\{(1+\rho),(1+\mu)\}\chi(0)\int_S^T\mathcal{E}^{\eta}(t)(-\mathcal{E}'(t))dt\notag\\
&\le
\frac{\max\{(1+\rho),(1+\mu)\}}{\eta+1}\mathcal{E}^{\eta}(0)\chi(0)\mathcal{E}(S).
\end{align}

\textbf{Case 2: At least one of $m_1$ and $m_2$  is not equal to 1.}
For this case, we need to  distinguish two subclasses.

$(i)$ The first one is $m:=m_1=m_2>1$, or $m_1>m_2=1$, or
$m_2>m_1=1$.

For $m_1=m_2>1$, using H\"older's and Young's inequalities, and
\eqref{231219},  then
\begin{align}
\label{231654}
I_5&\le (1+\rho)|\Omega|^{\frac{m_1-1}{m_1+1}}\int_S^T\chi(t)\mathcal{E}^{\eta}(t)\|v_t\|_{m_1+1}^2dt\notag\\
&\le \epsilon(1+\rho)|\Omega|^{\frac{m_1-1}{m_1+1}}\int_S^T\chi(t)\mathcal{E}^{\eta\frac{m_1+1}{m_1-1}}(t)dt+C(\epsilon)(1+\rho)|\Omega|^{\frac{m_1-1}{m_1+1}}\int_S^T\chi(t)\|v_t\|_{m_1+1}^{m_1+1}dt\notag\\
&\le \epsilon(1+\rho)|\Omega|^{\frac{m_1-1}{m_1+1}}\int_S^T\chi(t)\mathcal{E}^{\eta\frac{m_1+1}{m_1-1}}(t)dt-C(\epsilon)(1+\rho)|\Omega|^{\frac{m_1-1}{m_1+1}}\int_S^T\chi(t)\mathcal{E}'(t)dt\notag\\
&\le
\epsilon(1+\rho)|\Omega|^{\frac{m_1-1}{m_1+1}}\int_S^T\chi(t)\mathcal{E}^{\eta\frac{m_1+1}{m_1-1}}(t)dt+C(\epsilon)(1+\rho)|\Omega|^{\frac{m_1-1}{m_1+1}}\chi(0)\mathcal{E}(S).
\end{align}
Likewise,
\begin{equation}
\label{231701}
\begin{split}
I_6\le
\epsilon(1+\mu)|\Omega|^{\frac{m_2-1}{m_2+1}}\int_S^T\chi(t)\mathcal{E}^{\eta\frac{m_2+1}{m_2-1}}(t)dt+C(\epsilon)(1+\mu)|\Omega|^{\frac{m_2-1}{m_2+1}}\chi(0)\mathcal{E}(S).
\end{split}
\end{equation}
Let $\eta=\frac{m_1-1}{2}=\frac{m_2-1}{2}$, then by combining
\eqref{231654} with \eqref{231701}, one has
\begin{equation}
\label{231707} I_5+I_6\le
\epsilon(2+\rho+\mu)|\Omega|^{\frac{m-1}{m+1}}\int_S^T\chi(t)\mathcal{E}^{\eta+1}(t)dt+C(\epsilon)(2+\rho+\mu)|\Omega|^{\frac{m-1}{m+1}}\chi(0)\mathcal{E}(S).
\end{equation}
For $m_1>m_2=1$, or $m_2>m_1=1$, we easily obtain the similar
results like \eqref{231707}.

$(ii)$ The second one is $m_1>m_2>1$ or $m_2>m_1>1$.

For $m_1>m_2>1$, let us choose $\eta=\frac{m_1-1}{2}$ in
\eqref{231654}, then
$\eta\frac{m_2+1}{m_2-1}>\eta+1=\frac{m_1+1}{2}$ in \eqref{231701}.
Thus \eqref{231701} can be rewritten as
\begin{align*}
I_6&\le
\epsilon(1+\mu)|\Omega|^{\frac{m_2-1}{m_2+1}}\mathcal{E}^{\eta\frac{m_2+1}{m_2-1}-(\eta+1)}(0)\int_S^T\chi(t)\mathcal{E}^{\eta+1}(t)dt+C(\epsilon)(1+\mu)|\Omega|^{\frac{m_2-1}{m_2+1}}\chi(0)\mathcal{E}(S).
\end{align*}
Therefore, we still have \eqref{231707}.

\textbf{(4)  Estimate for $I_7$ and $I_8$}

\textbf{Case 1. Both $m_1$ and $m_2$ are less than or equal to 5.}
It follows from Young's inequality, \eqref{231218}, embedding
inequality \eqref{12171637},  \eqref{12161346}  and Theorem
\ref{thmg}$(iv)$ that
\begin{align}
\label{231810}
&|I_7|\le C(\varepsilon)\int_S^T\chi(t)\mathcal{E}^\eta(t)\|v_t\|^{m_1+1}_{m_1+1}dt+\varepsilon\int_S^T\chi(t)\mathcal{E}^\eta(t)\|v\|^{m_1+1}_{m_1+1}dt\notag\\
&\le C(\varepsilon)\int_S^T\chi(t)\mathcal{E}^\eta(t)(-\mathcal{E}'(t))dt+\varepsilon B_1\int_S^T\chi(t)\mathcal{E}^\eta(t)\|\nabla v\|^{m_1+1}_2dt\notag\\
&\le \frac{C(\varepsilon)}{\eta+1}\mathcal{E}^\eta(0)\chi(0)\mathcal{E}(S)+\varepsilon B_1\Big(2\max\Big\{\frac{2\gamma^2+1}{\alpha_1},\frac2\beta\Big\} \Big)^{\frac{m_1+1}{2}}\int_S^T\chi(t)\mathcal{E}^\eta(t)E(t)^{\frac{m_1+1}{2}}dt\notag\\
&\le \frac{C(\varepsilon)}{\eta+1}\mathcal{E}^\eta(0)\chi(0)\mathcal{E}(S)\notag\\
&\quad+\varepsilon
B_1\Big(2\max\Big\{\frac{2\gamma^2+1}{\alpha_1},\frac2\beta\Big\}
\Big)^{\frac{m_1+1}{2}}\Big(\frac{\hat{c}}{\hat{c}-2}\mathcal{E}(0)\Big)^{\frac{m_1-1}{2}}\int_S^T\chi(t)\mathcal{E}^{\eta+1}(t)dt.
\end{align}
Likewise,
\begin{align}
\label{231946}
|I_8|&\le \frac{C(\varepsilon)}{\eta+1}\mathcal{E}^\eta(0)\chi(0)\mathcal{E}(S)\notag\\
&\quad+\varepsilon
B_2\Big(2\max\Big\{\frac{2\gamma^2+1}{\alpha_1},\frac2\beta\Big\}
\Big)^{\frac{m_2+1}{2}}\Big(\frac{\hat{c}}{\hat{c}-2}\mathcal{E}(0)\Big)^{\frac{m_2-1}{2}}\int_S^T\chi(t)\mathcal{E}^{\eta+1}(t)dt.
\end{align}

\textbf{Case 2. At least one of $m_1$ and $m_2$ is greater than 5.}

(1) For this case, let us first consider the case that  the
condition $v \in L^{\infty}(\mathbb R^+;
L^{\frac{3}{2}(m_1-1)}(\Omega))$ is imposed. It follows from Young's
and H\"older's inequality, \eqref{231218}, embedding inequality
\eqref{12171637},  \eqref{12161346}  and Theorem $\ref{thmg}(iv)$
that
\begin{align}
\label{241042}
&|I_7|\le C(\varepsilon)\int_S^T\chi(t)\mathcal{E}^\eta(t)\|v_t\|^{m_1+1}_{m_1+1}dt+\varepsilon\int_S^T\chi(t)\mathcal{E}^\eta(t)\|v\|^{m_1+1}_{m_1+1}dt\notag\\
&\le C(\varepsilon)\int_S^T\chi(t)\mathcal{E}^\eta(t)(-\mathcal{E}'(t))dt+\varepsilon\int_S^T\chi(t)\mathcal{E}^\eta(t)\|v\|^2_6\|v\|^{m_1-1}_{\frac{3}{2}(m_1-1)}dt\notag\\
&\le C(\varepsilon)\int_S^T\chi(t)\mathcal{E}^\eta(t)(-\mathcal{E}'(t))dt+\varepsilon\int_S^T\chi(t)\mathcal{E}^\eta(t)\|\nabla v\|^2_2\|v\|^{m_1-1}_{\frac{3}{2}(m_1-1)}dt\notag\\
&\le \frac{C(\varepsilon)}{\eta+1}\mathcal{E}^\eta(0)\chi(0)\mathcal{E}(S)+\varepsilon B_12\max\Big\{\frac{2\gamma^2+1}{\alpha_1},\frac2\beta\Big\} \max_{\substack{t\in \mathbb R^+ }}(\|v\|^{m_1-1}_{\frac{3}{2}(m_1-1)})\int_S^T\chi(t)\mathcal{E}^\eta(t)E(t)dt\notag\\
&\le \frac{C(\varepsilon)}{\eta+1}\mathcal{E}^\eta(0)\chi(0)\mathcal{E}(S)\notag\\
&\quad+\varepsilon
B_1\Big(2\max\Big\{\frac{2\gamma^2+1}{\alpha_1},\frac2\beta\Big\}
\Big)^{\frac{m_1+1}{2}}\frac{\hat{c}}{\hat{c}-2}\max_{\substack{t\in
\mathbb R^+
}}(\|v\|^{m_1-1}_{\frac{3}{2}(m_1-1)})\int_S^T\chi(t)\mathcal{E}^{\eta+1}(t)dt.
\end{align}
Likewise,
\begin{align}
\label{241050}
|I_8|&\le \frac{C(\varepsilon)}{\eta+1}\mathcal{E}^\eta(0)\chi(0)\mathcal{E}(S)\notag\\
&\quad+\varepsilon
B_2\Big(2\max\Big\{\frac{2\gamma^2+1}{\alpha_1},\frac2\beta\Big\}
\Big)^{\frac{m_1+1}{2}}\frac{\hat{c}}{\hat{c}-2}\max_{\substack{t\in
\mathbb R^+
}}(\|p\|^{m_2-1}_{\frac{3}{2}(m_2-1)})\int_S^T\chi(t)\mathcal{E}^{\eta+1}(t)dt.
\end{align}

(2) Second, the condition $v \in L^{\infty}(\mathbb R^+;
L^{\frac{3}{2}(m_1-1)}(\Omega))$ is not imposed. Here we  need to
borrow the idea from \cite[Proposition 2.3.]{HT2022} and
\cite{L2023} to prove
\begin{equation}
\label{931} \|v\|_{m_1+1}\le
2^{\frac{m_1}{m_1+1}}(\|v_0\|_{m_1+1}+t^{\frac{m_1}{m_1+1}}(\mathcal{E}(0))^{\frac{1}{m_1+1}})
\end{equation}
for $v_0\in L^{m_1+1}(\Omega)$. In fact, for $v\in
C^1([0,T];L^2(\Omega))$, one has
\[v=v(0)+\int_0^t v_t(s)ds,\quad \text{for all }t>0\text{ and a.e. }x\in \Omega.\]
Further, it follows that
\[|v|^{m_1+1}=2^{m_1}\Big[|v(0)|^{m_1+1}+\Big|\int_0^t v_t(s)ds\Big|^{m_1+1}\Big],\quad \text{for all }t>0\text{ and a.e. }x\in \Omega.\]
Using H\"{o}lder's inequality, and then integrating with respect to
$x$ over $\Omega$, one obtains
\begin{equation*}
\|v\|_{m_1+1}^{m_1+1}\le
2^{m_1}\Big[\|v(0)\|_{m_1+1}^{m_1+1}+t^{m_1}\int_0^t\|v_t(s)\|_{m_1+1}^{m_1+1}ds\Big].
\end{equation*}
From \eqref{231218}, it is clear to obtain \eqref{931}.

 Using H\"older's inequality,  \eqref{931}, Young's inequality and noticing that
\[\mathcal{E}^\eta(t)=\mathcal{E}^{\frac{\eta+1}{m_1+1}}(t)\times \mathcal{E}^{\frac{m_1\eta-1}{m_1+1}}(t),\]
one has
\begin{align}
\label{3.12}
|I_7|&\le \int_S^T\chi(t)\mathcal{E}^{\eta}(t)(-\mathcal{E}_t(t))^{\frac{m_1}{m_1+1}}\|v\|_{m_1+1}dt\notag\\
&\le  2^{\frac{m_1}{m_1+1}}\|v_0\|_{m+1}\int_S^T\chi(t)\mathcal{E}^{\eta}(t)(-\mathcal{E}_t(t))^{\frac{m_1}{m_1+1}}dt\notag\\
&\quad+2^{\frac{m_1}{m_1+1}}(\mathcal{E}(0))^{\frac{1}{m_1+1}}\int_S^T\chi(t)\mathcal{E}^{\eta}(t)(-\mathcal{E}_t(t))^{\frac{m_1}{m_1+1}}t^{\frac{m_1}{m_1+1}}dt\notag\\
&\le \varepsilon
C\int_S^T\chi(t)\mathcal{E}^{\eta+1}(t)dt+C(\varepsilon)\int_S^T\chi(t)\mathcal{E}^{\frac{m_1\eta-1}{m_1}}(t)(-\mathcal{E}_t(t))(1+t)dt
\notag\\
&\le\varepsilon C\int_S^T\chi(t)\mathcal{E}^{\eta+1}(t)dt
+C(\varepsilon)\mathcal{E}^{\frac{m_1\eta-1}{m_1}}(0)\int_S^T\chi(t)(-\mathcal{E}_t(t))(1+t)dt
\notag\\
&\le\varepsilon C\int_S^T\chi(t)\mathcal{E}^{\eta+1}(t)dt
+C(\varepsilon)\mathcal{E}^{\frac{m_1\eta-1}{m_1}}(0)\mathcal{E}(S).
\end{align}
Here $C=\max\{\|v_0\|_{m_1+1},(\mathcal E(0))^{\frac{1}{m_1+1}}\}.$
And in the last step, we have to use the condition that
$\chi(t)(1+t)$ is uniformly bounded. Likewise,
\begin{equation}
\label{3.1211}
\begin{split}
|I_8|\le\varepsilon C\int_S^T\chi(t)\mathcal{E}^{\eta+1}(t)dt
+C(\varepsilon)\mathcal{E}^{\frac{m_2\eta-1}{m_2}}(0)\mathcal{E}(S).
\end{split}
\end{equation}

\textbf{(4)  Estimate for $I_9$}

Since $(u_0,w_0)\in\tilde{\mathcal{W}}^{\delta}_1$ and
$\mathcal{E}(0)\leq\Lambda(s^*-\delta)$, we have $(v,p)\in
\tilde{\mathcal{W}}^{\delta}_1$ for all $t\geq0$ by Lemma
\ref{lem3-3}. Further,
\begin{align}   \label{241029}
\alpha_1\|\nabla v\|_2^2+\beta\|\gamma\nabla v-\nabla p\|_2^2\leq
s_*-\delta, \;\; \text{for all} \,\, t\geq 0.
\end{align}
It follows from \eqref{171608}, \eqref{241029} and Theorem
\ref{thmg}$(iv)$ that
\begin{align}
\label{2312046}
&|I_9|=2\int_S^T\chi(t)\mathcal{E}^\eta(t)\Big[\Big(\frac12-\frac{1}{n_1+1}\Big)\|v\|_{n_1+1}^{n_1+1}+\Big(\frac12-\frac{1}{n_2+1}\Big)\|p\|_{n_2+1}^{n_2+1}\Big)\Big]dt\notag\\
&\le 2\int_S^T\chi(t)\mathcal{E}^\eta(t)(\alpha_1\|\nabla v\|_2^2+\beta\|\gamma\nabla v-\nabla p\|_2^2)\notag\\
&\quad\times\Big(\hat{C}\Big[\Big(\frac12-\frac{1}{n_1+1}\Big)(s_*-\delta)^{\frac{n_1-1}{2}}+\Big(\frac12-\frac{1}{n_2+1}\Big)(s_*-\delta)^{\frac{n_2-1}{2}}\Big]\Big)dt\notag\\
&\le
2\frac{2\hat{c}}{\hat{c}-2}\Big(\hat{C}\Big[\Big(\frac12-\frac{1}{n_1+1}\Big)(s_*-\delta)^{\frac{n_1-1}{2}}+\Big(\frac12-\frac{1}{n_2+1}\Big)(s_*-\delta)^{\frac{n_2-1}{2}}\Big]\Big)\int_S^T\chi(t)\mathcal{E}^{\eta+1}(t)dt.
\end{align}
Since
$\hat{C}\Big(s_*^{\frac{n_1-1}{2}}+s_*^{\frac{n_2-1}{2}}\Big)=1$, $n_1>1$ and $n_2>1$, it
is easy to get that for any small $\delta>0$,
\[\tilde{C}(\delta):=\hat{C}\Big[(s_*-\delta)^{\frac{n_1-1}{2}}+(s_*-\delta)^{\frac{n_2-1}{2}}\Big]<1.\]
Now we take $\delta>0$ satisfying
\begin{align}\label{delta-1}
\tilde{C}(\delta)\cdot\max\Big\{\frac{n_2+1}{n_2-1}\cdot\frac{n_1-1}{n_1+1},\frac{n_1+1}{n_1-1}\cdot\frac{n_2-1}{n_2+1}\Big\}<1,
\end{align}
which implies
\[C(\delta):=\frac{2\hat{c}}{\hat{c}-2}\Big(\hat{C}\Big[\Big(\frac12-\frac{1}{n_1+1}\Big)(s_*-\delta)^{\frac{n_1-1}{2}}+\Big(\frac12-\frac{1}{n_2+1}\Big)(s_*-\delta)^{\frac{n_2-1}{2}}\Big]\Big)<1.\]

\textbf{Case 1: Both $m_1$ and $m_2$ are equal to 1.}

Inserting  \eqref{231509}, \eqref{231542},
\eqref{231551}-\eqref{231622}, \eqref{231810}, \eqref{231946} and
\eqref{2312046} into \eqref{231348}, and recalling $\eta=0$,
choosing $\chi(t)\equiv 1$,  then
\begin{equation*}
\label{241121}
\begin{split}
2(1-C(\delta))\int_S^T\mathcal{E}(t)dt\le
C(\varepsilon)\mathcal{E}(S)+\varepsilon C\int_S^T\mathcal{E}(t)dt.
\end{split}
\end{equation*}
Choosing $\varepsilon$ small enough such that $\varepsilon
C=\frac{2(1-C(\delta))}{2}$, and letting $T$ go to $\infty$, one
obtains
\begin{equation}
\label{241134}
\begin{split}
\int_S^\infty\mathcal{E}(t)dt\le
\frac{2}{2(1-C(\delta))}C(\varepsilon)\mathcal{E}(S).
\end{split}
\end{equation}
Therefore, using Lemma \ref{lem111} for \eqref{241134}, one has
\[\mathcal{E}(t)\le \mathcal{E}(0)e^{1-\omega t}\]
with $\omega=\frac{1-C(\delta)}{C(\varepsilon)}.$

\textbf{Case 2: At least one of $m_1$ and $m_2$  is not equal to 1.}

(1) Inserting  \eqref{231509}, \eqref{231542}, \eqref{231551},
\eqref{2316}, \eqref{231707}, \eqref{231810}, \eqref{231946}  or
\eqref{241042}, \eqref{241050} and \eqref{2312046} into
\eqref{231348}, and choosing $\chi(t)\equiv 1$, then one has
\begin{equation*}
\label{241147}
\begin{split}
2(1-C(\delta))\int_S^T\mathcal{E}^{\eta+1}(t)dt\le
C(\epsilon,\varepsilon)\mathcal{E}^\eta(0)\mathcal{E}(S)+(\varepsilon
C(\mathcal{E}(0))+\epsilon C)\int_S^T\mathcal{E}^{\eta+1}(t)dt.
\end{split}
\end{equation*}
Choosing $\epsilon,~\varepsilon$ small enough such that $\varepsilon
C(\mathcal{E}(0))+\epsilon C=\frac{2(1-C(\delta))}{2}$,
$\eta=\max\{\frac{m_1-1}{2},\frac{m_2-1}{2}\}$ and letting $T$ go to
$\infty$, one obtains
\begin{equation}
\label{241148}
\begin{split}
\int_S^\infty\mathcal{E}^{\eta+1}(t)dt\le
\frac{2}{2(1-C(\delta))}C(\epsilon,\varepsilon)\mathcal{E}^\eta(0)\mathcal{E}(S).
\end{split}
\end{equation}
Therefore, using Lemma \ref{lem111}  for \eqref{241148}, we have
\[\mathcal{E}(t)\le \mathcal{E}(0)\Big(\frac{1+\eta}{1+\omega \eta t}\Big)^{\frac{1}{\eta}}\]
with $\omega=\frac{1-C(\delta)}{C(\epsilon,\varepsilon)}$.

(2) Inserting   \eqref{231509}, \eqref{231542}, \eqref{231551},
\eqref{2316}, \eqref{231707}, \eqref{3.12}, \eqref{3.1211} and
\eqref{2312046} into \eqref{231348},  then one has
\begin{align*}
&2(1-C(\delta))\int_S^T\chi(t)\mathcal{E}^{\eta+1}(t)dt\notag\\
&\quad\le
C(\epsilon,\varepsilon)\mathcal{E}^\eta(0)\mathcal{E}(S)+(\varepsilon
C(\mathcal{E}(0))+\epsilon
C)\int_S^T\chi(t)\mathcal{E}^{\eta+1}(t)dt.
\end{align*}
Choosing $\epsilon,~\varepsilon$ small enough such that $\varepsilon
C(\mathcal{E}(0))+\epsilon C=\frac{2(1-C(\delta))}{2}$,
$\eta=\max\{\frac{m_1-1}{2},\frac{m_2-1}{2}\}$ and letting $T$ go to
$\infty$, one obtains
\begin{equation}
\label{241535}
\begin{split}
\int_S^\infty\chi(t)\mathcal{E}^{\eta+1}(t)dt\le
\frac{2}{2(1-C(\delta))}C(\epsilon,\varepsilon)\mathcal{E}^\eta(0)\mathcal{E}(S).
\end{split}
\end{equation}
Choosing $\psi(t)=\int_0^t\chi(s)ds$ in Lemma \ref{lem111} for
\eqref{241535}, one directly obtains
\[\mathcal{E}(t)\leq  \mathcal{E}(0)\left(\frac{1+\eta}{1+\omega\eta\psi(t)}\right)^{\frac{1}{\eta}}\]
with $\omega=\frac{1-C(\delta)}{C(\epsilon,\varepsilon)}$.

\end{proofth2}

\section{Blow-up in finite time}\label{sec6}

In this section, we are devoted to proving Theorems \ref{thmb1},
\ref{thm6-2}, \ref{thm2.10} in three different initial total energy
if the source terms are more dominant than damping terms.

\subsection{Blow-up in finite time for negative initial energy}\label{sec6.1}
Let $(v,p)$ be a weak solution of problem \eqref{1.1} in the sense
of Definition \ref{def1}. We define the life span $T_{max}$ of the
solution $(v,p)$ to be the supremum of all $T>0$ such that $(v,p)$
is a solution to problem \eqref{1.1} in the sense of Definition
\ref{def1} on $[0, T]$. In the following, we are devoted to showing
that $T_{max}$ is finite and obtain an upper bound for the life span
of solutions.

The main idea is originated from \cite{G1994}. The key of the proof
is to choose a suitable Lyapunov's functional. Define
$$
G(t)=-\mathcal{E}(t),
$$
and
\begin{equation}
\label{271513}
N(t)=\frac{1}{2}\left(\rho\|v\|^2_2+\mu\|p\|^2_2\right).
\end{equation}
It follows from \eqref{231219} that
\begin{align}\label{6-6}
G'(t)&=\|v_t\|_{m_1+1}^{m_1+1}+\|p_t\|_{m_2+1}^{m_2+1}\ge 0,
\end{align}
which implies $G(t)$ is non-decreasing. In view of
$G(0)=-\mathcal{E}(0)>0$, then
\begin{align}\label{6-7}
&0<G(0)\leq G(t)=-\mathcal{E}(t)=-E(t)+\frac{1}{n_1+1}\|v\|_{n_1+1}^{n_1+1}+\frac{1}{n_2+1}\|p\|_{n_2+1}^{n_2+1}\notag\\
&\quad\le
\frac{1}{n_1+1}\|v\|_{n_1+1}^{n_1+1}+\frac{1}{n_2+1}\|p\|_{n_2+1}^{n_2+1}\le
\frac{1}{\hat{c}}(\|v\|_{n_1+1}^{n_1+1}+\|p\|_{n_2+1}^{n_2+1}).
\end{align}

Define
\begin{align*}
Y(t):=G^{1-\varpi}(t)+\varepsilon N'(t),
\end{align*}
where $0<\varepsilon\leq\min\{1,G(0)\}$ will be determined later,
\begin{align}\label{6-4}
0<\varpi<\min\left\{\frac{1}{m_1+1}-\frac{1}{n_1+1},
\;\;\frac{1}{m_2+1}-\frac{1}{n_2+1}, \;\;\frac{n_1-1}{2(n_1+1)},
\;\;\frac{n_2-1}{2(n_2+1)}\right\},
\end{align}
and
\begin{equation}
\label{261438} N'(t)=\rho\int_\Omega v(t)v_t(t)dx+\mu\int_\Omega
p(t)p_t(t)dx.
\end{equation}
Our aim is to show that $Y(t)$ approaches infinity in finite time.
First,  we verify that
\begin{align}\label{6-8}
Y'(t)=(1-\varpi)G^{-\varpi}(t)G'(t)+\varepsilon N''(t),
\end{align}
where
\begin{align}\label{6-9}
N''(t)&=\rho\|v_t\|_2^2+\mu\|p_{t}\|_2^2-(\alpha_1\|\nabla v\|_2^2+\beta\|\gamma\nabla v-\nabla p\|_2^2)\notag\\
&\quad-\int_\Omega |v_t|^{m_1-1}v_tvdx-\int_\Omega
|p_t|^{m_2-1}p_tpdx+\|v(t)\|_{n_1+1}^{n_1+1}+\|p(t)\|_{n_2+1}^{n_2+1}.
\end{align}
 $N''(t)$ can be established formally by differentiating $N'(t)$ in \eqref{261438} and using equations in (\ref{1.1}), however, this formal procedure is not rigorous due to the lack of the regularity of $v_{tt}(t)$ and $p_{tt}(t)$.

 In what follows, let us give the rigorous proof of \eqref{6-9}. Due to $v_0\in H_{\Gamma_0}^1(\Omega) \hookrightarrow L^6(\Omega)$, one has  $v_0\in L^{m_1+1}(\Omega)$ for $1\leq m_1 <5$. Then combining \eqref{6-9-1}, one has $v(t)\in L^{m_1+1}(\Omega\times(0,T))$ for all $0<T<T_{max}$. Likewise, $p(t)\in
L^{m_2+1}(\Omega\times(0,T))$. This implies that it is feasible to
replace $\varphi$ by $v$ in \eqref{171146}, $\psi$ by $p$ in
\eqref{171147} to obtain
\begin{align}\label{6-10}
N'(t)&=\rho\int_\Omega v_0v_1dx+\mu\int_\Omega p_0p_1dx+\rho\int_0^t\|v_t(s)\|_2^2ds+\mu\int_0^t\|p_{t}(s)\|_2^2ds\notag\\
&\quad-\int_0^t(\alpha_1\|\nabla v(s)\|_2^2+\beta\|\gamma\nabla v-\nabla p\|_2^2)ds-\int_0^t\int_\Omega |v_t(s)|^{m_1-1}v_t(s)v(s)dxds\notag\\
&\quad-\int_0^t\int_\Omega
|p_t(s)|^{m_2-1}p_t(s)p(s)dxds+\int_0^t\|v(s)\|_{n_1+1}^{n_1+1}ds+\int_0^t\|p(s)\|_{n_2+1}^{n_2+1}ds.
\end{align}
Let us show that $N'(t)$ is absolutely continuous, and therefore it
can be differentiated. Recall the fact $v\in
C([0,t];H^1_{\Gamma_0}(\Omega))$ and the embedding
$H^1_{\Gamma_0}(\Omega)\hookrightarrow L^6(\Omega)$, thus for all
$t\in[0,T_{max})$,
\begin{equation}
\label{6-11}
\int_0^t\|v(s)\|_{n_1+1}^{n_1+1}ds+\int_0^t\|p(s)\|_{n_2+1}^{n_2+1}ds<\infty,
\end{equation}
where we have used $1<n_1,n_2<5$ by Remark \ref{rmk-p}. By
H\"older's and Young's inequalities,  we deduce that for all
$t\in[0,T_{max})$,
\begin{align}\label{6-14}
&\int_0^t\Big|\int_\Omega |v_t(s)|^{m_1-1}v_t(s)v(s)dx\Big|ds\le \int_0^t\|v_t(s)\|_{m_1+1}^{m_1}\|v(s)\|_{m_1+1}ds\notag\\
&\quad\le
\frac{m_1}{m_1+1}\int_0^t\|v_t(s)\|_{m_1+1}^{m_1+1}ds+\frac{1}{m_1+1}\int_0^t\|v(s)\|_{m_1+1}^{m_1+1}ds<\infty.
\end{align}
Likewise, $\int_0^t|\int_\Omega
|p_t(s)|^{m_2-1}p_t(s)p(s)dx|ds<\infty$. Therefore, it follows from
\eqref{6-11}, \eqref{6-14} and the regularity of $(v,p)$ that all
terms on the right-hand side of (\ref{6-10}) are absolutely
continuous, and thus we can differentiate (\ref{6-10}) to conclude
\eqref{6-9}.

Second, we aim to find a lower bound for $N''(t)$. By using
H\"{o}lder's and Young's inequalities and $n_1>m_1$, \eqref{6-6} and
\eqref{6-7}, then
\begin{align}\label{261057}
&\int_\Omega |v_t|^{m_1-1}v_tvdx\leq \|v\|_{m_1+1}\|v_t\|^{m_1}_{m_1+1} \leq |\Omega|^{\frac{n_1-m_1}{(n_1+1)(m_1+1)}}\|v\|_{n_1+1}\|v_t\|^{m_1}_{m_1+1}\notag\\
&\quad\le
|\Omega|^{\frac{n_1-m_1}{(n_1+1)(m_1+1)}}(\|v\|_{n_1+1}^{n_1+1}+\|p\|_{n_2+1}^{n_2+1})^{\frac{1}{n_1+1}}
\|v_t\|^{m_1}_{m_1+1}\notag\\
&\quad\le
|\Omega|^{\frac{n_1-m_1}{(n_1+1)(m_1+1)}}(\hat{c}G(t))^{\frac{1}{n_1+1}-\frac{1}{m_1+1}}(\|v\|_{n_1+1}^{n_1+1}+\|p\|_{n_2+1}^{n_2+1})^{\frac{1}{m_1+1}}
\|v_t\|^{m_1}_{m_1+1}\notag\\
&\quad \le \varepsilon_1
(G(t))^{\frac{1}{n_1+1}-\frac{1}{m_1+1}}(\|v\|_{n_1+1}^{n_1+1}+\|p\|_{n_2+1}^{n_2+1})
+C(\varepsilon_1)(G(t))^{\frac{1}{n_1+1}-\frac{1}{m_1+1}}
\|v_t\|^{m_1+1}_{m_1+1}\notag\\
&\quad \le \varepsilon_1 (G(0))^{\frac{1}{n_1+1}-\frac{1}{m_1+1}}(\|v\|_{n_1+1}^{n_1+1}+\|p\|_{n_2+1}^{n_2+1})\notag\\
&\qquad
+C(\varepsilon_1)(G(0))^{\frac{1}{n_1+1}-\frac{1}{m_1+1}+\varpi}(G(t))^{-\varpi}
\|v_t\|^{m_1+1}_{m_1+1}\notag\\
&\quad \le \varepsilon_1 (G(0))^{\frac{1}{n_1+1}-\frac{1}{m_1+1}}(\|v\|_{n_1+1}^{n_1+1}+\|p\|_{n_2+1}^{n_2+1})\notag\\
&\qquad+C(\varepsilon_1)(G(0))^{\frac{1}{n_1+1}-\frac{1}{m_1+1}+\varpi}(G(t))^{-\varpi}
G'(t).
\end{align}
Likewise, we get
\begin{align}\label{261058}
\int_\Omega |p_t|^{m_2-1}p_tpdx& \le \varepsilon_2 (G(0))^{\frac{1}{n_2+1}-\frac{1}{m_2+1}}(\|v\|_{n_1+1}^{n_1+1}+\|p\|_{n_2+1}^{n_2+1})\notag\\
&\quad
+C(\varepsilon_2)(G(0))^{\frac{1}{n_2+1}-\frac{1}{m_2+1}+\varpi}(G(t))^{-\varpi}
G'(t).
\end{align}
Inserting \eqref{261057} and \eqref{261058} into \eqref{6-9}, one
arrives at
\begin{align}\label{e3}
N''(t)&\ge\rho\|v_t\|_2^2+\mu\|p_{t}\|_2^2-(\alpha_1\|\nabla v\|_2^2+\beta\|\gamma\nabla v-\nabla p\|_2^2)\notag\\
&\quad-\Big[C(\varepsilon_1)(G(0))^{\frac{1}{n_1+1}-\frac{1}{m_1+1}+\varpi}+C(\varepsilon_2)(G(0))^{\frac{1}{n_2+1}-\frac{1}{m_2+1}+\varpi}\Big](G(t))^{-\varpi}
G'(t)\notag\\
&\quad+\Big[1-\varepsilon_1
(G(0))^{\frac{1}{n_1+1}-\frac{1}{m_1+1}}-\varepsilon_2
(G(0))^{\frac{1}{n_2+1}-\frac{1}{m_2+1}}\Big](\|v\|_{n_1+1}^{n_1+1}+\|p\|_{n_2+1}^{n_2+1}),
\end{align}
for all $t\in [0,T_{max})$. It follows from \eqref{6-7} that
\begin{equation}\label{261123}
\begin{split}
 2G(t)-\frac{2}{\hat{c}}(\|v\|_{n_1+1}^{n_1+1}+\|p\|_{n_2+1}^{n_2+1})+\rho\|v_t\|_2^2+\mu\|p_{t}\|_2^2\le  -(\alpha_1\|\nabla v\|_2^2+\beta\|\gamma\nabla v-\nabla p\|_2^2).
\end{split}
\end{equation}
Inserting \eqref{261123} into \eqref{e3}, one has
\begin{align}\label{261124}
&N''(t)\ge2\rho\|v_t\|_2^2+2\mu\|p_{t}\|_2^2+2G(t)\notag\\
&\quad-\Big[C(\varepsilon_1)(G(0))^{\frac{1}{n_1+1}-\frac{1}{m_1+1}+\varpi}+C(\varepsilon_2)(G(0))^{\frac{1}{n_2+1}-\frac{1}{m_2+1}+\varpi}\Big](G(t))^{-\varpi}
G'(t)\notag\\
&\quad+\Big[1-\varepsilon_1
(G(0))^{\frac{1}{n_1+1}-\frac{1}{m_1+1}}-\varepsilon_2
(G(0))^{\frac{1}{n_2+1}-\frac{1}{m_2+1}}-\frac{2}{\hat{c}}\Big](\|v\|_{n_1+1}^{n_1+1}+\|p\|_{n_2+1}^{n_2+1}),
\end{align}
for all $t\in [0,T_{max})$.  Let us choose
$\varepsilon_1=\frac{\hat{c}-2}{4\hat{c}}(G(0))^{\frac{1}{m_1+1}-\frac{1}{n_1+1}}$
and $\varepsilon_2=
\frac{\hat{c}-2}{4\hat{c}}(G(0))^{\frac{1}{m_2+1}-\frac{1}{n_2+1}}$,
then
\[1-\varepsilon_1 (G(0))^{\frac{1}{n_1+1}-\frac{1}{m_1+1}}-\varepsilon_2 (G(0))^{\frac{1}{n_2+1}-\frac{1}{m_2+1}}-\frac{2}{\hat{c}}=\frac{\hat{c}-2}{2\hat{c}}> 0.\]

Let us combine \eqref{6-8} with \eqref{261124}, then
\begin{align}\label{6-20}
&Y'(t) \geq 2\varepsilon\rho\|v_t\|_2^2+2\varepsilon\mu\|p_{t}\|_2^2+2\varepsilon G(t)+\frac{\hat{c}-2}{2\hat{c}}\varepsilon(\|v\|_{n_1+1}^{n_1+1}+\|p\|_{n_2+1}^{n_2+1})\notag\\
&\quad+\left\{(1-\varpi)-\varepsilon\Big[C(\varepsilon_1)(G(0))^{\frac{1}{n_1+1}-\frac{1}{m_1+1}+\varpi}+C(\varepsilon_2)(G(0))^{\frac{1}{n_2+1}-\frac{1}{m_2+1}+\varpi}\Big]\right\}G'(t)G^{-\varpi}(t).
\end{align}
Recalling $0<\varpi<\frac{1}{2}$ by \eqref{6-4}, we choose
$0<\varepsilon<1$
 small enough such that
$$
C=(1-\varpi)-\varepsilon\Big[C(\varepsilon_1)(G(0))^{\frac{1}{n_1+1}-\frac{1}{m_1+1}+\varpi}+C(\varepsilon_2)(G(0))^{\frac{1}{n_2+1}-\frac{1}{m_2+1}+\varpi}\Big]\geq0,
$$
 to obtain from \eqref{6-20}  that
\begin{align}\label{6-22}
Y'(t)&\geq 2\varepsilon\rho\|v_t\|_2^2+2\varepsilon\mu\|p_{t}\|_2^2+2\varepsilon G(t)+CG'(t)G^{-\varpi}(t)\notag\\
&\quad+\frac{\hat{c}-2}{2\hat{c}}\varepsilon(\|v\|_{n_1+1}^{n_1+1}+\|p\|_{n_2+1}^{n_2+1})
>0,
\end{align}
which indicates that $Y(t)$ is increasing on $[0,T_{max})$, with
$$
Y(t)=G^{1-\varpi}(t)+\varepsilon N'(t) > Y(0) =
G^{1-\varpi}(0)+\varepsilon N'(0).
$$
If $N'(0)\geq0$, then we do not need any  further condition on
$\varepsilon$. However,  if $N'(0)<0$, we need to take $\varepsilon$
such that $0<\varepsilon\leq-\frac{G^{1-\varpi}(0)}{2N'(0)}$. In
either case, we obtain
\begin{align}\label{6-23}
Y(t)\geq \frac{1}{2}G^{1-\varpi}(0)>0,\ \ \mbox{for}\
t\in[0,T_{max}).
\end{align}

Finally, let us verify that
\begin{align}\label{6-24}
Y'(t)\geq C\varepsilon^{1+\sigma}Y^{\frac{1}{1-\varpi}}(t),\ \
\mbox{for}\ t\in[0,T_{max}),
\end{align}
where $C>0$ is a generic constant independent of $\varepsilon$, and
$\sigma=\max\{\sigma_1,\sigma_2\}>0$ with
\begin{equation}
\label{261447} \sigma_1=1-\frac{2}{(1-2\varpi)(n_1+1)}>0,\quad
\sigma_2=1-\frac{2}{(1-2\varpi)(n_2+1)}>0,
\end{equation}
due to (\ref{6-4}). Indeed, if $N'(t)\leq 0$ for some
$t\in[0,T_{max})$, then
\begin{align}\label{6-25}
Y^{\frac{1}{1-\varpi}}(t)=[G^{1-\varpi}(t)+\varepsilon
N'(t)]^{\frac{1}{1-\varpi}}\le G(t).
\end{align}
Therefore, \eqref{6-22} and \eqref{6-25} give
$$
Y'(t)\geq 2\varepsilon G(t)\geq 2\varepsilon^{1+\sigma}G(t)\geq
2\varepsilon^{1+\sigma}Y^{\frac{1}{1-\varpi}}(t).
$$
If $N'(t)> 0$ for some $t\in[0,T_{max})$, note that
$Y(t)=G^{1-\varpi}(t)+\varepsilon N'(t)\leq G^{1-\varpi}(t)+N'(t)$,
then
\begin{align}\label{6-26}
Y^{\frac{1}{1-\varpi}}(t)\leq C
\Big[G(t)+[N'(t)]^{\frac{1}{1-\varpi}}\Big].
\end{align}
Applying H\"{o}lder's and Young's inequality, and using
$1<\frac{1}{1-\varpi}<2$, we conclude from \eqref{261438} that
\begin{align}\label{6-27}
[N'(t)]^{\frac{1}{1-\varpi}}&\leq \Big(\rho\|v_t\|_2\|v\|_2+\mu\|p_t\|_2\|p\|_2\Big)^{\frac{1}{1-\varpi}}\notag\\
&\leq C\Big(\|v_t\|^{\frac{1}{1-\varpi}}_2\|v\|^{\frac{1}{1-\varpi}}_2+\|p_t\|^{\frac{1}{1-\varpi}}_2\|p\|^{\frac{1}{1-\varpi}}_2\Big)\notag\\
&\leq
C\Big(\|v_t\|^2_2+\|v\|^{\frac{2}{1-2\varpi}}_{n_1+1}+\|p_t\|^2_2+\|p\|^{\frac{2}{1-2\varpi}}_{n_2+1}\Big).
\end{align}
Recalling $\varepsilon\leq G(0)$, then from \eqref{261447},
\eqref{6-7}, \eqref{6-4} and \eqref{261447}, one has
\begin{align}\label{6-29}
\|v\|^{\frac{2}{1-2\varpi}}_{n_1+1}&\le(\|v\|^{n_1+1}_{n_1+1}+\|p\|^{n_2+1}_{n_2+1})^{\frac{2}{(1-2\varpi)(n_1+1)}}\notag\\
&= (\|v\|_{n_1+1}^{n_1+1}+\|p\|^{n_2+1}_{n_2+1})^{\frac{2}{(1-2\varpi)(n_1+1)}-1}(\|v\|_{n_1+1}^{n_1+1}+\|p\|^{n_2+1}_{n_2+1})\notag\\
&\leq
CG(t)^{\frac{2}{(1-2\varpi)(n_1+1)}-1}(\|v\|_{n_1+1}^{n_1+1}+\|p\|^{n_2+1}_{n_2+1})
\notag\\&\leq
CG^{-\sigma_1}(0)(\|v\|_{n_1+1}^{n_1+1}+\|p\|^{n_2+1}_{n_2+1})\notag\\
&\leq
C\varepsilon^{-\sigma_1}(\|v\|_{n_1+1}^{n_1+1}+\|p\|^{n_2+1}_{n_2+1}).
\end{align}
In the same way, we have
\begin{align}\label{6-30}
\|p\|^{\frac{2}{1-2\varpi}}_{n_2+1} \leq
C\varepsilon^{-\sigma_2}(\|v\|_{n_1+1}^{n_1+1}+\|p\|^{n_2+1}_{n_2+1}).
\end{align}
Recall $\sigma=\max\{\sigma_1,\sigma_2\}>0$ and
$\varepsilon^{-\sigma}>1$. By substituting \eqref{6-29} and
\eqref{6-30} into \eqref{6-27}, we get
\begin{align}\label{6-31}
[N'(t)]^{\frac{1}{1-\varpi}}&\leq C\Big(\|v_t\|^2_2+\|p_t\|^2_2+\varepsilon^{-\sigma}(\|v\|_{n_1+1}^{n_1+1}+\|p\|^{n_2+1}_{n_2+1})\Big)\notag\\
&\leq
C\varepsilon^{-\sigma}\Big(\|v_t\|^2_2+\|p_t\|^2_2+\|v\|_{n_1+1}^{n_1+1}+\|p\|^{n_2+1}_{n_2+1}\Big).
\end{align}
Combining  \eqref{6-26}, \eqref{6-31} and \eqref{6-22}, then
\begin{equation*}
\begin{split}
Y^{\frac{1}{1-\varpi}}(t)&\leq C \Big[G(t)+[N'(t)]^{\frac{1}{1-\varpi}}\Big]\\
&\le C\varepsilon^{-\sigma}\Big[G(t)+\|v_t\|^2_2+\|p_t\|^2_2+\|v\|_{n_1+1}^{n_1+1}+\|p\|^{n_2+1}_{n_2+1}\Big]\\
&\le C\varepsilon^{-1-\sigma}Y'(t),
\end{split}
\end{equation*}
for all $t\in[0,T_{max})$ if $N'(t)>0$. Then in either case,
\eqref{6-24} holds true.

It follows  from \eqref{6-23} and \eqref{6-24} that the maximum life
span $T_{max}$ is definitely finite with
\begin{align*}
T_{max}<C\varepsilon^{-(1+\sigma)}Y^{-\frac{\varpi}{1-\varpi}}(0)\leq
C\varepsilon^{-(1+\sigma)}G^{-\varpi}(0),
\end{align*}
which implies the quadratic energy must go to infinity, that is,
\begin{align}\label{6-33}
\limsup_{t\rightarrow T_{max}^-} E(t) = +\infty.
\end{align}
It follows from (\ref{6-7}) that
\begin{align}  \label{6-35}
E(t) &= - G(t) +\frac{1}{n_1+1}\|v\|_{n_1+1}^{n_1+1}+\frac{1}{n_2+1}\|p\|_{n_2+1}^{n_2+1}\notag\\
& <
\frac{1}{n_1+1}\|v\|_{n_1+1}^{n_1+1}+\frac{1}{n_2+1}\|p\|_{n_2+1}^{n_2+1},
\end{align}
due to $G(t)>0$ on $[0,T_{max})$. Therefore,  (\ref{6-33}) and
(\ref{6-35}) illustrate
\begin{align} \label{6-36}
\limsup_{t\rightarrow T_{max}^-}
(\|v(t)\|_{n_1+1}^{n_1+1}+\|p\|_{n_2+1}^{n_2+1})  = + \infty.
\end{align}
Recall $n_1,~n_2<5$, embedding theorem \eqref{12171637} and
\eqref{12161346}, then \begin{equation*}
\begin{split}
&\|v\|_{n_1+1}^{n_1+1}+\|p\|_{n_2+1}^{n_2+1}
\le B_1\|\nabla v\|_2^{n_1+1}+B_2\|\nabla p\|_2^{n_2+1}\\
&\quad\le \hat{C}\Big[(\alpha_1\|\nabla v\|_2^2+\beta\|\gamma\nabla
v-\nabla p\|_2^2)^{\frac{n_1+1}{2}}+(\alpha_1\|\nabla
v\|_2^2+\beta\|\gamma\nabla v-\nabla
p\|_2^2)^{\frac{n_2+1}{2}}\Big].
\end{split}
\end{equation*}
This combines with (\ref{6-36}), one obtains
\begin{align*}
\limsup_{t\rightarrow T_{max}^-} (\alpha_1\|\nabla
v\|_2^2+\beta\|\gamma\nabla v-\nabla p\|_2^2) = +\infty.
\end{align*}
The proof is completed.

\subsection{Blow-up in finite time for bounded positive initial energy}
Let us still define the life span $T_{max}$ of the solution $(v,p)$
to be the supremum of all $T>0$ such that $(v,p)$ is a solution to
problem \eqref{1.1} in the sense of Definition \ref{def1} on $[0,
T]$. In the following, we will show that $T_{max}$ is finite and
obtain an upper bound for the life span of solutions.

By using \eqref{231213} and embedding theorem \eqref{12171637} and
\eqref{12161346}, we have that for $t\in [0,T_{max}),$
\begin{align}\label{b8}
&\mathcal{E}(t)=E(t)-\frac{1}{n_1+1}\|v\|_{n_1+1}^{n_1+1}-\frac{1}{n_2+1}\|p\|_{n_2+1}^{n_2+1}\notag\\
&\geq E(t)-\frac{\hat{C}}{n_1+1}\left(\alpha_1\|\nabla v\|_2^2+\beta\|\gamma\nabla v-\nabla p\|_2^2\right)^{\frac{n_1+1}{2}}\notag\\
&\quad-\frac{\hat{C}}{n_2+1}\left(\alpha_1\|\nabla v\|_2^2+\beta\|\gamma\nabla v-\nabla p\|_2^2\right)^{\frac{n_2+1}{2}}\notag\\
&\geq E(t)-\frac{\hat{C}}{n_1+1}(2E(t))^{\frac{n_1+1}{2}}-\frac{\hat{C}}{n_2+1}(2E(t))^{\frac{n_2+1}{2}},\notag\\
&= \Psi(E(t)).
\end{align}
Here we define the function $\Psi:\mathbb{R}^+\to\mathbb{R}$ by
\begin{align*}
\Psi(y):=y-\frac{\hat{C}}{n_1+1}(2y)^{\frac{n_1+1}{2}}-\frac{\hat{C}}{n_2+1}(2y)^{\frac{n_2+1}{2}}.
\end{align*}
It is clear to see that $\Psi(y)$ is continuously differentiable,
concave and has its maximum at $y=y_0>0$, where $y_0$ satisfies
\begin{align}\label{b2}
\hat{C}(2y_0)^{\frac{n_1-1}{2}}+\hat{C}(2y_0)^{\frac{n_2-1}{2}}=1.
\end{align}
Define
\begin{align}\label{b3}
\hat{d}:=\sup_{[0,\infty)}\Psi(y)=\Psi(y_0)=y_0-\frac{\hat{C}}{n_1+1}(2y_0)^{\frac{n_1+1}{2}}-\frac{\hat{C}}{n_2+1}(2y_0)^{\frac{n_2+1}{2}}.
\end{align}

Let us stop here to give the following claim.

\textbf{Claim.} $ 0<\hat{d}\leq d.$

Let us prove this claim. By using (\ref{b2}) and (\ref{b3}), we have
\begin{align*}
\hat{d}&=y_0-\frac{\hat{C}}{n_1+1}(2y_0)^{\frac{n_1+1}{2}}-\frac{\hat{C}}{n_2+1}(2y_0)^{\frac{n_2+1}{2}}\\
&\geq y_0-\frac{2}{\hat{c}}y_0\left[\hat{C}(2y_0)^{\frac{n_1-1}{2}}+\hat{C}(2y_0)^{\frac{n_2-1}{2}}\right]\\
&=y_0-\frac{2}{\hat{c}}y_0=\frac{\hat{c}-2}{\hat{c}}y_0,
\end{align*}
which implies $\hat{d}>0$.  Recall \eqref{271321}, \eqref{271318}
and the maximum value piont $s^*$ of $\Lambda(s)$, then
\[\Lambda(s^*):=\frac{1}{2}s^*-\frac{\hat{C}}{n_1+1}(s^*)^{\frac{n_1+1}{2}}-\frac{\hat{C}}{n_2+1}(s^*)^{\frac{n_2+1}{2}},\]
\[\Lambda'(s^*)=\frac12-\frac{\hat{C}}{2}(s^*)^{\frac{n_1-1}{2}}-\frac{\hat{C}}{2}(s^*)^{\frac{n_2-1}{2}}=0.\]
Let us choose $s^*=2y_0$, then
\begin{align} \label{b6}
\Lambda(s^*)=\Lambda(2y_0)=
y_0-\frac{\hat{C}}{n_1+1}(2y_0)^{\frac{n_1+1}{2}}-\frac{\hat{C}}{n_2+1}(2y_0)^{\frac{n_2+1}{2}}=\hat{d}.
\end{align}
From \eqref{171709}, we obtain
$$
\mathcal{J}(\lambda (v,p))\geq \Lambda(\lambda(\alpha_1\|\nabla
v\|_2^2+\beta\|\gamma\nabla v-\nabla p\|_2^2)),\ \mbox{for}\
\mbox{all}\ \lambda\geq0.
$$
It follows that $ \sup_{\lambda\geq0}\mathcal{J}(\lambda (u,w))\geq
\Lambda(s^*).$ Then we infer from \eqref{171711} and \eqref{b6} that
$$
d=\inf_{(v,p)\in
\mathcal{V}\backslash(0,0)}\sup_{\lambda\geq0}\mathcal{J}(\lambda
(v,p))\geq \Lambda(s^*)=\hat{d}.
$$
This shows that $\hat{d}$ is not larger than the depth $d$ of the
potential well.

Now, let us go back to our proof.  Since $\Psi(y)$ is decreasing if
$y>y_0$, for  $0\leq\mathcal{E}(0)<\hat{d}=\Psi(y_0)$,  there exists
a unique constant $y_1$ satisfying $y_1>y_0>0$ such that
\begin{align*}
\Psi(y_1)=\mathcal{E}(0).
\end{align*}
Then it follows from \eqref{b8} that
\begin{align}\label{b11}
\hat{d}=\Psi(y_0)>\Psi(y_1)=\mathcal{E}(0)\geq \mathcal{E}(t)\geq
\Psi(E(t)),\quad \text{for }t\in [0,T_{max}).
\end{align}
Note that $\Psi(y)$ is continuous and decreasing if $y>y_0$, and
$\mathcal{E}(t)$ is also continuous. Since we assume $E(0)>y_0$, it
follows from \eqref{b11} that
\begin{align}\label{b12}
E(t)\geq y_1>y_0,\quad \text{for }t\in [0,T_{max}).
\end{align}

Let us define
\begin{align}    \label{defGG}
\mathcal G(t) := \mathcal{M}-\mathcal{E}(t).
\end{align}
It follows from $\mathcal{M}=\frac{\hat{c}-2}{2(2+\hat{c})}y_0<y_0$
and \eqref{b12} that
\begin{align}
\label{271149}
\mathcal G(t) &:= \mathcal{M}-\mathcal{E}(t)\notag\\
&<\frac{\hat{c}-2}{2(2+\hat{c})}y_0+\frac{1}{n_1+1}\|v\|_{n_1+1}^{n_1+1}+\frac{1}{n_2+1}\|p\|_{n_2+1}^{n_2+1}-y_0\notag\\
&\le \frac{1}{\hat{c}}(\|v\|_{n_1+1}^{n_1+1}+\|p\|_{n_2+1}^{n_2+1}).
\end{align}
Due to $\mathcal E'(t)\leq 0$, thus $\mathcal G'(t)\geq 0$, i.e.,
$\mathcal G(t)$ is non-decreasing in time. Recall $\mathcal
E(0)<\mathcal{M}$, then $\mathcal G(0) :=
\mathcal{M}-\mathcal{E}(0)>0$. Therefore, we have
\begin{align} \label{GG0}
\mathcal G(t) \geq  \mathcal G(0) >0,  \quad \text{for }t\in
[0,T_{max}).
\end{align}

Let us consider the function
\begin{align*}
Y(t):=\mathcal G^{1-\varpi}(t)+\varepsilon N'(t),
\end{align*}
for some $\varpi\in (0,\frac{1}{2})$ satisfying (\ref{6-4}) and
$\varepsilon>0$. Our aim is to show that $Y(t)$ approaches infinity
in finite time, by choosing $\varepsilon$ sufficiently small.
Adopting the same arguments as \eqref{6-8}, let us estimate
\begin{align}\label{b15}
Y'(t)=(1-\varpi)\mathcal G^{-\varpi}(t)\mathcal G'(t)+\varepsilon
N''(t).
\end{align}

Due to \eqref{271149}, similar to \eqref{261057} and \eqref{261058},
we
\begin{align}\label{261618}
\int_\Omega |v_t|^{m_1-1}v_tvdx&\le \varepsilon_1
(\mathcal{G}(0))^{\frac{1}{n_1+1}-\frac{1}{m_1+1}}(\|v\|_{n_1+1}^{n_1+1}+\|p\|_{n_2+1}^{n_2+1})
\notag\\
&\quad+C(\varepsilon_1)(\mathcal{G}(0))^{\frac{1}{n_1+1}-\frac{1}{m_1+1}+\varpi}(\mathcal{G}(t))^{-\varpi}
\mathcal{G}'(t),
\end{align}
and
\begin{align}\label{261620}
\int_\Omega |p_t|^{m_2-1}p_tpdx &\le \varepsilon_2
(\mathcal{G}(0))^{\frac{1}{n_2+1}-\frac{1}{m_2+1}}(\|v\|_{n_1+1}^{n_1+1}+\|p\|_{n_2+1}^{n_2+1})
\notag\\
&\quad+C(\varepsilon_2)(\mathcal{G}(0))^{\frac{1}{n_2+1}-\frac{1}{m_2+1}+\varpi}(\mathcal{G}(t))^{-\varpi}
\mathcal{G}'(t).
\end{align}
Therefore, inserting \eqref{261618} and \eqref{261620} into
\eqref{6-9}, we obtain
\begin{align}\label{261630}
N''(t)&\ge\rho\|v_t\|_2^2+\mu\|p_{t}\|_2^2-(\alpha_1\|\nabla v\|_2^2+\beta\|\gamma\nabla v-\nabla p\|_2^2)\notag\\
&\quad-\Big[C(\varepsilon_1)(\mathcal{G}(0))^{\frac{1}{n_1+1}-\frac{1}{m_1+1}+\varpi}+C(\varepsilon_2)(\mathcal{G}(0))^{\frac{1}{n_2+1}-\frac{1}{m_2+1}+\varpi}\Big](\mathcal{G}(t))^{-\varpi}
\mathcal{G}'(t)\notag\\
&\quad+\Big[1-\varepsilon_1
(\mathcal{G}(0))^{\frac{1}{n_1+1}-\frac{1}{m_1+1}}-\varepsilon_2
(\mathcal{G}(0))^{\frac{1}{n_2+1}-\frac{1}{m_2+1}}\Big](\|v\|_{n_1+1}^{n_1+1}+\|p\|_{n_2+1}^{n_2+1}).
\end{align}

It follows from  \eqref{231213} and \eqref{defGG} that
\begin{align}\label{262632}
 &2(\mathcal{G}(t)-\mathcal{M})-\frac{2}{\hat{c}}(\|v\|_{n_1+1}^{n_1+1}+\|p\|_{n_2+1}^{n_2+1})+\rho\|v_t\|_2^2+\mu\|p_{t}\|_2^2\notag\\
 &\quad\le  -(\alpha_1\|\nabla v(t)\|_2^2+\beta\|\gamma\nabla v-\nabla p\|_2^2).
\end{align}
Inserting \eqref{262632} into \eqref{261630}, we get
\begin{align}\label{261635}
N''(t)&\ge2\rho\|v_t\|_2^2+2\mu\|p_{t}\|_2^2+2\mathcal{G}(t)-2\mathcal{M}+\frac{\hat{c}-2}{\hat{c}}(\|v\|_{n_1+1}^{n_1+1}+\|p\|_{n_2+1}^{n_2+1})\notag\\
&\quad-\Big[C(\varepsilon_1)(\mathcal{G}(0))^{\frac{1}{n_1+1}-\frac{1}{m_1+1}+\varpi}+C(\varepsilon_2)(\mathcal{G}(0))^{\frac{1}{n_2+1}-\frac{1}{m_2+1}+\varpi}\Big](\mathcal{G}(t))^{-\varpi}
\mathcal{G}'(t)\notag\\
&\quad-\Big[\varepsilon_1 (\mathcal{G}(0))^{\frac{1}{n_1+1}+\frac{1}{m_1+1}}+\varepsilon_2 (\mathcal{G}(0))^{\frac{1}{n_2+1}-\frac{1}{m_2+1}}\Big](\|v\|_{n_1+1}^{n_1+1}+\|p\|_{n_2+1}^{n_2+1})\notag\\
&\ge2\rho\|v_t\|_2^2+2\mu\|p_{t}\|_2^2+2\mathcal{G}(t)-2\mathcal{M}+\frac{\hat{c}-2}{2}(E(t)+\mathcal{G}(t)-\mathcal{M})\notag\\
&\quad-\Big[C(\varepsilon_1)(\mathcal{G}(0))^{\frac{1}{n_1+1}-\frac{1}{m_1+1}+\varpi}+C(\varepsilon_2)(\mathcal{G}(0))^{\frac{1}{n_2+1}-\frac{1}{m_2+1}+\varpi}\Big](\mathcal{G}(t))^{-\varpi}
\mathcal{G}'(t)\notag\\
&\quad+\Big[\frac{\hat{c}-2}{2\hat{c}}-\varepsilon_1
(\mathcal{G}(0))^{\frac{1}{n_1+1}+\frac{1}{m_1+1}}-\varepsilon_2
(\mathcal{G}(0))^{\frac{1}{n_2+1}-\frac{1}{m_2+1}}\Big](\|v\|_{n_1+1}^{n_1+1}+\|p\|_{n_2+1}^{n_2+1})
\notag\\
&\ge2\rho\|v_t\|_2^2+2\mu\|p_{t}\|_2^2+\Big(2+\frac{\hat{c}-2}{2}\Big)\mathcal{G}(t)+\frac{\hat{c}-2}{4}E(t)\notag\\
&\quad-\Big[C(\varepsilon_1)(\mathcal{G}(0))^{\frac{1}{n_1+1}-\frac{1}{m_1+1}+\varpi}+C(\varepsilon_2)(\mathcal{G}(0))^{\frac{1}{n_2+1}-\frac{1}{m_2+1}+\varpi}\Big](\mathcal{G}(t))^{-\varpi}
\mathcal{G}'(t)\notag\\
&\quad+\Big[\frac{\hat{c}-2}{2\hat{c}}-\varepsilon_1
(\mathcal{G}(0))^{\frac{1}{n_1+1}+\frac{1}{m_1+1}}-\varepsilon_2
(\mathcal{G}(0))^{\frac{1}{n_2+1}-\frac{1}{m_2+1}}\Big](\|v\|_{n_1+1}^{n_1+1}+\|p\|_{n_2+1}^{n_2+1}),
\end{align}
where we have used
\[\frac{\hat{c}-2}{4}E(t)>\frac{\hat{c}-2}{4}y_0=\Big(2+\frac{\hat{c}-2}{2}\Big)\mathcal{M},\]
and
\begin{align*}
\frac{1}{\hat{c}}(\|v\|_{n_1+1}^{n_1+1}+\|p\|_{n_2+1}^{n_2+1})&\ge \frac{1}{n_1+1}\|v\|_{n_1+1}^{n_1+1}+\frac{1}{n_2+1}\|p\|_{n_2+1}^{n_2+1}\notag\\
&=E(t)-\mathcal{E}(t)=E(t)+\mathcal{G}(t)-\mathcal{M}
\end{align*}
from \eqref{231213}. Combining \eqref{b15} with \eqref{261635}, one
has
\begin{align}\label{261652}
&Y'(t)\ge \varepsilon 2\rho\|v_t\|_2^2+\varepsilon2\mu\|p_{t}\|_2^2+\varepsilon\Big(2+\frac{\hat{c}-2}{2}\Big)\mathcal{G}(t)+\varepsilon\frac{\hat{c}-2}{4}E(t)\notag\\
&\quad+\Big\{1-\varpi-\varepsilon\Big[C(\varepsilon_1)(\mathcal{G}(0))^{\frac{1}{n_1+1}-\frac{1}{m_1+1}+\varpi}+C(\varepsilon_2)(\mathcal{G}(0))^{\frac{1}{n_2+1}-\frac{1}{m_2+1}+\varpi}\Big]\Big\}(\mathcal{G}(t))^{-\varpi}
\mathcal{G}'(t)\notag\\
&\quad+\varepsilon\Big[\frac{\hat{c}-2}{2\hat{c}}-\varepsilon_1
(\mathcal{G}(0))^{\frac{1}{n_1+1}+\frac{1}{m_1+1}}-\varepsilon_2
(\mathcal{G}(0))^{\frac{1}{n_2+1}-\frac{1}{m_2+1}}\Big](\|v\|_{n_1+1}^{n_1+1}+\|p\|_{n_2+1}^{n_2+1}).
\end{align}

At this point, we select $\varepsilon_1,\varepsilon_2>0$ such that
$$
\frac{\hat{c}-2}{2\hat{c}}-\varepsilon_1
(\mathcal{G}(0))^{\frac{1}{n_1+1}-\frac{1}{m_1+1}}-\varepsilon_2
(\mathcal{G}(0))^{\frac{1}{n_2+1}-\frac{1}{m_2+1}}\geq
\frac{\hat{c}-2}{4\hat{c}}.
$$
For these fixed values of $\varepsilon_1,\varepsilon_2>0$, we choose
$\varepsilon>0$ sufficiently small that
$$
1-\varpi-\varepsilon\Big[C(\varepsilon_1)(\mathcal{G}(0))^{\frac{1}{n_1+1}-\frac{1}{m_1+1}+\varpi}+C(\varepsilon_2)(\mathcal{G}(0))^{\frac{1}{n_2+1}-\frac{1}{m_2+1}+\varpi}\Big]\geq
\frac{1-\varpi}{2}.
$$
Then, from \eqref{261652} and \eqref{271149} we obtain
\begin{align}\label{b31}
Y'(t)&\geq  \varepsilon 2\rho\|v_t\|_2^2+\varepsilon2\mu\|p_{t}\|_2^2+\varepsilon\Big(2+\frac{\hat{c}-2}{2}\Big)\mathcal{G}(t)+\varepsilon\frac{\hat{c}-2}{4}E(t)\notag\\
&\quad+\frac{1-\varpi}{2}(\mathcal{G}(t))^{-\varpi}
\mathcal{G}'(t)+\frac{\hat{c}-2}{4\hat{c}}(\|v(t)\|_{n_1+1}^{n_1+1}+\|p(t)\|_{n_2+1}^{n_2+1})>0,
\end{align}
for all $t\in [0,T_{max})$. Therefore, $Y(t)$ is increasing on
$[0,T_{max})$, with
$$
Y(t)=\mathcal G^{1-\varpi}(t)+\varepsilon N'(t) > Y(0) = \mathcal
G^{1-\varpi}(0)+\varepsilon N'(0).
$$
Similar to \eqref{6-23}, one can choose $\varepsilon$ sufficiently
small such that
\begin{align}\label{b33}
Y(t) \geq \frac{1}{2} \mathcal G^{1-\varpi}(0)>0,\quad \text{for
}t\in [0,T_{max}).
\end{align}
Now, we claim
\begin{align}\label{nODE}
Y'(t)\geq C\varepsilon^{1+\sigma}Y^{\frac{1}{1-\varpi}}(t),\quad
\text{for }t\in [0,T_{max}),
\end{align}
where $\sigma:=\max\{\sigma_1,\sigma_2\}>0$ is shown in
\eqref{261447}.  By solving differential inequality (\ref{nODE})
with (\ref{b33}), we deduce that the maximum life span $T_{max}$ is
necessarily finite with
\begin{align*}
T_{max}<C\varepsilon^{-(1+\sigma)}Y^{-\frac{\varpi}{1-\varpi}}(0)\leq
C\varepsilon^{-(1+\sigma)}\mathcal G^{-\varpi}(0).
\end{align*}

To prove (\ref{nODE}), we use the following argument. If $N'(t)\leq
0$ for some $t\in[0,T_{max})$, then for such value of $t$, we get
\begin{align}  \label{b34}
Y^{\frac{1}{1-\varpi}}(t)=[\mathcal G^{1-\varpi}(t)+\varepsilon
N'(t)]^{\frac{1}{1-\varpi}}\leq \mathcal G(t).
\end{align}
Thus, we infer from \eqref{b31} and \eqref{b34} that
$$
Y'(t)\geq \Big(2+\frac{\hat{c}-2}{2}\Big)\varepsilon \mathcal
G(t)\geq
\Big(2+\frac{\hat{c}-2}{2}\Big)\varepsilon^{1+\sigma}\mathcal
G(t)\geq
\Big(2+\frac{\hat{c}-2}{2}\Big)\varepsilon^{1+\sigma}Y^{\frac{1}{1-\varpi}}(t),
$$
for any value of $t$ such that $N'(t)\leq 0$.

If $N'(t)> 0$ for some $t\in[0,T_{max})$, then
\begin{align}\label{b35}
Y^{\frac{1}{1-\varpi}}(t)\leq C \Big[\mathcal
G(t)+[N'(t)]^{\frac{1}{1-\varpi}}\Big].
\end{align}

We know that $\|v\|_{n_1+1}^{n_1+1}+\|p\|_{n_2+1}^{n_2+1}> \mathcal
G(t)\geq \mathcal G(0)>0$ by (\ref{271149}) and (\ref{GG0}). Let
$\varepsilon \leq \mathcal G(0)$. Then, following estimates
\eqref{6-27}-\eqref{6-30}, we can derive
\begin{align}  \label{b36}
[N'(t)]^{\frac{1}{1-\varpi}}\leq
C\varepsilon^{-\sigma}\Big(\|v_t\|^2_2+\|p_t\|^2_2+\|v\|_{n_1+1}^{n_1+1}+\|p\|^{n_2+1}_{n_2+1}\Big).
\end{align}
Combining \eqref{b31}, \eqref{b36} and (\ref{b35}), we arrive at
\begin{align*}
Y'(t)&\geq C\varepsilon\Big[\|v_t\|^2_2+\|p_t\|^2_2+\|v\|_{n_1+1}^{n_1+1}+\|p\|^{n_2+1}_{n_2+1}\Big]\\
&\geq C\varepsilon\Big[\mathcal G(t)+\varepsilon^\sigma
[N'(t)]^{\frac{1}{1-\varpi}}\Big] \geq
C\varepsilon^{1+\sigma}\Big[\mathcal G(t)+
[N'(t)]^{\frac{1}{1-\varpi}}\Big]\geq
C\varepsilon^{1+\sigma}Y^{\frac{1}{1-\varpi}}(t),
\end{align*}
for any value of $t$ such that $N'(t)> 0$. As a result, we conclude
that (\ref{nODE}) holds for all values of $t\in [0,T_{max})$.

Finally, we conclude $\limsup_{t \to T_{max}^-} (\alpha_1\|\nabla
v\|_2^2+\beta\|\gamma\nabla v-\nabla p\|_2^2)= +\infty$ by using the
same argument as in Section \ref{sec6.1}. This completes the proof.

In the proof of Theorem $\ref{thmg}(ii)$, we verify the invariance
of $\mathcal{W}_1$ under the dynamics. In fact, using the similar
proof,  we have the same result for $\mathcal{W}_2$ as follows.
\begin{lemma} \label{lem3-2-1}
Let $1<n_1,n_2\leq 5$ and $m_1,m_2>1$ and Assumption $\ref{ass}$
hold. Assume further $(v_0,p_0)\in\mathcal{W}_2$ and
$\mathcal{E}(0)<d$. Then the weak solution $(v,p)$ is in
$\mathcal{W}_2$ for all $t\in [0,T_{max})$, where $[0,T_{max})$ is
the maximal interval of existence.
\end{lemma}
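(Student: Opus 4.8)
The plan is to mirror the proof of the invariance of $\mathcal{W}_1$ carried out in Theorem \ref{thmg}$(ii)$, interchanging the roles of $\mathcal{W}_1$ and $\mathcal{W}_2$, and to argue by contradiction. First I would record the standing facts that may be reused verbatim: by \eqref{231220} the total energy $\mathcal{E}$ is non-increasing on $[0,T_{max})$, so $\mathcal{J}(v(t),p(t))\le\mathcal{E}(t)\le\mathcal{E}(0)<d$ for every $t\in[0,T_{max})$; in particular $(v(t),p(t))\in\mathcal{W}$ for all such $t$. Moreover, exactly as in the proof of Theorem \ref{thmg}$(ii)$ (via the mean value estimate \eqref{301159} together with $v,p\in C([0,T_{max});H_{\Gamma_0}^1(\Omega))$ and $1<n_1,n_2\le 5$), the map $t\mapsto \alpha_1\|\nabla v\|_2^2+\beta\|\gamma\nabla v-\nabla p\|_2^2-\|v\|_{n_1+1}^{n_1+1}-\|p\|_{n_2+1}^{n_2+1}$ is continuous. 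By the very definitions of $\mathcal{W}_1$ and $\mathcal{W}_2$, this quantity is strictly negative precisely on $\mathcal{W}_2$, is nonnegative on $\mathcal{W}_1$, and vanishes on $\mathcal{W}_1$ only at $(0,0)$.

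Next I would suppose, to the contrary, that $(v(t_1),p(t_1))\notin\mathcal{W}_2$ for some $t_1\in(0,T_{max})$. Since $\mathcal{W}=\mathcal{W}_1\cup\mathcal{W}_2$ with $\mathcal{W}_1\cap\mathcal{W}_2=\emptyset$, necessarily $(v(t_1),p(t_1))\in\mathcal{W}_1$, so the continuous functional above is strictly negative at $t=0$ (because $(v_0,p_0)\in\mathcal{W}_2$) and nonnegative at $t=t_1$. Letting $t^*\in(0,t_1]$ be the first time this functional vanishes, it is strictly negative on $[0,t^*)$, hence $(v(t),p(t))\in\mathcal{W}_2$ for $t\in[0,t^*)$, and at $t^*$ we have the identity $\alpha_1\|\nabla v(t^*)\|_2^2+\beta\|\gamma\nabla v(t^*)-\nabla p(t^*)\|_2^2=\|v(t^*)\|_{n_1+1}^{n_1+1}+\|p(t^*)\|_{n_2+1}^{n_2+1}$.

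I would then split into two cases at $t^*$, as in the $\mathcal{W}_1$ argument. If $(v(t^*),p(t^*))\neq(0,0)$, the displayed identity says $(v(t^*),p(t^*))\in\mathcal{N}$, whence $\mathcal{J}(v(t^*),p(t^*))\ge d$ by \eqref{17-6}, contradicting $\mathcal{J}(v(t^*),p(t^*))\le\mathcal{E}(0)<d$. If $(v(t^*),p(t^*))=(0,0)$, I would use that for $t\in[0,t^*)$ the pair lies in $\mathcal{W}_2\setminus\{(0,0)\}$, so the embedding estimate \eqref{171608}--\eqref{171727} forces $\alpha_1\|\nabla v\|_2^2+\beta\|\gamma\nabla v-\nabla p\|_2^2\ge s_0>0$ on $[0,t^*)$, where $s_0$ is the positive root of $s^{\frac{n_1-1}{2}}+s^{\frac{n_2-1}{2}}=\frac{1}{\hat{C}}$ from Lemma \ref{lem4-1}; by continuity of $(v,p)$ with values in $\mathcal{V}$ the same bound persists at $t^*$, contradicting $(v(t^*),p(t^*))=(0,0)$. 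Both cases being impossible, no such $t_1$ exists, and therefore $(v,p)\in\mathcal{W}_2$ for all $t\in[0,T_{max})$.

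The only genuine point of care---and what I expect to be the ``main obstacle,'' though a mild one---is the asymmetry between the two sets: $\mathcal{W}_1$ contains the origin while $\mathcal{W}_2$ does not. This is why the degenerate case $(v(t^*),p(t^*))=(0,0)$ is not automatically excluded and must be ruled out by the quantitative lower bound $s_0$ on the quadratic energy inside $\mathcal{W}_2\setminus\{(0,0)\}$. Everything else---the monotonicity of $\mathcal{E}$, the continuity of the Nehari-type functional, and the existence and regularity of the weak solution on $[0,T_{max})$---is taken directly from the earlier results (Theorem \ref{thm2.2}, Lemma \ref{lem4-1}, and the proof of Theorem \ref{thmg}).
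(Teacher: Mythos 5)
Your proof is correct and follows exactly the route the paper intends: the paper does not write out a proof of this lemma but states that it follows "using the similar proof" to the invariance of $\mathcal{W}_1$ in Theorem \ref{thmg}$(ii)$, which is precisely the contradiction argument via the continuity of the Nehari-type functional, the intermediate value theorem, and the two cases at $t^*$ (the Nehari case giving $\mathcal{J}\ge d$, and the degenerate case excluded by the lower bound $s_0$) that you carry out. Your remark on the asymmetry of the origin between $\mathcal{W}_1$ and $\mathcal{W}_2$ is an accurate identification of the only point requiring care.
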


For initial values coming from the unstable part $\mathcal W_2$ of
the potential well,
  a blow-up result is shown in Corollary \ref{cor1}. Now, let us prove Corollary \ref{cor1}.

\begin{proofcor1}
It suffices to show that if $(v_0,p_0)\in\mathcal{W}_2$, then
$E(0)>y_0$.

Since $(v_0,p_0)\in\mathcal{W}_2$, then by the definition of
$\mathcal{W}_2$ and embedding theorem \eqref{12161346}, we get
\begin{align}
\label{271229}
&\alpha_1\|\nabla v_0\|_2^2+\beta\|\gamma\nabla v_0-\nabla p_0\|_2^2<\|v_0\|_{n_1+1}^{n_1+1}+\|p_0\|_{n_2+1}^{n_2+1}\le B_1\|\nabla v_0\|_{2}^{n_1+1}+B_2\|p_0\|_{2}^{n_2+1}\notag\\
&\quad \le \hat{C}\Big[(\alpha_1\|\nabla
v_0\|_2^2+\beta\|\gamma\nabla v_0-\nabla
p_0\|_2^2)^{\frac{n_1+1}{2}}+(\alpha_1\|\nabla
v_0\|_2^2+\beta\|\gamma\nabla v_0-\nabla
p_0\|_2^2)^{\frac{n_2+1}{2}}\Big].
\end{align}
Let us divide both sides of \eqref{271229} by $\alpha_1\|\nabla
v_0\|_2^2+\beta\|\gamma\nabla v_0-\nabla p_0\|_2^2$ to arrive at
$$
 \hat{C}\Big[(\alpha_1\|\nabla v_0\|_2^2+\beta\|\gamma\nabla v_0-\nabla p_0\|_2^2)^{\frac{n_1-1}{2}}+(\alpha_1\|\nabla v_0\|_2^2+\beta\|\gamma\nabla v_0-\nabla p_0\|_2^2)^{\frac{n_2-1}{2}}\Big]>1.
$$
This together with \eqref{b2} gives
\begin{align*}
& \hat{C}\Big[(\alpha_1\|\nabla v_0\|_2^2+\beta\|\gamma\nabla v_0-\nabla p_0\|_2^2)^{\frac{n_1-1}{2}}+(\alpha_1\|\nabla v_0\|_2^2+\beta\|\gamma\nabla v_0-\nabla p_0\|_2^2)^{\frac{n_2-1}{2}}\Big]\\
&\quad>1=\hat{C}(2y_0)^{\frac{n_1-1}{2}}+\hat{C}(2y_0)^{\frac{n_2-1}{2}}.
\end{align*}
Therefore, we have
\begin{align*}
\alpha_1\|\nabla v_0\|_2^2+\beta\|\gamma\nabla v_0-\nabla
p_0\|_2^2>2y_0,
\end{align*}
which implies that $E(0)>y_0$. Then, using Theorem \ref{thm6-2}, we
obtain the blow-up of weak solutions in finite time.
\end{proofcor1}

\subsection{Blow-up in finite time for arbitrarily positive initial energy}
In this section, we wish to prove the blow-up results for
arbitrarily positive initial energy by the Levine's concavity
method. First, let us give the following lemma.
 \begin{lemma}[\cite{L1973,LG2017}]\label{lem2.5}
Suppose a positive, twice-differentiable function $\psi (t)$
satisfies the inequality
\begin{equation*}
\psi''(t)\psi(t)-(1+\theta)({\psi'}(t))^2\ge 0,
\end{equation*}
where $\theta>0.$ If $\psi(0)>0,~\psi'(0)>0$, then
$\psi(t)\to\infty$ as $t\to t_1\le
t_2=\frac{\psi(0)}{\theta\psi'(0)}$.
\end{lemma}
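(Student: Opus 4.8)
The plan is to run the Levine concavity method (Lemma \ref{lem2.5}) on an auxiliary functional tailored to the linearly damped case $m_1=m_2=1$. I first specialize the formula \eqref{6-9} for $N''$ to $m_1=m_2=1$: since then $g_1(v_t)=v_t$ and $g_2(p_t)=p_t$, the damping contribution is the exact derivative $\int_\Omega v_tv\,dx+\int_\Omega p_tp\,dx=\tfrac12\tfrac{d}{dt}(\|v\|_2^2+\|p\|_2^2)$, so with $N$ from \eqref{271513}, $N'$ from \eqref{261438}, and the abbreviations $Q:=\rho\|v_t\|_2^2+\mu\|p_t\|_2^2$, $D:=\alpha_1\|\nabla v\|_2^2+\beta\|\gamma\nabla v-\nabla p\|_2^2$,
\begin{align*}
N''(t)=Q-D-\tfrac12\tfrac{d}{dt}(\|v\|_2^2+\|p\|_2^2)+\|v\|_{n_1+1}^{n_1+1}+\|p\|_{n_2+1}^{n_2+1}.
\end{align*}
The only obstruction is this indefinite total derivative, which I will cancel by adding a primitive of $\|v\|_2^2+\|p\|_2^2$ to the functional.

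Accordingly I set
\begin{align*}
\psi(t):=\rho\|v\|_2^2+\mu\|p\|_2^2+\int_0^t(\|v\|_2^2+\|p\|_2^2)\,ds-t(\|v_0\|_2^2+\|p_0\|_2^2)+\kappa(t+\tau)^2,
\end{align*}
with constants $\kappa,\tau>0$ to be fixed (these are the $\kappa,\tau$ named in the statement). The second derivative of the primitive term produces $+\tfrac{d}{dt}(\|v\|_2^2+\|p\|_2^2)$, which cancels the damping term in $2N''$, while the affine term contributes nothing and $\kappa(t+\tau)^2$ contributes $2\kappa$, leaving
\begin{align*}
\psi''(t)=2Q-2D+2(\|v\|_{n_1+1}^{n_1+1}+\|p\|_{n_2+1}^{n_2+1})+2\kappa.
\end{align*}
Substituting the pointwise inequality $\|v\|_{n_1+1}^{n_1+1}+\|p\|_{n_2+1}^{n_2+1}\ge\hat c(\tfrac12Q+\tfrac12D-\mathcal E(t))$, which follows from \eqref{231213} and $\hat c=\min\{n_1+1,n_2+1\}$, and using $\mathcal E(t)\le\mathcal E(0)$ from \eqref{231219}, I obtain
\begin{align*}
\psi''(t)\ge(\hat c+2)Q+(\hat c-2)D+2\kappa-2\hat c\,\mathcal E(0).
\end{align*}
Choosing $\kappa\ge\hat c\,\mathcal E(0)$ gives $\psi''\ge(\hat c+2)Q+(\hat c-2)D\ge0$; the positive-energy defect is absorbed by the convexity of $\kappa(t+\tau)^2$.

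The crux is the concavity inequality $\psi\psi''-(1+\theta)(\psi')^2\ge0$ with $\theta=\tfrac{\hat c-2}{4}$. I would regard
\begin{align*}
\psi'(t)=2N'(t)+\Big(\|v\|_2^2+\|p\|_2^2-\|v_0\|_2^2-\|p_0\|_2^2\Big)+2\kappa(t+\tau)
\end{align*}
as a sum of three bilinear pairings — the momentum $N'$ pairing position with velocity, the history difference $\|v\|_2^2+\|p\|_2^2-\|v_0\|_2^2-\|p_0\|_2^2=2\int_0^t(\int_\Omega vv_s+\int_\Omega pp_s)\,dx\,ds$, and the affine term $2\kappa(t+\tau)$ — and apply a combined Cauchy--Schwarz estimate across all three components (position, history, linear growth) at once. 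This dominates $(\psi')^2$ by four times the product of $\psi$ and a velocity sum that is in turn controlled by $\psi''$, the factor $1+\theta=\tfrac{\hat c+2}{4}$ emerging from the coefficient $\hat c+2$ of $Q$ in the lower bound for $\psi''$. This is the main obstacle, since for \emph{positive} initial energy the three-term estimate does not close on its own, and the classical Levine bookkeeping must be carried out exploiting a large initial momentum $N'(0)$ together with a large shift $\tau$. The hypothesis $0<\mathcal E(0)<\tfrac{\bar{C}}{\hat c}(\rho\int_\Omega v_0v_1\,dx+\mu\int_\Omega p_0p_1\,dx)$ is exactly the quantitative balance that keeps the energy subordinate to the momentum and lets the inequality close.

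Finally I check the hypotheses of Lemma \ref{lem2.5}: $\psi(0)=\rho\|v_0\|_2^2+\mu\|p_0\|_2^2+\kappa\tau^2>0$, and $\psi'(0)=2(\rho\int_\Omega v_0v_1\,dx+\mu\int_\Omega p_0p_1\,dx+\kappa\tau)>0$ because the hypothesis together with $\mathcal E(0)>0$ forces $\rho\int_\Omega v_0v_1\,dx+\mu\int_\Omega p_0p_1\,dx>0$; moreover $\psi''\ge0$ keeps $\psi'$ and $\psi$ positive. Lemma \ref{lem2.5} then yields $\psi(t)\to\infty$ as $t\to t_1\le t_2=\psi(0)/(\theta\psi'(0))$. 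Since $t_1$ is finite, the term $\kappa(t+\tau)^2$ stays bounded, so the divergence of $\psi$ must originate from $\rho\|v\|_2^2+\mu\|p\|_2^2$ and its time integral; Poincaré's inequality \eqref{2.1} and \eqref{12161346} then transfer it to $\limsup_{t\to T_{max}^-}D(t)=+\infty$ at a finite $T_{max}$. Inserting $\theta=\tfrac{\hat c-2}{4}$ and the above values of $\psi(0),\psi'(0)$ into $t_2$, and sharpening the estimate with the supplementary hypothesis on $\mathcal E(0)$, gives the stated explicit upper bound, the role of that hypothesis being to keep the denominator $(\hat c-2)[\rho\int_\Omega v_0v_1\,dx+\mu\int_\Omega p_0p_1\,dx+\kappa\tau]-2(\|v_0\|_2^2+\|p_0\|_2^2)$ positive.
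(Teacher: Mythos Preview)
Your proposal does not prove the stated lemma at all. Lemma~\ref{lem2.5} is an abstract ODE result: given any positive $C^2$ function $\psi$ satisfying $\psi\psi''-(1+\theta)(\psi')^2\ge 0$ with $\psi(0),\psi'(0)>0$, one must show $\psi\to\infty$ before time $\psi(0)/(\theta\psi'(0))$. What you have written is instead an outline of the proof of Theorem~\ref{thm2.10} --- you construct a specific auxiliary functional from the PDE solution and \emph{apply} Lemma~\ref{lem2.5} to it. That is the wrong target.

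A correct proof of Lemma~\ref{lem2.5} itself is the standard Levine concavity computation: set $\varphi(t)=\psi(t)^{-\theta}$, compute
\[
\varphi''(t)=-\theta\,\psi(t)^{-\theta-2}\bigl[\psi(t)\psi''(t)-(1+\theta)(\psi'(t))^2\bigr]\le 0,
\]
so $\varphi$ is concave, hence $\varphi(t)\le\varphi(0)+\varphi'(0)t$ with $\varphi(0)=\psi(0)^{-\theta}>0$ and $\varphi'(0)=-\theta\psi(0)^{-\theta-1}\psi'(0)<0$. The right-hand side vanishes at $t_2=-\varphi(0)/\varphi'(0)=\psi(0)/(\theta\psi'(0))$, forcing $\varphi\to 0^+$ (equivalently $\psi\to\infty$) at some $t_1\le t_2$. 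The paper does not include this proof either --- Lemma~\ref{lem2.5} is simply cited from \cite{L1973,LG2017} --- but this is what a proof of the lemma as stated would require.

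As a side remark, even viewed as a sketch of Theorem~\ref{thm2.10}, your argument has a genuine gap at the crux: you concede that ``the three-term estimate does not close on its own'' and defer to unspecified ``classical Levine bookkeeping.'' In the paper's actual proof of Theorem~\ref{thm2.10} this step is handled explicitly: the auxiliary function uses the factor $(T_{max}-t)(\|v_0\|_2^2+\|p_0\|_2^2)$ rather than your $-t(\|v_0\|_2^2+\|p_0\|_2^2)$, the Cauchy--Schwarz cross-term $\xi(t)\ge 0$ is checked term by term, and --- crucially --- the monotonicity $\|v_0\|_2^2+\|p_0\|_2^2\le\|v\|_2^2+\|p\|_2^2$ needed to close $\eta(t)\ge 0$ is supplied by the separate Lemma~\ref{lem2.6}, which is where the hypothesis $\mathcal{E}(0)<\tfrac{\bar C}{\hat c}(\rho\int v_0v_1+\mu\int p_0p_1)$ actually enters.
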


\begin{lemma}\label{lem2.6}
Let Assumption $\ref{ass}(3)$ hold, and $n_1>1,~n_2>1$. If the
initial energy fulfills
\begin{equation}
\label{8165}
0<\mathcal{E}(0)<\frac{\bar{C}}{\hat{c}}\Big(\rho\int_\Omega
v_0v_1dx+\mu\int_\Omega p_0p_1dx\Big),
\end{equation}
then the weak solution $u$ to problem \eqref{1.1} satisfies
\begin{equation}
\label{8167} \rho\int_\Omega vv_tdx+\mu\int_\Omega
pp_tdx-\frac{\hat{c}}{\bar{C}}\mathcal{E}(t)\ge \Big(\rho\int_\Omega
v_0v_1dx+\mu\int_\Omega
p_0p_1dx-\frac{\hat{c}}{\bar{C}}\mathcal{E}(0)\Big)e^{\bar{C}t}>0
\end{equation}
for any $t\in [0,T_{max}),$ where
\begin{align}
\label{8168}
\bar{C}&=\min\Big\{\frac{\hat{c}+2\rho}{\rho},\frac{\hat{c}+2\mu}{\mu},\frac{\hat{c}-2}{2}\Big(\max\Big\{\frac{2\gamma^2+1}{\alpha_1},\frac2\beta\Big\}\Big)^{-1}\frac{1}{c^2}\Big(\frac{\mu}{2}+\frac{1}{4\hat{c}}\Big)^{-1},\notag\\
&\qquad\frac{\hat{c}-2}{2}\Big(\max\Big\{\frac{2\gamma^2+1}{\alpha_1},\frac2\beta\Big\}\Big)^{-1}\frac{1}{c^2}\Big(\frac{\rho}{2}+\frac{1}{4\hat{c}}\Big)^{-1}\Big\}.
\end{align}
\end{lemma}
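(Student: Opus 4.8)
The plan is to build a Lyapunov-type functional whose exponential growth is forced by a linear differential inequality. Set
\[
H(t):=\rho\int_\Omega v v_t\,dx+\mu\int_\Omega p p_t\,dx-\frac{\hat c}{\bar C}\,\mathcal E(t)=N'(t)-\frac{\hat c}{\bar C}\,\mathcal E(t),
\]
with $N'$ as in \eqref{261438}. The hypothesis \eqref{8165} says precisely that $\tfrac{\hat c}{\bar C}\mathcal E(0)<\rho\int_\Omega v_0v_1\,dx+\mu\int_\Omega p_0p_1\,dx$, i.e.\ $H(0)>0$. Consequently the assertion \eqref{8167} is exactly $H(t)\ge H(0)e^{\bar Ct}$, and it would suffice to prove
\[
H'(t)\ge \bar C\,H(t)\qquad\text{for a.e.\ }t\in[0,T_{max}),
\]
since then $e^{-\bar Ct}H(t)$ is nondecreasing. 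Note that, since $m_1=m_2=1$ here, Assumption \ref{ass}(3) forces $n_1,n_2<3$, so $H^1_{\Gamma_0}(\Omega)\hookrightarrow L^6(\Omega)\hookrightarrow L^{n_i+1}(\Omega)$ and $v_0,p_0\in L^2(\Omega)$; thus the derivation of \eqref{6-10}--\eqref{6-9} in Section \ref{sec6.1} applies verbatim, with the damping integrals reduced to $\int_\Omega v_t v\,dx$ and $\int_\Omega p_t p\,dx$, showing that $N'$ is absolutely continuous with $N''$ given by \eqref{6-9}. Combined with \eqref{231219}, which gives $\mathcal E'(t)=-\|v_t\|_2^2-\|p_t\|_2^2$ and the absolute continuity of $\mathcal E$, this makes $H$ absolutely continuous with $H'(t)=N''(t)+\tfrac{\hat c}{\bar C}(\|v_t\|_2^2+\|p_t\|_2^2)$ a.e.

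The next step is to expand $H'(t)-\bar C H(t)$ by inserting \eqref{6-9} for $N''$ and \eqref{231213} for $\mathcal E$ in $-\bar C H$, and to collect terms. Writing $\mathcal Q:=\alpha_1\|\nabla v\|_2^2+\beta\|\gamma\nabla v-\nabla p\|_2^2$, one obtains
\begin{align*}
H'(t)-\bar C H(t)
&=\Big(\rho+\tfrac{\hat c\rho}{2}+\tfrac{\hat c}{\bar C}\Big)\|v_t\|_2^2+\Big(\mu+\tfrac{\hat c\mu}{2}+\tfrac{\hat c}{\bar C}\Big)\|p_t\|_2^2+\tfrac{\hat c-2}{2}\,\mathcal Q\\
&\quad+\Big(1-\tfrac{\hat c}{n_1+1}\Big)\|v\|_{n_1+1}^{n_1+1}+\Big(1-\tfrac{\hat c}{n_2+1}\Big)\|p\|_{n_2+1}^{n_2+1}\\
&\quad-(1+\bar C\rho)\int_\Omega v_t v\,dx-(1+\bar C\mu)\int_\Omega p_t p\,dx.
\end{align*}
Since $\hat c=\min\{n_1+1,n_2+1\}>2$, the two source coefficients are $\ge 0$ and $\tfrac{\hat c-2}{2}>0$, so those terms may be discarded. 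It then remains to absorb the two indefinite products: by Cauchy--Schwarz and Young, $(1+\bar C\rho)\int_\Omega v_t v\,dx\le \tfrac{a_1}{2}\|v_t\|_2^2+\tfrac{(1+\bar C\rho)^2}{2a_1}\|v\|_2^2$, while Poincar\'e \eqref{2.1} and the coercivity bound \eqref{12161346} give $\|v\|_2^2\le c^2\|\nabla v\|_2^2\le c^2\max\{\tfrac{2\gamma^2+1}{\alpha_1},\tfrac2\beta\}\mathcal Q$, and similarly for the $p$-term with a parameter $a_2$. Distributing the kinetic budgets and the $\mathcal Q$-budget between the $v$-part and the $p$-part, each part reduces to a quadratic form in $(\|v_t\|_2,\|v\|_2)$, resp.\ $(\|p_t\|_2,\|p\|_2)$, which is nonnegative precisely when the corresponding discriminant inequality holds; a direct computation shows that these discriminant inequalities are guaranteed by the four quantities appearing in the minimum that defines $\bar C$ in \eqref{8168}. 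This would give $H'(t)\ge\bar C H(t)$ a.e.

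Finally, integrating the differential inequality — equivalently, observing $(e^{-\bar Ct}H(t))'\ge0$ a.e.\ and using that $H$ is absolutely continuous — and invoking $H(0)>0$ yields $H(t)\ge H(0)e^{\bar Ct}>0$ on $[0,T_{max})$, which is exactly \eqref{8167}. The hard part will be the constant bookkeeping in the penultimate step: one must track how the kinetic coefficients $\rho+\tfrac{\hat c\rho}{2}+\tfrac{\hat c}{\bar C}$ and $\mu+\tfrac{\hat c\mu}{2}+\tfrac{\hat c}{\bar C}$, the available coercive quantity $\tfrac{\hat c-2}{2}\mathcal Q$, and the embedding constant $\max\{\tfrac{2\gamma^2+1}{\alpha_1},\tfrac2\beta\}$ interact through the Young parameters $a_1,a_2$ so as to reproduce exactly the four upper bounds for $\bar C$ in \eqref{8168}; the remaining ingredients (differentiation of $H$, nonnegativity of the discarded terms, the Gronwall step) are routine.
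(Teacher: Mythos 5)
Your overall strategy coincides with the paper's: the functional $H(t)=N'(t)-\frac{\hat c}{\bar C}\mathcal E(t)$ is exactly the paper's $\mathcal F(t)$, the identity $H'=N''+\frac{\hat c}{\bar C}(\|v_t\|_2^2+\|p_t\|_2^2)$ together with the rigorous derivation of $N''$ from the weak formulation is as in the paper, and the final Gronwall step is the same. The expansion of $H'-\bar CH$ that you write down is also algebraically correct. The problem is precisely the step you defer: the claim that the discriminant inequalities produced by your single Young inequality on the combined cross term $(1+\bar C\rho)\int_\Omega v_tv\,dx$ are ``guaranteed by the four quantities appearing in the minimum that defines $\bar C$'' is not true. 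After optimizing $a_1$, your $v$-part requires $(1+\bar C\rho)^2\le 4\bigl(\rho+\tfrac{\hat c\rho}{2}+\tfrac{\hat c}{\bar C}\bigr)\cdot\tfrac{\hat c-2}{2c^2M}$ with $M=\max\{\tfrac{2\gamma^2+1}{\alpha_1},\tfrac2\beta\}$, a condition quadratic in $\bar C\rho$, whereas \eqref{8168} only encodes linear bounds on $\bar C$. These are not equivalent: for instance, with $\hat c=3$, $\rho=\mu=10^{-2}$ and $c^2M=2\cdot 10^{-2}$ one gets $\bar C\approx 283$, so the left-hand side is about $14.7$ while the right-hand side is about $3.6$, and the absorption fails.

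The paper closes this step differently, and the difference is exactly what makes \eqref{8168} come out. First, Young's inequality is applied to $\int_\Omega v_tv\,dx$ alone (coefficient $1$, not $1+\bar C\rho$) with the asymmetric split $\tfrac{\hat c}{\bar C}\|v_t\|_2^2+\tfrac{\bar C}{4\hat c}\|v\|_2^2$, so that the kinetic cost is exactly repaid by the $+\tfrac{\hat c}{\bar C}\|v_t\|_2^2$ coming from $-\tfrac{\hat c}{\bar C}\mathcal E'(t)$ and only the small potential cost $\tfrac{\bar C}{4\hat c}\|v\|_2^2$ survives. Second, the term $-\bar CN'(t)$ is never expanded at all: instead $\mathcal F(t)$ is bounded \emph{above} by $\tfrac\rho2(\|v\|_2^2+\|v_t\|_2^2)+\tfrac\mu2(\|p\|_2^2+\|p_t\|_2^2)-\tfrac{\hat c}{\bar C}\mathcal E(t)$, and $\mathcal F'(t)\ge\bar C\mathcal F(t)$ is obtained by comparing the lower bound for $\mathcal F'$ with $\bar C$ times this upper bound. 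The resulting requirements on $\bar C$ are the linear inequalities $\bar C\le\tfrac{\hat c+2\rho}{\rho}$, $\bar C\le\tfrac{\hat c+2\mu}{\mu}$ and $\bar C\bigl(\tfrac\rho2+\tfrac{1}{4\hat c}\bigr)\le\tfrac{\hat c-2}{2c^2M}$ (and the analogue with $\mu$), which are precisely the four entries of the minimum in \eqref{8168}. You need to redo your absorption along these lines; as written, the ``direct computation'' you postpone does not close with the stated constant.
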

\begin{proof}
The idea of this proof comes from  \cite{LT2023}. Recalling
\eqref{271513}, \eqref{261438}, \eqref{6-9},  using \eqref{231213}
and Young's inequality, one has {\small\begin{align}\label{8163}
&N''(t)=\rho\|v_t\|_2^2+\mu\|p_{t}\|_2^2-(\alpha_1\|\nabla v\|_2^2+\beta\|\gamma\nabla v-\nabla p\|_2^2)-\int_\Omega v_tvdx-\int_\Omega p_tpdx+\|v\|_{n_1+1}^{n_1+1}+\|p\|_{n_2+1}^{n_2+1}\notag\\
&\ge \frac{\hat{c}+2\rho}{2}\|v_t\|_2^2+\frac{\hat{c}+2\mu}{2}\|p_{t}\|_2^2-\int_\Omega v_tvdx-\int_\Omega p_tpdx+\Big(\frac{\hat{c}}{2}-1\Big)(\alpha_1\|\nabla v\|_2^2+\beta\|\gamma\nabla v-\nabla p\|_2^2)-\hat{c}\mathcal{E}(t)\notag\\
&\ge \frac{\hat{c}+2\rho}{2}\|v_t\|_2^2+\frac{\hat{c}+2\mu}{2}\|p_{t}\|_2^2+\frac{\hat{c}-2}{2}\Big(\max\Big\{\frac{2\gamma^2+1}{\alpha_1},\frac2\beta\Big\}\Big)^{-1}(\|\nabla v\|_2^2+\|\nabla p\|_2^2)\notag\\
&\quad-\frac{\bar{C}}{4\hat{c}}\|v\|_2^2-\frac
{\hat{c}}{\bar{C}}\|v_t\|_2^2-\frac{\bar{C}}{4\hat{c}}\|p\|_2^2-\frac
{\hat{c}}{\bar{C}}\|p_t\|_2^2-\hat{c}\mathcal{E}(t)
\notag\\
&\ge \frac{\hat{c}+2\rho}{2}\|v_t\|_2^2+\frac{\hat{c}+2\mu}{2}\|p_{t}\|_2^2+\Big[\frac{\hat{c}-2}{2}\Big(\max\Big\{\frac{2\gamma^2+1}{\alpha_1},\frac2\beta\Big\}\Big)^{-1}\frac{1}{c^2}-\frac{\bar{C}}{4\hat{c}}\Big](\|v\|_2^2+\|p\|_2^2)\notag\\
&\quad-\frac {\hat{c}}{\bar{C}}\|v_t\|_2^2-\frac
{\hat{c}}{\bar{C}}\|p_t\|_2^2-\hat{c}\mathcal{E}(t).
\end{align}}
Here we have used \eqref{12161346} and \eqref{2.1}.

Define
\[\mathcal{F}(t)=N'(t)-\frac{\hat{c}}{\bar{C}}\mathcal{E}(t)=\rho\int_\Omega v(t)v_t(t)dx+\mu\int_\Omega p(t)p_t(t)dx-\frac{\hat{c}}{\bar{C}}\mathcal{E}(t).\]
Combining \eqref{8163} with \eqref{231219} gives that
\begin{align}
\label{8164}
\frac{d}{dt}\mathcal{F}(t)&\ge\frac{\hat{c}+2\rho}{2}\|v_t(t)\|_2^2+\frac{\hat{c}+2\mu}{2}\|p_{t}(t)\|_2^2\notag\\
&\quad+\Big[\frac{\hat{c}-2}{2}\Big(\max\Big\{\frac{2\gamma^2+1}{\alpha_1},\frac2\beta\Big\}\Big)^{-1}\frac{1}{c^2}-\frac{\bar{C}}{4\hat{c}}\Big](\|v\|_2^2+\|p\|_2^2)-\hat{c}\mathcal{E}(t)\notag\\
&\ge
\bar{C}\Big(\frac\rho2\|v_t\|_2^2+\frac\rho2\|v\|_2^2+\frac\mu2\|p_t\|_2^2+\frac\mu2\|p\|_2^2-\frac
{\hat{c}}{\bar{C}}\mathcal{E}(t)\Big)\ge {\bar{C}} \mathcal{F}(t)
\end{align}
according to \eqref{8168}. Condition \eqref{8165} admits
\[\mathcal{F}(0)=\rho\int_\Omega v_0v_1dx+\mu\int_\Omega p_0p_1dx-\frac{\hat{c}}{\bar{C}}\mathcal{E}(0)>0.\]
Exploiting Gronwall's inequality for \eqref{8164}, we obviously
obtain
\[\mathcal{F}(t)\ge e^{\bar{C}t}\mathcal{F}(0)>0,\]
that is \eqref{8167}.
\end{proof}

\begin{proofth210}
Let us first prove the finite time blow-up for high initial energy
by contradiction. Assume that the solution $(v,p)$ for problem
\eqref{1.1} is global.

One side,  H\"{o}lder's inequality and \eqref{231218}
directly give  that
\begin{align}\label{8169}
    &\|v(t)\|_2=\Big\|v_0+\int_0^tv_t(s) ds\Big\|_2\leq \|v_0\|_2+\int_0^t\|v_t(t)\|_2 ds\notag\\
    &\quad\leq\|v_0\|_2+\sqrt{t}\Big(\int_0^t\|v_t(s)\|_2^2 ds\Big)^{\frac 12}\leq\|v_0\|_2+\sqrt{t}(\mathcal{E}(0)-\mathcal{E}(t))^{\frac 12},
\end{align}
for all $t\in [0,\infty)$.  Since $(v,p)$ is a global solution of
problem \eqref{1.1}, one obtains $\mathcal{E}(t)\geq 0$ for all
$t\in [0,\infty)$. Otherwise, there exists $t_0\in [0,\infty)$ such
that $\mathcal{E}(t_0)<0$. Let us choose
$(v(t_0),p(t_0))$  as the new initial data, then $(v,p)$ blows up
in finite time by Theorem \ref{thmb1}, which is a contradiction.
Further, \eqref{8169} can be rewritten as
\begin{equation*}\label{81610}
    \|v\|_2\leq\|v_0\|_2+\sqrt{t}(\mathcal{E}(0))^{\frac 12}
\end{equation*}
for all $t\in [0,\infty)$. Likewise, $
\|p\|_2\leq\|p_0\|_2+\sqrt{t}(\mathcal{E}(0))^{\frac 12}$. Hence,
one gives
\begin{equation}
\label{271600}
\|v\|_2+\|p\|_2\leq\|v_0(x)\|_2+\|p_0\|_2+2\sqrt{t}(\mathcal{E}(0))^{\frac
12}.
\end{equation}

On the other side, let us recall \eqref{8167}, then
\begin{align}
\label{81611}
\frac{d}{dt}(\rho\|v\|^2_2+\mu\|p\|^2_2)&=2\Big(\rho\int_\Omega vv_tdx+\mu\int_\Omega pp_tdx\Big)\notag\\
&\ge
2\mathcal{F}(0)e^{\bar{C}t}+\frac{2\hat{c}}{\mathcal{\bar{C}}}\mathcal{E}(t)\ge
2\mathcal{F}(0)e^{\bar{C}t}>0.
\end{align}
Integrating \eqref{81611} from $0$ to $t$ arrives at
\begin{align*}
 & \|v\|_2^2+\|p\|_2^2=\|v_0\|_2^2+\|p_0\|_2^2+2\int_0^t\Big(\rho\int_\Omega v(s)v_t(s)dx+\mu\int_\Omega p(s)p_t(s)dx\Big)ds\notag\\
  &\geq\|v_0\|_2^2+\|p_0\|_2^2+2\int_0^te^{\bar{C}\tau}\mathcal{F}(0)d\tau
  =\|v_0\|_2^2+\|p_0\|_2^2+\frac{2}{\bar{C}}(e^{\bar{C}t}-1)\mathcal{F}(0),
\end{align*}
 which contradicts \eqref{271600} for $t$ sufficiently large. Therefore, the solution $(v,p)$ for problem \eqref{1.1} blows up in finite time.

 One remains to discuss the upper bound of the blow-up time.  Without loss of generality, we may assume $\mathcal{E}(t)\ge 0$ for $0<t<T^*$. For any $T^*\in (0,T_{max})$, define the auxiliary function $M(t)$ by
\begin{align*}
M(t)=\rho\|v\|_2^2+\mu\|p\|_2^2+\int_0^t(\|v\|_2^2+\|p\|_2^2)d\tau+(T_{max}-t)(\|v_0\|_2^2+\|p_0\|_2^2)+\kappa(t+\tau)^2
\end{align*}
for $t\in[0,T^*].$

It follows from  a direct computation that
 \begin{equation*}
\begin{split}
M'(t)&=2\Big(\rho\int_\Omega vv_tdx+\mu\int_\Omega pp_tdx\Big)+(\|v\|_2^2+\|p\|_2^2)-(\|v_0\|_2^2+\|p_0\|_2^2)+2\kappa(t+\sigma)\\
&=2N'(t)+2\int_0^t\int_\Omega
(v(s)v_t(s)dx+p(s)p_t(s))dxds+2\kappa(t+\tau)\quad\text{for
}t\in[0,T^*].
\end{split}
\end{equation*}
Recall \eqref{6-9}, then
 \begin{equation*}
\begin{split}
M''(t)&=2(\rho\|v_t\|_2^2+\mu\|p_{t}\|_2^2)-2(\alpha_1\|\nabla
v\|_2^2+\beta\|\gamma\nabla v-\nabla
p\|_2^2)+2(\|v\|_{n_1+1}^{n_1+1}+\|p\|_{n_2+1}^{n_2+1})+2\kappa
\end{split}
\end{equation*}
for $t\in[0,T^*]$.
 Therefore,
 \begin{align}
\label{3.7}
&M(t)M''(t)-\frac{\hat{c}+2}{4}(M'(t))^2\notag\\
&\quad=M(t)M''(t)-\frac{\hat{c}+2}{4}\Big[2N'(t)+2\int_0^t\int_\Omega (v(s)v_t(s)dx+p(s)p_t(s))dxds+2\kappa(t+\tau)\Big]^2\notag\\
&\quad=M(t)M''(t)-(\hat{c}+2)M(t)\Big(\rho\|v_t\|_2^2+\mu\|p_{t}\|_2^2+\int_0^t(\|v_t\|_2^2+\|p_{t}\|_2^2)d\tau+\kappa\Big)+(\hat{c}+2)\xi(t)\notag\\
&\quad\quad+(\hat{c}+2)(T^*-t)(\|v_0\|_2^2+\|p_0\|_2^2)\Big(\rho\|v_t\|_2^2+\mu\|p_{t}\|_2^2+\int_0^t(\|v_t\|_2^2+\|p_{t}\|_2^2)d\tau+\kappa\Big)\notag\\
&\quad \ge M(t)\eta(t)\quad\text{for }t\in[0,T^*],
\end{align}
 where
 \begin{equation*}
\begin{split}
\eta(t)&=M''(t)-(\hat{c}+2)\Big(\rho\|v_t\|_2^2+\mu\|p_{t}\|_2^2+\int_0^t(\|v_t\|_2^2+\|p_{t}\|_2^2)d\tau+\kappa\Big)\\
&=-\hat{c}(\rho\|v_t\|_2^2+\mu\|p_{t}\|_2^2)-2(\alpha_1\|\nabla v\|_2^2+\beta\|\gamma\nabla v-\nabla p\|_2^2)+2(\|v\|_{n_1+1}^{n_1+1}+\|p\|_{n_2+1}^{n_2+1})\\
&\quad-(\hat{c}+2)\int_0^t(\|v_t\|_2^2+\|p_{t}\|_2^2)d\tau-\hat{c}\kappa,
\end{split}
\end{equation*}
and we have applied Cauchy-Schwarz inequality and Young's inequality
to obtain
 {\small\begin{align*}
&\xi(t):=\Big[\rho\|v\|_2^2+\mu\|p\|_2^2+\int_0^t(\|v\|_2^2+\|p\|_2^2)d\tau+\kappa(t+\tau)^2\Big]\\
&\quad\times
\Big[\rho\|v_t\|_2^2+\mu\|p_{t}\|_2^2+\int_0^t(\|v_t\|_2^2+\|p_{t}\|_2^2)d\tau+\kappa\Big]\\
&\quad-\Big[\rho\int_\Omega vv_tdx+\mu\int_\Omega pp_tdx+\int_0^t\int_\Omega (v(s)v_t(s)dx+p(s)p_t(s))dxds+\kappa(t+\tau)\Big]^2\\
&\ge
\Big[\rho\|v\|_2^2+\mu\|p\|_2^2+\int_0^t(\|v\|_2^2+\|p\|_2^2)d\tau+\kappa(t+\tau)^2\Big]
\Big[\rho\|v_t\|_2^2+\mu\|p_{t}\|_2^2+\int_0^t(\|v_t\|_2^2+\|p_{t}\|_2^2)d\tau+\kappa\Big]\\
&\quad-\Big[\sqrt{\rho}\|v\|_2\sqrt{\rho}\|v_t\|_2+\sqrt{\mu}\|p\|_2\sqrt{\mu}\|p_t\|_2\\
&\qquad+\Big(\int_0^t\|v\|_2^2ds\Big)^{\frac12}\Big(\int_0^t\|v_t\|_2^2ds\Big)^{\frac12}+\Big(\int_0^t\|p\|_2^2ds\Big)^{\frac12}\Big(\int_0^t\|p_t\|_2^2ds\Big)^{\frac12}+\kappa(t+\tau)\Big]^2\\
&\ge[\rho\|v\|_2^2\mu\|p_{t}\|_2^2+\mu\|p\|_2^2\rho\|v_t\|_2^2-2\rho\|v\|_2\|v_t\|_2\mu\|p\|_2\|p_t\|_2]\\
&+\Big[\rho\|v\|_2^2\int_0^t\|v_t(s)\|_2^2ds+\rho\|v_t\|_2^2\int_0^t\|v(s)\|_2^2ds-2\rho\|v\|_2^2\Big(\int_0^t\|v_t(s)\|_2^2ds\Big)^{\frac12}\|v_t\|_2^2\Big(\int_0^t\|v(s)\|_2^2ds\Big)^{\frac12}\Big]\\
&+\Big[\rho\|v\|_2^2\int_0^t\|p_t(s)\|_2^2ds+\rho\|v_t\|_2^2\int_0^t\|p(s)\|_2^2ds-2\rho\|v\|_2^2\Big(\int_0^t\|p_t(s)\|_2^2ds\Big)^{\frac12}\|v_t\|_2^2\Big(\int_0^t\|p(s)\|_2^2ds\Big)^{\frac12}\Big]\\
&+[\kappa\rho\|v\|_2^2+\rho\|v_t\|_2^2\kappa(t+\sigma)^2-2\rho\|v\|_2\|v_t\|_2^2\kappa(t+\tau)]\\
&+\Big[\mu\|p\|_2^2\int_0^t\|v_t(s)\|_2^2ds+\mu\|p_t\|_2^2\int_0^t\|v(s)\|_2^2ds-2\mu\|p\|_2^2\Big(\int_0^t\|v_t(s)\|_2^2ds\Big)^{\frac12}\|p_t\|_2^2\Big(\int_0^t\|v(s)\|_2^2ds\Big)^{\frac12}\Big]\\
&+\Big[\mu\|p\|_2^2\int_0^t\|p_t(s)\|_2^2ds+\mu\|p_t\|_2^2\int_0^t\|p(s)\|_2^2ds-2\mu\|p\|_2^2\Big(\int_0^t\|p_t(s)\|_2^2ds\Big)^{\frac12}\|p_t\|_2^2\Big(\int_0^t\|p(s)\|_2^2ds\Big)^{\frac12}\Big]\\
&+[\kappa\mu\|p\|_2^2+\mu\|p_t\|_2^2\kappa(t+\sigma)^2-2\mu\|p\|_2\|p_t\|_2^2\kappa(t+\tau)]\\
&+\Big[\int_0^t\|v(s)\|_2^2ds\int_0^t\|p_t(s)\|_2^2ds+\int_0^t\|v_t(s)\|_2^2ds\int_0^t\|p(s)\|_2^2ds\\
&\quad-2\Big(\int_0^t\|v(s)\|_2^2ds\int_0^t\|p_t(s)\|_2^2ds\int_0^t\|v_t(s)\|_2^2ds\int_0^t\|p(s)\|_2^2ds\Big)^{\frac12}\Big]\\
&+\Big[\kappa\int_0^t\|p(s)\|_2^2ds+\kappa(t+\tau)^2\int_0^t\|p_t(s)\|_2^2ds-2\kappa(t+\tau)\Big(\int_0^t\|p(s)\|_2^2ds\int_0^t\|p_t(s)\|_2^2ds\Big)^{\frac12}\Big]\\
&+\Big[\kappa\int_0^t\|v(s)\|_2^2ds+\kappa(t+\tau)^2\int_0^t\|v_t(s)\|_2^2ds-2\kappa(t+\tau)\Big(\int_0^t\|v(s)\|_2^2ds\int_0^t\|v_t(s)\|_2^2ds\Big)^{\frac12}\Big]\ge
0.
\end{align*}}
It follows from  \eqref{231218}, \eqref{231213},\eqref{2.1} and
\eqref{12161346} and $\|v_0\|_2^2+\|p_0\|_2^2\le
\|v\|_2^2+\|p\|_2^2$ implied by  \eqref{81611}, then
 \begin{align}
\label{81615}
&\eta(t)\ge -\hat{c} \Big[2\mathcal{E}(t)-(\alpha_1\|\nabla v\|_2^2+\beta\|\gamma\nabla v-\nabla p\|_2^2)+\frac{2}{n_1+1}\|v\|_{n_1+1}^{n_1+1}+\frac{2}{n_2+1}\|p\|_{n_2+1}^{n_2+1}\Big]-\hat{c}\kappa\notag\\
&\quad-2(\alpha_1\|\nabla v\|_2^2+\beta\|\gamma\nabla v-\nabla p\|_2^2)+2(\|v\|_{n_1+1}^{n_1+1}+\|p\|_{n_2+1}^{n_2+1})-(\hat{c}+2)\int_0^t(\|v_t\|_2^2+\|p_{t}\|_2^2)d\tau\notag\\
&\ge -2\hat{c}\mathcal{E}(0)+(\hat{c}-2)(\alpha_1\|\nabla v\|_2^2+\beta\|\gamma\nabla v-\nabla p\|_2^2)+(\hat{c}-2)\int_0^t(\|v_t\|_2^2+\|p_{t}\|_2^2)d\tau-\hat{c}\kappa\notag\\
&\ge
-2\hat{c}\mathcal{E}(0)+(\hat{c}-2)\max\Big\{\frac{2\gamma^2+1}{\alpha_1},\frac2\beta\Big\}^{-1}c^2\min\Big\{\frac1\rho,\frac1\mu\Big\}(\|v_0\|_2^2+\|p_0\|_2^2)-\hat{c}\kappa=0.
\end{align}
 Here we have choosen
\begin{equation}
\label{81616} \kappa=
\frac{1}{\hat{c}}\Big\{-2\hat{c}\mathcal{E}(0)+(\hat{c}-2)\max\Big\{\frac{2\gamma^2+1}{\alpha_1},\frac2\beta\Big\}^{-1}c^2\min\Big\{\frac1\rho,\frac1\mu\Big\}(\|v_0\|_2^2+\|p_0\|_2^2)\Big\}>0.
\end{equation}
Therefore, let us combine \eqref{3.7} with \eqref{81615}, then
\begin{align*}
&M(t)M''(t)-\frac{\hat{c}+2}{4}(M'(t))^2\ge 0\quad\text{for
}t\in[0,T^*].
\end{align*}

Notice that
\[M(0)=\rho\|v_0\|_2^2+\mu\|p_0\|_2^2+T_{max}(\|v_0\|_2^2+\|p_0\|_2^2)+\kappa\tau^2>0.\]
Let us  choose
\begin{equation}
\label{8160}
\tau>\max\Big\{0,\frac{2(\|v_0\|_2^2+\|p_0\|_2^2)-(\hat{c}-2)\Big(\rho\int_\Omega
v_0v_1dx+\mu\int_\Omega p_0p_1dx\Big)}{(\hat{c}-2)\kappa}\Big\}
\end{equation}
to assure
\[M'(0)=2\Big(\rho\int_\Omega v_0v_1dx+\mu\int_\Omega p_0p_1dx\Big)+2\kappa\tau>\frac{4(\|v_0\|_2^2+\|p_0\|_2^2)}{\hat{c}-2}>0,\]
then Lemma \ref{lem2.5} yields that $M(t)\to\infty$ as $t\to T^*$
with
\[T^*\le \frac{4M(0)}{(\hat{c}-2)M'(0)}=\frac{2(\rho\|v_0\|_2^2+\mu\|p_0\|_2^2+T_{max}(\|v_0\|_2^2+\|p_0\|_2^2)+\kappa\tau^2)}{(\hat{c}-2)\Big[\Big(\rho\int_\Omega v_0v_1dx+\mu\int_\Omega p_0p_1dx\Big)+\kappa\tau\Big]}.\]
It follows from \eqref{8160} that the upper bound of $T_{max}$ is
derived.
\end{proofth210}

\section*{Acknowledgments}
The authors are  highly grateful to the  referee for their valuable
comments.

\def\cprime{$'$}

\end{document}